\newcommand{\blue}{}
\newcommand{\red}{}
\newcommand{\blu}{}
\newcommand{\rosso}{}
\newcommand{\orange}{}
\newcommand{\equlaw}{\stackrel{(d)}{=}}
  \newcommand{\coef}{B_{n}(f)}
  \newcommand{\coeff}{B'_{n}(f)}
\theoremstyle{definition}
\newtheorem{definition}{Definition}
\newtheorem{remark}[definition]{Remark}
\newtheorem{example}[definition]{Example}
\newtheoremstyle{mytheorem}{0.5cm}{0.2cm}{\slshape}{ }{\bfseries}{.}{ }{}
\theoremstyle{mytheorem}
\newtheorem{theorem}[definition]{Theorem}
\newtheorem{lemma}[definition]{Lemma}
\newtheorem{proposition}[definition]{Proposition}
\newtheorem{corollary}[definition]{Corollary}
\newtheorem{remarks}{Remarks}
\newcommand{\one}{\mathbf{1}}
\renewcommand{\P}{\mathbf{P}}
\newcommand{\E}{\mathbf{E}}
\newcommand{\Vol}{\text{Vol}}
\newcommand{\Per}{\text{Per}}
\newcommand{\var}{{\bf Var}}
\newlength{\querylen}
\numberwithin{equation}{section}
\numberwithin{definition}{section}
\title{}
\begin{document}

\begin{center}
\large {\bf NEW KOLMOGOROV BOUNDS FOR FUNCTIONALS }\\
\large{\bf OF BINOMIAL POINT PROCESSES\footnote{This research has been supported by the grant F1R-MTH-PUL-12PAMP (PAMPAS) at Luxembourg University}}

 \medskip

\medskip

\medskip

\normalsize
by Rapha\" el Lachi\`eze-Rey\footnote{Laboratoire MAP5 
UniversitŽ\'e Paris Descartes, Sorbonne Paris Cit\'e, Paris. Email: raphael.lachieze-rey@parisdescartes.fr.} and Giovanni Peccati\footnote{Unit\'e de Recherche en Math\'ematiques, Universit\'e du Luxembourg, Luxembourg. Email: giovanni.peccati@gmail.com. }

\end{center}

{\small \noindent {\bf Abstract}: We obtain explicit Berry-Esseen bounds in the Kolmogorov distance for the normal approximation of non-linear functionals of vectors of independent random variables. Our results are based on the use of Stein's method and of random difference operators, and generalise the bounds recently obtained by Chatterjee (2008), concerning normal approximations in the Wasserstein distance. In order to obtain lower bounds for variances, we also revisit the classical Hoeffding decompositions, for which we provide a new proof and a new representation. Several applications are discussed in detail: in particular, new Berry-Esseen bounds are obtained for set approximations with random tessellations, as well as for functionals of covering processes.  \\
\noindent {\bf Key words}: Berry-Esseen Bounds; Binomial Processes; Covering Processes; Random Tessellations; Stochastic Geometry;  Stein's method. \\
\noindent {\bf 2010 MSC}: 60F05, 60D05 
\\

\section{Introduction}

\subsection{Overview}

Let $X = (X_1,...,X_n)$ be a collection of independent random variables, defined on some probability space $(\Omega, \mathscr{F}, \P)$ and taking values in some Polish space $(E,\mathscr{E})$; let $f :E^n \to \mathbb{R}$ be a measurable function such that $f(X)$ is square-integrable. The aim of the present paper is to deduce a new class of explicit upper bounds for the {\it Kolmogorov distance} $d_{K}(f(X), N)$, between the distribution of $f(X)$ and that of a Gaussian random variable $N\sim \mathscr{N}(m, \sigma^2)$ such that $m = {\bf E}f(X)$ and $\sigma^2 = {\bf Var} f(X)$. Recall that $d_{K}(f(X), N)$ is defined as:
$$
d_{K}(f(X), N) = \sup_{t\in \mathbb{R}} \left| {\bf P}[f(X)\leqslant t] - {\bf P}[N\leqslant t]  \right|.
$$
The problem of obtaining explicit estimates on the distance between the distributions of $f(X)$ and $N$ has been recently dealt with in the paper $\cite{Cha08}$, where the author was able to apply a standard version of Stein's method (see e.g. \cite{CGS2011}) in order to deduce effective upper bounds on the {\it Wasserstein distance}
$$
d_{W}(f(X), N) = \sup_{h} \left| {\bf E}[h (f(X)) ] - {\bf E}[h(N)]  \right|,
$$where the supremum runs over $1$-Lipschitz functions,
by using a class of difference operators that we shall explicitly describe in Section \ref{ss:difference} below (see e.g. \cite{Cha13, hi, Nol} for some relevant applications of these bounds). 

\medskip

It is a well known fact that upper bounds on $d_{W}(f(X), N)$ also yield a (typically suboptimal) bound on $d_{K}(f(X), N)$ via the standard relation $d_{K}(f(X), N)\leqslant 2 \sqrt{d_{W}(f(X), N)}$. The challenge we are setting ourselves in the present paper is to deduce upper bounds on $d_{K}(f(X), N)$ that are {\it {potentially} of the same order} as the bounds on $d_{W}(f(X), N)$ that can be deduced from \cite{Cha08}. {\blu Our main abstract findings appear in the statement of Theorem \ref{thm:SchulteSchatterjeebound} below}. In order {\blu to prove our main bounds}, we shall exploit some novel estimates on the solution of the Stein's equations associated with the Kolmogorov distance, that are strongly inspired by computations developed in \cite{EicTha14, Sch12} in the framework of normal approximations for functionals of Poisson random measures.

\medskip

Another important contribution of the present work (ses Section \ref{ss:hoeffding}) is a novel representation (in terms of difference operators) of the kernels determining the {\it Hoeffding decomposition} (see e.g. \cite{hoeffding, peccati, vitale}, as well as \cite[Chapter 5]{serfling}) of a random variable of the type $f(X)$. This new representation is put into use for deducing effective lower bounds on ${\bf Var} f(X)$.

\medskip

As demonstrated in the sections to follow, we are mainly interested by geometric applications and, in particular, by the normal approximation of geometric functionals whose dependency structure can be {assessed} by using { second order difference operators}. One of the applications developed in detail in Section \ref{sec:voronoi} is that of {\it Voronoi set approximations}, where a given set $K$ is estimated by the union of Voronoi cells. Remarkably, our bounds allow one to deduce normal approximation bounds for the volume approximation of sets $K$ having a highly non-regular boundary. The present paper is associated with the work \cite{LacVeg}, where it is proved that, for a large class of sets with self-similar boundary of dimension $s>d-1$, the variance of the volume approximation is asymptotically {\blu of the same order as} $n^{-2+s/d}$ and the Kolmogorov distance between the volume approximation and the normal law is smaller than {\blu some multiple of} $n^{- s/2d }$ multiplied by a logarithmic term. It turns out that the crucial feature for a set to be well behaved with respect to Voronoi approximation is its density at the boundary, which is mathematically independent of its fractal dimension (see \cite{LacVeg} for an in-depth discussion of these phenomena). For illustrative purposes, we will also present an application of our methods to covering processes (re-obtaining the results of \cite{GolPen10} in a slightly more general framework, see Section \ref{sec:covering} below), as well as to some models already studied in \cite{Cha08} and \cite{Nol}.

\smallskip

{ In the recent reference \cite{gn}, Gloria and Nolen have effectively used Theorem \ref{thm:SchulteSchatterjeebound} below for deducing Berry-Esseen bounds in the Kolmogorov distance for the effective conductance on the discrete torus.}

\subsection{Plan}

Section 2 contains our main results concerning decompositions of random variables. Section 3 deals with some estimates associated with Stein's method, and Section 4 contains our main abstract findings. Section 5 focusses on estimates based on   {  second order difference operators}. Finally, several applications are developed in Section 6.

\medskip

From now on, every random object is defined on an adequate common probability space $(\Omega, \mathscr{F}, \P)$, with $\E$ denoting expectation with respect to $\P$.

\section{Decomposing random variables}
 
\subsection{\blue Some difference operators}\label{ss:difference}

Let $(E,\mathscr{E})$ be a Polish space endowed with its Borel $\sigma$-field.  Given two vectors $y =(y_1,...,y_n)\in E^n$ and $y' = (y'_1,...,y'_n)\in E^n$, for every $C\subseteq [n]{\blue :=\{1,...,n\}}$ and every measurable function $f : E^n \to \mathbb{R}$, we {\blue denote by} $f^C(y,y')$ the quantity that is obtained from $f(y)$ by replacing $y_i$ with $y'_i$ whenever $i\in C$. For instance, if $n=4$ and $C= \{1,4\}$, then
\[
f^C(y,y') = f(y'_1,y_2,y_3,y'_4)
\]
and 
\[
f^C(y',y) = f(y_1,y'_2,y'_3,y_4).
\]
Given $C\subseteq [n]$, we introduce the operator
\[
\Delta_C f(y,y') = f(y) -f^C(y,y').
\]
When $C=\{j\}$ (to simplify the notation), we shall often write $f^{\{j\}} = f^j$ and $\Delta_{\{j\}}= \Delta_j$, for $j=1,...,n$, in such a way that
\[
\Delta_{\{j\}}f(y,y') = \Delta_j f(y,y') =f(y) - f^j(y,y')=  f(y) - f(y_1,...,y_{j-1},y'_j, y_{j+1},...,y_n), 
\]
and 
\[
\Delta_{\{j\}}f(y',y) = \Delta_j f(y',y) = f(y') - f^{j}(y',y)= f(y') - f(y'_1,...,y'_{j-1},y_j, y'_{j+1},...,y'_n).
\]
{\blue We can canonically iterate} the operator $\Delta _{j}$ as follows: {\blue for every $k\geq 2$ and every choice of distinct indices $1\leqslant i_1< \cdots<i_k \leqslant n$, the quantity $\Delta_{i_1}\cdots \Delta_{i_k}f(y,y')$, is defined as
\[
\Delta_{i_1}\cdots \Delta_{i_{k-1}}f(y,y') - (\Delta_{i_1}\cdots \Delta_{i_{k-1}}f(y,y') )_{i_k},
\]
where $(\Delta_{i_1}\cdots \Delta_{i_{k-1}}f(y,y') )_{i_k}$ is obtained by replacing $y_{i_k}$ with $y'_{i_k}$ inside the argument of $$\Delta_{i_1}\cdots \Delta_{i_{k-1}}f(y,y').$$ Note that {\blue the operator $\Delta_{i_1}\cdots \Delta_{i_k}$} defined in this way is invariant with respect to permutations of the indices $i_1,...,i_k$.} For instance, if $n=2$,
\begin{eqnarray*}
\Delta_1\Delta_2 f(y,y') &=&\Delta_2\Delta_1 f(y,y') \\
&=& f(y'_1,y'_2)-f(y'_1,y_2) - f(y_1,y'_2)+f(y_1,y_2).
\end{eqnarray*}
{\blue The notation introduced above} also extends to random variables: if $X = (X_1,...,X_n)$ and $X' = (X'_1,...,X'_n)$ are two random vectors with values in $E^n$, then we write
\[
\Delta_C f(X,X') := f(X) - f^C(X,X'),\quad C\subseteq[n],
\]
and define $\Delta_{i_1}\cdots \Delta_{i_k}f(X,X')$, $1\leqslant i_1< \cdots<i_k \leqslant n$, exactly as above. The definitions of $\Delta_C f(X',X)$ and $\Delta_{i_1}\cdots \Delta_{i_k}f(X',X)$ are given analogously. Now assume that $E[| f(X)|]<\infty$. Our aim in this section is to discuss two representations of the quantity $f(X) - E[f(X)]$, that are based on the use of the difference operators $\Delta_j$. The first one is a reformulation of the classical {\it Hoeffding decomposition} for functions of independent random variables (see e.g. \cite{hoeffding, peccati, vitale}, as well as \cite[Chapter 5]{serfling}). The second one comes from \cite{Cha08} (see also \cite[Chapter 7]{Cha13}) and will play an important role in the derivation of our main estimates.

\subsection{A new look at Hoeffding decompositions}\label{ss:hoeffding}

{\blue Throughout this section}, for every fixed integer $n\geq 1$ we write $X = (X_1,...,X_n)$ to indicate a vector of independent random variables with values in {\blue the Polish space} $E$, and let $X' = (X'_1,...,X'_n)$ be an independent copy of $X$. If $f: E^n \to \mathbb{R}$ is a measurable function such that $\E[f(X)^2]<\infty$, then the classical theory of Hoeffding decompositions for functions of independent random variables (see e.g. \cite{KR, vitale}) implies that $f(X)$ admits a unique decomposition of the type
\begin{equation}\label{e:ch} 
f(X) = \E[f(X)] +\sum_{k=1}^n \sum_{1\leqslant i_1<\cdots <i_k\leqslant n} \varphi_{i_1,...,i_k} (X_{i_1},...,X_{i_k}),
\end{equation}
where the square-integrable kernels $\varphi_{i_1,...,i_k}$ verify the degeneracy condition
\[
\E[ \varphi_{i_1,...,i_k} (X_{i_1},...,X_{i_k})\, |\, X_{j_1},...,X_{j_a}]=0,
\]
for any strict subset $\{j_1,...,j_a\}$ of $\{i_1,...,i_k\}$. The derivation of \eqref{e:ch} is customarily based on some implicit recursive application of the inclusion-exclusion principle, and the kernels $\varphi_{i_1,...,i_k}$ can be represented as linear combinations of conditional expectations. As abundantly illustrated in the above-mentioned references, a representation such as \eqref{e:ch} is extremely useful for analysing the variance of a wide range of random variables (in particular, $U$-statistics). Our aim in the present section is to  point out a very compact way of writing the decomposition \eqref{e:ch}, that is based on the use of the operators $\Delta_j$ introduced above. Albeit not surprising, such an approach towards Hoeffding decompositions seems to be new {\blue and of independent interest}, and will be quite useful in the present paper for explicitly deriving lower bounds on variances. Our starting point is the following statement, where we make use of the notation introduced in Section \ref{ss:difference}.

\begin{lemma}\label{l:1} For every $f : {\blue E}^n \to \mathbb{R}$
\begin{equation}\label{e:a}
f(y) - f(y') =  \sum_{k=1}^n \sum_{1\leqslant i_1<\cdots < i_k\leqslant n} (-1)^k \Delta_{i_1}\cdots \Delta_{i_k}f(y',y).
\end{equation}
\end{lemma}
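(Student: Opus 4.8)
The plan is to prove the identity \eqref{e:a} by a telescoping/inclusion-exclusion argument over the index set $[n]$, reducing everything to a single combinatorial identity. First, observe that the difference operators $\Delta_{i_1}\cdots\Delta_{i_k}$ can be expanded explicitly: iterating the definition, for a fixed set $C=\{i_1,\dots,i_k\}$ one has
\[
\Delta_{i_1}\cdots\Delta_{i_k}f(y',y) = \sum_{D\subseteq C} (-1)^{|C|-|D|} f\big((y',y)\text{ with coordinates in }D\text{ set to }y\big),
\]
where starting from the base point $y'$ (i.e.\ $f(y')$) each application of a $\Delta$ introduces a signed pair, one term keeping the current coordinate and one switching it to $y$. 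Concretely, writing $g(D)$ for the value of $f$ evaluated at the vector whose $i$-th coordinate equals $y_i$ for $i\in D$ and $y'_i$ for $i\notin D$, we get $\Delta_{i_1}\cdots\Delta_{i_k}f(y',y) = \sum_{D\subseteq C}(-1)^{|C|-|D|} g(D)$. I would verify this expansion by induction on $k$, which is the routine part.

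Next, substitute this expansion into the right-hand side of \eqref{e:a}. The sum over all nonempty $C\subseteq[n]$ of $(-1)^{|C|}\Delta_{i_1}\cdots\Delta_{i_k}f(y',y)$ becomes
\[
\sum_{\emptyset\ne C\subseteq[n]} (-1)^{|C|}\sum_{D\subseteq C}(-1)^{|C|-|D|} g(D) = \sum_{D\subseteq[n]} (-1)^{|D|} g(D)\sum_{\substack{C\supseteq D\\ C\ne\emptyset}} 1 \cdot(-1)^{2|C|},
\]
after switching the order of summation. Here for each fixed $D$, the inner sum runs over all $C$ with $D\subseteq C\subseteq[n]$ and $C\neq\emptyset$, and the sign $(-1)^{|C|}(-1)^{|C|-|D|}=(-1)^{|D|}$ no longer depends on $C$. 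So I need to count, for each $D$, the number of such $C$: if $D\neq\emptyset$ this is $2^{n-|D|}$; if $D=\emptyset$ it is $2^n-1$. Hence the right-hand side of \eqref{e:a} equals $\sum_{D\neq\emptyset}(-1)^{|D|}2^{n-|D|}g(D) + (2^n-1)g(\emptyset)$.

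At this point the identity does not reduce to $f(y)-f(y')=g([n])-g(\emptyset)$ term-by-term, so a cleaner bookkeeping is needed. I would instead group the expansion differently: after plugging in, reorganize the full double sum by the value $g(D)$ and track the coefficient of $g(D)$ directly, being careful that the sign carried by $\Delta_{i_1}\cdots\Delta_{i_k}$ in \eqref{e:a} is $(-1)^{|C|}$ while the internal sign from the expansion is $(-1)^{|C\setminus D|}$, so the product is $(-1)^{|D|}$ times $(-1)^{2|C\setminus D|\cdot 0}$... in other words I should first double-check the precise sign convention in the iterated definition (the base point is $f(y')$, and each $\Delta$ subtracts the switched term), then collect. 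The coefficient of $g(D)$ in the RHS of \eqref{e:a} will be $(-1)^{|D|}\sum_{C:\,D\subseteq C\subseteq[n]}(-1)^{|C|-|D|}$ when one keeps the sign attached correctly; this inner alternating sum over an interval of the Boolean lattice vanishes unless $D=[n]$. A symmetric argument isolating the terms anchored at $g(\emptyset)$ handles the constant, giving coefficient $1$ for $g([n])=f(y)$ and $-1$ for $g(\emptyset)=f(y')$, all other coefficients cancelling.

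The main obstacle is therefore purely bookkeeping: getting the sign conventions in the iterated operator exactly right and then recognizing that the resulting coefficient of each intermediate $g(D)$ is an alternating sum $\sum_{j=0}^{m}\binom{m}{j}(-1)^j$ over the chain $[D,[n]]$, which equals $0$ precisely when $D\subsetneq[n]$. Once that cancellation is in place, \eqref{e:a} follows immediately. An alternative, perhaps slicker, route is induction on $n$: split off the index $n$, write $\Delta_n f(y',y)$ explicitly, and apply the $(n-1)$-variable identity to both $f(y'_1,\dots,y'_{n-1},\cdot)$ evaluated at $y_n$ and at $y'_n$; I would keep this as a fallback if the direct expansion gets notationally heavy, since the inductive step reduces exactly to the one-variable case $f(y)-f(y')=-\Delta_1 f(y',y)$ plus the combinatorics of how $\Delta_n$ interacts with the lower-order operators.
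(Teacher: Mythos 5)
Your approach is exactly the one the paper uses: expand the iterated difference operator as a signed sum over subsets, swap the order of summation, and show the coefficient of each $g(D)$ vanishes except for $D=[n]$ (coefficient $1$) and $D=\emptyset$ (coefficient $-1$). The obstruction you ran into, however, is not ``bookkeeping'' but a sign error in the preliminary expansion. The correct identity, obtained by induction on $k$, is
\[
\Delta_{i_1}\cdots\Delta_{i_k}f(y',y) \;=\; \sum_{D\subseteq C}(-1)^{|D|}\,g(D),
\]
not $\sum_{D\subseteq C}(-1)^{|C|-|D|}g(D)$. (Test $k=1$: $\Delta_j f(y',y)=f(y')-f^{j}(y',y)=g(\emptyset)-g(\{j\})$, which is $(-1)^{|D|}$; your version gives $-g(\emptyset)+g(\{j\})$.) The two differ by an overall factor $(-1)^{|C|}$, so after multiplying by the external $(-1)^{|C|}$ from \eqref{e:a} your combined sign became $(-1)^{|D|}$, independent of $C$, and the inner sum degenerated to a raw count $2^{n-|D|}$ rather than an alternating sum. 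That is why nothing cancelled.

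With the correct internal sign the combined sign is $(-1)^{|C|+|D|}=(-1)^{|C|-|D|}$, and after swapping the order of summation the coefficient of $g(D)$ is
\[
\sum_{\substack{C:\;D\subseteq C\subseteq[n] \\ C\neq\emptyset}} (-1)^{|C|-|D|},
\]
with no $(-1)^{|D|}$ prefactor (it is absorbed) and, crucially, with the constraint $C\neq\emptyset$ retained. For $D\neq\emptyset$ this constraint is vacuous and the sum equals $\sum_{C'\subseteq[n]\setminus D}(-1)^{|C'|}$, which is $0$ for $D\subsetneq[n]$ and $1$ for $D=[n]$; for $D=\emptyset$ the removal of the term $C=\emptyset$ gives $-1$. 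Your written coefficient $(-1)^{|D|}\sum_{C:\,D\subseteq C\subseteq[n]}(-1)^{|C|-|D|}$ has both defects: the spurious prefactor would give $(-1)^n$ rather than $1$ for $D=[n]$, and dropping the $C\neq\emptyset$ restriction would give $0$ rather than $-1$ for $D=\emptyset$. Once these two points are corrected you have precisely the paper's quantity $Z(D)$ and the lemma follows. Your proposed inductive fallback on $n$ is also viable, but the direct expansion is clean once the sign is right.
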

\begin{proof} The key observation is that, for every $k\geq 1$ and every $B = \{i_1,...,i_k\}$,
\[
\Delta_{i_1}\cdots \Delta_{i_k}f(y',y) = \sum_{A\subseteq B} (-1)^{|A|} f^A(y',y),
\]
a relation that can be easily proved by recursion. By virtue of this fact, one can now rewrite the right-hand side of \eqref{e:a} as 
\begin{equation}\label{e:b}
\sum_{A\subseteq [n]} \psi(A)\times  Z(A),
\end{equation}
where $\psi(A) :=  f^A(y',y)$ and $Z(A) := \sum_{B:B\neq \emptyset, A\subseteq B} (-1)^{|B\backslash A|}$. Standard combinatorial considerations yield that $Z([n]) = 1$, $Z(\emptyset) = -1$ and $Z(A) = 0$, for every non-empty strict subset of $[n]$. This implies that \eqref{e:b} is indeed equal to $\psi([n]) - \psi(\emptyset)$, and the desired conclusion follows at once.\end{proof}

\medskip

Now fix an integer $n$, as well as {\blue $n$-dimensional} vectors $X$ and $X'$ as above {\blue (in particular, $X'$ is an independent copy of $X$)}: the following statement provides an alternate description of the Hoeffding decomposition of $f(X)$ in terms of the difference operators defined above.

\begin{theorem}[Hoeffding decompositions]\label{t:hd}
Let $f : E^n \to \mathbb{R}$ be such that $E[f(X)^2]<\infty$. One has the following representation for $f(X)$:
\begin{equation}\label{e:newh}
f(X) = \E[f(X)] + \sum_{k=1}^n \sum_{1\leqslant i_1<\cdots< i_k \leqslant n} (-1)^k \E\left[ \Delta_{i_1}\cdots \Delta_{i_k} f(X',X) | X\right].
\end{equation}
Formula \eqref{e:newh} coincides with the Hoeffding decomposition \eqref{e:ch}  of $f(X)$: in particular, one has that, for any choice of $i_1,...,i_k$, $\E\left[ \Delta_{i_1}\cdots \Delta_{i_k} f(X',X) | X\right] = \varphi_{i_1,...,i_k} (X_{i_1},...,X_{i_k})$, and consequently
\begin{equation}\label{e:cov}
\E\Big\{ \E\left[ \Delta_{i_1}\cdots \Delta_{i_k} f(X',X) | X\right]\times \E\left[ \Delta_{j_1}\cdots \Delta_{j_l} f(X',X) | X\right] \Big\} = 0,
\end{equation}
whenever $\{i_1,...,i_k\} \neq \{j_1,...,j_l\}$.
\end{theorem}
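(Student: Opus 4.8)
The plan is to apply Lemma~\ref{l:1} with $y = X$ and $y' = X'$, take conditional expectation given $X$, and then identify the resulting summands with the Hoeffding kernels by checking the degeneracy property, using the uniqueness of the Hoeffding decomposition. First I would start from the identity~\eqref{e:a}, which, after substituting $(y,y') = (X,X')$, reads
\[
f(X) - f(X') = \sum_{k=1}^n \sum_{1\leqslant i_1<\cdots< i_k \leqslant n} (-1)^k \Delta_{i_1}\cdots \Delta_{i_k} f(X',X).
\]
Taking expectation over $X'$ only (i.e. conditioning on $X$) and using $\E[f(X')\,|\,X] = \E[f(X)]$ (since $X'$ is an independent copy of $X$), one immediately obtains~\eqref{e:newh} with the kernels provisionally defined by $\widetilde\varphi_{i_1,\dots,i_k} := (-1)^k\,\E[\Delta_{i_1}\cdots\Delta_{i_k} f(X',X)\,|\,X]$. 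It remains to argue that these coincide with the Hoeffding kernels $\varphi_{i_1,\dots,i_k}$.

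The second step is to check that $\widetilde\varphi_{i_1,\dots,i_k}$ is, first, a function of $(X_{i_1},\dots,X_{i_k})$ alone and, second, satisfies the degeneracy condition. The measurability claim follows from the fact that $\Delta_{i_1}\cdots\Delta_{i_k} f(X',X)$ can be written as $\sum_{A\subseteq B}(-1)^{|A|} f^A(X',X)$ with $B=\{i_1,\dots,i_k\}$ (the recursion from the proof of Lemma~\ref{l:1}); in each term $f^A(X',X)$ the coordinates with index in $B^c$ are kept equal to $X_j$ — wait, that is the opposite — more precisely $\Delta_{i_1}\cdots\Delta_{i_k}$ acting on $f(X',X)$ only ever modifies the coordinates in $B$, so conditionally on $X$ every coordinate $X'_j$ with $j \notin B$ is replaced by $X_j$ and gets integrated out, leaving a function of $(X_{i_1},\dots,X_{i_k})$ only (this uses independence of the components of $X'$). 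For the degeneracy condition, fix a strict subset $\{j_1,\dots,j_a\}\subsetneq\{i_1,\dots,i_k\}$ and pick an index $i_\ell \in B\setminus\{j_1,\dots,j_a\}$. Conditioning further on $X_{j_1},\dots,X_{j_a}$ and integrating out $X_{i_\ell}$, I would use that $\Delta_{i_\ell}$ applied to any function $g$ satisfies $\E[\Delta_{i_\ell} g(X',X)\,|\,\text{everything except } X_{i_\ell}] = 0$, because $\Delta_{i_\ell} g(X',X) = g(\cdots X_{i_\ell}\cdots) - g(\cdots X'_{i_\ell}\cdots)$ and $X_{i_\ell}, X'_{i_\ell}$ are i.i.d.; since the iterated operator commutes (as noted in the excerpt) I can peel off $\Delta_{i_\ell}$ last, so the whole conditional expectation vanishes.

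Having verified that the $\widetilde\varphi_{i_1,\dots,i_k}$ form a decomposition of the form~\eqref{e:ch} with the correct degeneracy, the third step is to invoke the uniqueness of the Hoeffding decomposition (classical, cited in the excerpt) to conclude $\widetilde\varphi_{i_1,\dots,i_k} = \varphi_{i_1,\dots,i_k}$ for all index sets. The orthogonality relation~\eqref{e:cov} is then immediate: for $\{i_1,\dots,i_k\}\neq\{j_1,\dots,j_l\}$ one of the two index sets, say the $j$'s, contains an index not in the other, so conditioning on $(X_{i_1},\dots,X_{i_k})$ kills the $j$-term by the degeneracy property, and the tower rule finishes the computation.

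I expect the main obstacle to be the bookkeeping in the degeneracy check — making fully rigorous the claim that the iterated difference operator, conditioned on a sub-collection of coordinates, integrates to zero as soon as one of its "active" indices is not among the conditioning ones. This requires being careful that $\Delta_{i_1}\cdots\Delta_{i_k} f(X',X)$ genuinely depends on $X'_{i_\ell}$ in a "first-order difference" way for each $\ell$, which is exactly the content of the permutation-invariance and the recursive definition; everything else is routine manipulation of conditional expectations and independence.
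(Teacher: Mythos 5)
Your proposal is correct and follows essentially the same route as the paper: apply Lemma~\ref{l:1} with $(y,y')=(X,X')$, take conditional expectation given $X$ to get \eqref{e:newh}, and then establish the degeneracy/orthogonality by noting that $\Delta_{i_1}\cdots\Delta_{i_{k-1}}f(X',X)$ depends only on $X'$ and $X_{i_1},\dots,X_{i_{k-1}}$, so that (by independence and the i.i.d.\ nature of $X'_{i_k},X_{i_k}$) peeling off one more $\Delta_{i_k}$ produces two terms with equal conditional expectations, hence the iterated difference integrates to zero. The only cosmetic difference is that you go explicitly through the uniqueness of the Hoeffding decomposition to identify $\widetilde\varphi_{i_1,\dots,i_k}=\varphi_{i_1,\dots,i_k}$ before deducing \eqref{e:cov} from the classical theory, whereas the paper proves \eqref{e:cov} directly from the vanishing of the conditional expectation given $X_{i_1},\dots,X_{i_{k-1}}$ and leaves the identification of the kernels as a consequence of the uniqueness already cited; the underlying independence argument is the same.
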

\begin{proof}
By Lemma \ref{l:1},
\[
f(X) = f(X') + \sum_{k=1}^n \sum_{1\leqslant i_1<\cdots i_k \leqslant n} (-1)^k  \Delta_{i_1}\cdots \Delta_{i_k} f(X',X),
\]
and \eqref{e:newh} follows at once by taking conditional expectations {\blu with respect to $X$} on both sides. To prove \eqref{e:cov}, it suffices to show the following stronger result: for every $1\leqslant i_1<\ldots < i_k\leqslant n$ (all $k$ indices different),
\[
\E\left[ \Delta_{i_1}\cdots \Delta_{i_k} f(X',X) | X_{i_1},\ldots, X_{i_{k-1}} \right] = 0.
\]
This {\blue is a consequence of} the following fact: the random variable $ \Delta_{i_1}\cdots \Delta_{i_{k-1}} f(X',X)$ is a function of $X_{i_1},\ldots, X_{i_{k-1}}$ and of $X'$.  By independence, it follows that 
\[
\E\left[ \Delta_{i_1}\cdots \Delta_{i_{k-1} } f(X',X) | X_{i_1},\ldots, X_{i_{k-1}} \right] = \E\left[ (\Delta_{i_1}\cdots \Delta_{i_{k-1} } f(X',X))_{i_k} | X_{i_1},\ldots, X_{i_{k-1}} \right] 
\]
where the random variable $(\Delta_{i_1}\cdots \Delta_{i_{k-1} } f(X',X))_{i_k}$ has been obtained from $\Delta_{i_1}\cdots \Delta_{i_{k-1} } f(X',X)$ by replacing $X'_{i_k}$ with $X_{i_k}$. Since (as already observed) 
\[
\Delta_{i_k}\Delta_{i_1}\cdots \Delta_{i_{k-1} } f(X',X) = \Delta_{i_1}\cdots \Delta_{i_{k} } f(X',X), 
\]
{\blue we deduce immediately the desired conclusion.}
\end{proof}

{\blue The next statement is a direct consequence of \eqref{e:newh}--\eqref{e:cov}.}

\begin{corollary} Let $f(X)$ be as in the statement of Theorem \ref{t:hd}. Then, the variance of $f(X)$ can be expanded as follows:
\begin{equation}\label{e:v}
\var(f(X)) = \sum_{k=1}^n \sum_{1\leqslant i_1<\cdots< i_k \leqslant n}\E\left[\left(  \E\left[ \Delta_{i_1}\cdots \Delta_{i_k} f(X',X) | X\right] \right)^2\right].
\end{equation}
\end{corollary}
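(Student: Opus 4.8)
The plan is to derive the variance expansion \eqref{e:v} directly from the orthogonal decomposition \eqref{e:newh} together with the pairwise orthogonality relation \eqref{e:cov}, exactly as one derives Parseval's identity from an orthogonal family. First I would set, for each non-empty subset $B = \{i_1,...,i_k\} \subseteq [n]$, the shorthand $U_B := (-1)^k \E[\Delta_{i_1}\cdots \Delta_{i_k} f(X',X)\,|\,X]$, so that Theorem \ref{t:hd} reads $f(X) - \E[f(X)] = \sum_{\emptyset \neq B \subseteq [n]} U_B$. Each $U_B$ is square-integrable (being a conditional expectation of a square-integrable random variable, since $\Delta_{i_1}\cdots\Delta_{i_k} f(X',X)$ is a finite signed sum of terms $f^A(X',X)$, each equal in law to $f(X)$ and hence in $L^2$) and centred (by \eqref{e:cov} applied with the empty set, or directly because $\E[U_B] = \E\{\E[U_B|X]\}$ and taking a further expectation in \eqref{e:newh} forces all terms to vanish).

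Next I would compute $\var(f(X)) = \E\big[(\sum_B U_B)^2\big] = \sum_{B} \E[U_B^2] + \sum_{B \neq B'} \E[U_B U_{B'}]$, where the sums run over non-empty subsets of $[n]$, and the interchange of expectation with the finite sums is unproblematic. The cross terms $\E[U_B U_{B'}]$ with $B \neq B'$ vanish by \eqref{e:cov}: indeed $(-1)^{|B|}(-1)^{|B'|}\E[U_B U_{B'}]$ is precisely the left-hand side of \eqref{e:cov} with $\{i_1,...,i_k\} = B$ and $\{j_1,...,j_l\} = B'$. What survives is $\sum_{\emptyset \neq B \subseteq [n]} \E[U_B^2]$; since the sign $(-1)^k$ squares away, $\E[U_B^2] = \E\big[(\E[\Delta_{i_1}\cdots\Delta_{i_k} f(X',X)|X])^2\big]$, and reorganising the sum over subsets $B$ as the double sum over cardinalities $k$ and over increasing index tuples $1 \leqslant i_1 < \cdots < i_k \leqslant n$ yields exactly \eqref{e:v}.

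There is essentially no hard step here — this is a routine "square an orthogonal sum" argument, and all the substance was already carried by Theorem \ref{t:hd}. The only points requiring a word of care are the bookkeeping ones: confirming that every $U_B$ lies in $L^2$ so that the expansion of the square is legitimate, and matching the bijection between non-empty subsets $B$ and pairs $(k, (i_1,...,i_k))$ with $k \geqslant 1$ so that the indexing of \eqref{e:v} is correct. Both are immediate given the finiteness of all the sums involved.
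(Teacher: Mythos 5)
Your argument is correct and is exactly the route the paper takes: the paper states the corollary as a ``direct consequence'' of \eqref{e:newh}--\eqref{e:cov}, i.e.\ squaring the orthogonal decomposition and killing the cross terms via \eqref{e:cov}, which is precisely what you spell out. The $L^2$-membership and indexing bookkeeping you flag are fine and add nothing beyond what the theorem already guarantees.
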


As a first application of \eqref{e:v}, we present a useful lower bound for variances.

\begin{corollary}
\label{lem:var-lower-bound} {\blue Let $f(X)$ be as in the statement of Theorem \ref{t:hd}. Then, one has the lower bound
\[
\var(f(X)) \geq  \sum_{i=1}^n \E\left[\left(  \E\left[ \Delta_{i}f(X',X) | X\right] \right)^{2}\right]
\]
In particular}, if $X =(X_{1},\dots ,X_{n})$ is a collection of $n$ i.i.d. random variables with common distribution equal to $\mu$, and $f:E^n \to \mathbb{R}$ is a symmetric mapping such that $E[f(X)^2]<\infty$, then 
\begin{align*}
\var(f(X) )\geq n\int_{E}\left( \E [f (X)-f(x,X_{2},\dots ,X_{n})] \right)^{2}\, \mu(dx).
\end{align*}
\end{corollary}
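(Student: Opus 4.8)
The plan is to derive Corollary \ref{lem:var-lower-bound} directly from the variance expansion \eqref{e:v} and then specialise to the i.i.d.\ symmetric case. For the first (general) assertion, I would simply observe that every summand appearing on the right-hand side of \eqref{e:v} is of the form $\E[(\E[\Delta_{i_1}\cdots\Delta_{i_k}f(X',X)\mid X])^2]\geq 0$, being an expectation of a square. Hence, discarding all terms with $k\geq 2$ and keeping only the $k=1$ terms $\sum_{i=1}^n \E[(\E[\Delta_i f(X',X)\mid X])^2]$ yields the claimed lower bound. No orthogonality is even needed here; it is a pure nonnegativity argument.

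Next I would pass to the i.i.d.\ symmetric case. First note that the inner conditional expectation simplifies: since $\Delta_i f(X',X)=f(X')-f^i(X',X)$, where $f^i(X',X)$ replaces the $i$-th coordinate of $X'$ by $X_i$, and $X'$ is independent of $X$, the conditional expectation $\E[\Delta_i f(X',X)\mid X]$ is a function of $X_i$ alone. Writing this function out, $\E[\Delta_i f(X',X)\mid X_i=x] = \E[f(X')] - \E[f(X'_1,\dots,X'_{i-1},x,X'_{i+1},\dots,X'_n)]$; using that $X'$ has the same law as $X$ and that $f$ is symmetric (so we may move the $i$-th slot to the first slot without changing the value), this equals $\E[f(X) - f(x,X_2,\dots,X_n)]$, independently of $i$. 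Therefore each of the $n$ summands in the general lower bound equals the same quantity $\int_E (\E[f(X)-f(x,X_2,\dots,X_n)])^2\,\mu(dx)$, and summing over $i=1,\dots,n$ gives the factor $n$, which is exactly the stated bound.

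The one point that requires a little care — and which I would present explicitly — is the interchange of conditioning and the use of symmetry: one must justify that $\E[\Delta_i f(X',X)\mid X]$ depends only on $X_i$ (this is immediate from independence of $X$ and $X'$, since all coordinates of $X$ other than $X_i$ disappear upon integrating out $X'$), and that after replacing $X'$ by an independent copy of $X$ the symmetry of $f$ lets us relocate the active coordinate to the first position. Neither step is deep, but stating them cleanly is what turns the general bound into the clean integral formula. Apart from this, there is no genuine obstacle: the corollary is an immediate consequence of \eqref{e:v} together with the elementary fact that a sum of nonnegative terms is at least any sub-sum of it.
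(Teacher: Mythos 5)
Your proof is correct and follows exactly the route the paper intends: the general lower bound comes from discarding the nonnegative $k\geq 2$ terms in the variance expansion \eqref{e:v}, and the i.i.d.\ symmetric case follows by identifying $\E[\Delta_i f(X',X)\mid X]$ as the function $x\mapsto \E[f(X)-f(x,X_2,\dots,X_n)]$ evaluated at $X_i$. The paper omits an explicit proof, treating the corollary as immediate from \eqref{e:v}; your write-up simply supplies the (correct) details.
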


\begin{remark}
The estimates in {\blue Corollary} \ref{lem:var-lower-bound} should be compared with the classical {\it Efron-Stein inequality} (see e.g. \cite[Chapter 3]{BLMbook}), {\blue stating that
\[
\var(f(X)) \leqslant \frac12  \sum_{i=1}^n \E\left[\Delta_{i}f(X,X')^{2}\right],
\]
which, in the case where the $X_i$ are i.i.d. and $f$ is symmetric, becomes}
\begin{align*}
\var(f(X) )\leqslant \frac{n}{2}\int_{E}  \E [(f (X)-f(x,X_{2},\dots ,X_{n}))^2] \, \mu(dx).
\end{align*}
For instance, if $f(X)=X_{1}+\dots +X_{n}$ {\blue is a sum of real-valued independent and square-integrable random variables, then the Efron-Stein upper bounds coincides with the lower bound in Corollary \ref{lem:var-lower-bound}, that is:
$$
\sum_{i=1}^n \E\left[\left(  \E\left[ \Delta_{i}f(X',X) | X\right] \right)^{2}\right]=\frac12  \sum_{i=1}^n \E\left[\Delta_{i}f(X,X')^{2}\right] = \sum_{i=1}^n {\bf Var}(X_i).
$$}{\blue Heuristically}, in the general case where the $X_i$ are i.i.d. and $f$ is symmetric, it seems that, {\blue in order for the Efron-Stein upper bound and  the lower bound of {\blue Corollary} \ref{lem:var-lower-bound} to have the same magnitude, it is necessary that the functional $f(X)$ is not {\it homogeneous}, meaning that the law of $f (X)-f(x,X_{2},\dots ,X_{n})$ depends on $x$.  Examples of such a behaviour will be described in Section \ref{sec:voronoi}, where we will deal with Voronoi approximations.}

%
\end{remark}

\subsection{Another subset-based interpolation}\label{ss:another}

Let $n\geq 1$, let $f: E^n \to \mathbb{R}$, and let $y,y'\in E^n$. In \cite{Cha08}, the following formula is pointed out: \begin{equation}\label{e:c}
f(y) - f(y') = \sum_{A\subsetneq [n]} \frac{1}{\binom{n}{|A|} (n-|A|)}\sum_{j\notin A} \Delta_j f(y^A,y'),
\end{equation}
where the vector $y^A$ has been obtained from $y$ by replacing $y_i$ with $y'_i$ whenever $i\in A$, {\blue in such way that, with our notation}, $\Delta_j f(y^A,y') =f(y^A) - f(y^{A\cup\{j\}}) = f^{A}(y,y') - f^{A\cup\{j\}}(y,y')$.  

Now consider a vector $X = (X_1,...,X_n)$, with independent components and with values in $E^n$, and let $X'$ be an independent copy of $X$. For every $A\subseteq [n]$, we define $X^A = (X_1^A,..., X_n ^A)$ according to the above convention, that is:
\begin{align*}
 X_{i}^{A}=
\begin{cases}X_{i}$ if $i\notin A\\
X'_{i}$ otherwise$.
\end{cases}
\end{align*}

The following statement is a direct consequence of \eqref{e:c}.

\begin{proposition}[See \cite{Cha08}, Lemma 2.3] For every $f,g : A^n \to \mathbb{R}$ such that $E[f(X)^2],E[g(X)^2]<\infty$,
\begin{equation}\label{e:d}
{\bf Cov} (f(X), g(X)) = {\blue \frac12}\sum_{A\subsetneq [n]} \frac{1}{\binom{n}{|A|} (n-|A|)}\sum_{j\notin A}\E[\Delta_j g(X,X')\Delta_j f(X^A,X')].
\end{equation}

\end{proposition}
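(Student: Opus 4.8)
The plan is to derive the covariance formula \eqref{e:d} directly from the pointwise identity \eqref{e:c} by a bilinear/averaging argument, exactly in the spirit of Chatterjee's original proof, while being careful about the symmetrization that produces the factor $\frac12$. First I would record the elementary consequence of \eqref{e:c} that, for any $f$,
\[
f(X) - f(X') = \sum_{A\subsetneq [n]} \frac{1}{\binom{n}{|A|}(n-|A|)} \sum_{j\notin A} \Delta_j f(X^A,X'),
\]
and, since $X$ and $X'$ are i.i.d., the same identity holds with the roles of $X$ and $X'$ swapped. Multiplying the first identity by $g(X)$, taking expectations, and using $\E[f(X')g(X)] = \E[f(X)]\E[g(X)]$ by independence, I would obtain
\[
\mathrm{Cov}(f(X),g(X)) = \E\big[(f(X)-f(X'))\,g(X)\big] = \sum_{A\subsetneq[n]} \frac{1}{\binom{n}{|A|}(n-|A|)} \sum_{j\notin A} \E\big[\Delta_j f(X^A,X')\, g(X)\big].
\]

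Next I would symmetrize in the $g$-variable. Writing $g(X) = \tfrac12(g(X)+g(X'))$ inside the last expectation is not quite what is wanted; instead the trick is to observe that, for fixed $A$ and $j\notin A$, one may also run the whole computation starting from the identity for $g$ and pairing against $f$, and then average the two resulting expressions. Concretely, by the same reasoning applied to $g$,
\[
\mathrm{Cov}(f(X),g(X)) = \sum_{A\subsetneq[n]} \frac{1}{\binom{n}{|A|}(n-|A|)} \sum_{j\notin A} \E\big[\Delta_j g(X^A,X')\, f(X)\big],
\]
and one checks, using that the joint law of $(X,X')$ is exchangeable and that $\{X^A : A\subsetneq[n]\}$ together with the swap $X\leftrightarrow X'$ generates the relevant symmetry, that the summand $\E[\Delta_j f(X^A,X')\,g(X)]$ equals $\E[\Delta_j g(X,X')\,\Delta_j f(X^A,X')]$ after the appropriate relabelling; averaging the "$f$ against $g$" and "$g$ against $f$" versions then yields precisely \eqref{e:d} with the prefactor $\frac12$. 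The clean way to phrase this is: replace $g(X)$ by $\tfrac12(g(X^A) - g(X^{A\cup\{j\}})) + (\text{telescoping remainder})$ using that, conditionally on everything except coordinate $j$, swapping $X_j$ and $X'_j$ is measure-preserving, so $\E[\Delta_j f(X^A,X')\,g(X^A)] = -\E[\Delta_j f(X^A,X')\,g(X^{A\cup\{j\}})] = \tfrac12\E[\Delta_j f(X^A,X')\,\Delta_j g(X^A, X')]$; and a further exchangeability argument (swapping the blocks $A$ and its complement, resp. $X$ and $X'$) identifies $\Delta_j g(X^A,X')$ in distribution, jointly with $\Delta_j f(X^A,X')$, with $\Delta_j g(X,X')$.

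The main obstacle, and the step deserving the most care, is precisely this last identification: one must verify that for each fixed pair $(A,j)$ with $j\notin A$ the random variables $\Delta_j f(X^A, X')$ and $\Delta_j g(X, X')$ have the same joint law as the pair appearing in \eqref{e:d}, which hinges on the fact that the coordinates are independent and $X'$ is an independent copy of $X$, so that $(X^A, X')$ is itself a pair of the form (mixed vector, independent copy) in the relevant coordinates. Once the symmetrization identity $\E[\Delta_j f(X^A,X')\,g(X^A)] = \tfrac12\,\E[\Delta_j f(X^A,X')\,\Delta_j g(X^A,X')]$ is in hand and the exchangeability bookkeeping is done, \eqref{e:d} follows by summing over $A$ and $j$. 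I would also remark that one may alternatively simply quote \cite{Cha08}, Lemma 2.3, since the statement is attributed there; but the self-contained derivation above, starting from \eqref{e:c}, is short and worth including for completeness.
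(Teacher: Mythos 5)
The paper gives no proof of this proposition; it is stated as a ``direct consequence'' of \eqref{e:c} with a citation to \cite{Cha08}, Lemma 2.3. Your high-level strategy is exactly Chatterjee's: write $\mathrm{Cov}(f,g)=\E[(f(X)-f(X'))g(X)]$ using independence, substitute \eqref{e:c}, and exploit the measure-preserving swap $X_j\leftrightarrow X'_j$ to produce the prefactor $\tfrac12$. However, the way you have carried it out contains a genuine error in the central step. You assert that the summand $\E[\Delta_j f(X^A,X')\,g(X)]$ equals $\E[\Delta_j g(X,X')\,\Delta_j f(X^A,X')]$ ``after the appropriate relabelling,'' and then use an averaging of the ``$f$ against $g$'' and ``$g$ against $f$'' representations to account for the factor $\tfrac12$. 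But this is off by a factor of $2$: the correct identity is
\begin{equation*}
\E\big[\Delta_j f(X^A,X')\,g(X)\big] \;=\; \tfrac12\,\E\big[\Delta_j f(X^A,X')\,\Delta_j g(X,X')\big],
\end{equation*}
and once this is in hand, the ``$f$ against $g$'' version already yields \eqref{e:d} with no averaging whatsoever. (If your claimed equality without the $\tfrac12$ held, the single representation would give twice the right-hand side of \eqref{e:d}, and averaging with the symmetric version would not repair this, since both give the same value.)

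The proof of the boxed identity is simply: swap $X_j\leftrightarrow X'_j$. Since $j\notin A$, this sends $f(X^A)\mapsto f(X^{A\cup\{j\}})$ and $f(X^{A\cup\{j\}})\mapsto f(X^A)$, hence $\Delta_j f(X^A,X')\mapsto -\Delta_j f(X^A,X')$; it also sends $g(X)\mapsto g^j(X,X')$. Therefore $\E[\Delta_j f(X^A,X')\,g(X)]=-\E[\Delta_j f(X^A,X')\,g^j(X,X')]$, which upon adding gives $\E[\Delta_j f(X^A,X')\,(g(X)-g^j(X,X'))]=2\,\E[\Delta_j f(X^A,X')\,g(X)]$, i.e.\ the boxed identity. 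Two further remarks: your ``clean way to phrase this'' works with $g(X^A)$ and $\Delta_j g(X^A,X')$ rather than $g(X)$ and $\Delta_j g(X,X')$, which is not the expression that arises from \eqref{e:c}; and the ``further exchangeability argument'' you invoke (swapping the $A$-block) sends the pair $(\Delta_j f(X^A,X'),\,\Delta_j g(X^A,X'))$ to $(\Delta_j f(X,X'),\,\Delta_j g(X,X'))$, not to the mixed pair $(\Delta_j f(X^A,X'),\,\Delta_j g(X,X'))$ that you would need, so it does not close the gap. Replacing your central claim with the boxed identity (and deleting the averaging and the $X^A$-version of $g$) yields a correct, self-contained proof.
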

{\blue 

To simplify the notation, we shall sometimes write $$\frac{1}{ \binom{n}{ | A | }(n- | A | ) }:=\kappa _{n,A}.$$ Observe that, for every $j$, $\sum_{A\subsetneq [n]:j\notin A}\kappa _{n,A} =1$

\begin{remark} As demonstrated in \cite[Lemmas 7.8-7.10]{Cha13}, the identity \eqref{e:d} can also be used to deduce effective lower bounds on variances. Such lower bounds seem to have a different nature from the ones that can be proved by means of Hoeffding decompositions.
\end{remark}
}

{\blue 

\section{\blu Stein's method and a new approximate Taylor expansion}\label{s:stein}

Let $U$ and $V$ be two real-valued random variables. The {\it Kolmogorov distance} between the distributions of $U$ and $V$ is given by
\begin{align*}
d_{K}(U,V)=\sup_{t\in \mathbb{R}} | \P(U\leqslant t)-\P(V\leqslant t) | .
\end{align*}

As anticipated in the Introduction, our aim in this paper is to provide upper bounds for quantities of the type $d_K(W,N)$, where $W = f(X)$ and $N$ is a standard Gaussian random variable, that are based on the use of Stein's method. The following statement gathers together some classical facts concerning Stein's equations and their solutions (see Points (a)--(e) below), together with a new {\blu important approximate Taylor expansion for solutions of Stein's equations, that we partially} extrapolated from reference \cite{EicTha14} (see Point (f) below), generalising previous findings from \cite{Sch12}; see also \cite[Theorem 2]{BouPecSurvey}.

\begin{proposition}\label{p:stein} Let $N\sim \mathcal{N}(0,1)$ be a centred Gaussian random variable with variance 1 and, for every $t\in\mathbb{R}$, consider the Stein's equation
\begin{align}
\label{eq:stein-equation}
g'(w)-wg(w)=\one_{w\leqslant t}-\P(N\leqslant t),
\end{align}
where $w\in \mathbb{R}$. Then, for every real $t$, there exists a function $g_t : \mathbb{R}\to \mathbb{R} : w\mapsto g_t(w)$ with the following properties:
\begin{itemize}

\item[\rm (a)] $g_t$ is continuous at every point $w\in \mathbb{R}$, and infinitely differentiable at every $w\neq t$;

\item[\rm (b)] $g_t$ satisfies the relation \eqref{eq:stein-equation}, for every $w\neq t$;

\item[\rm (c)] $0<g_{t}\leqslant c:= \frac{\sqrt{2\pi }}{4}$;

\item[\rm (d)] for every $u,v,w\in \mathbb{R}$,
\begin{equation}\label{e:zumba}
\vert (w+u)g_t(w+u) - (w+v)g_t(w+v)\vert \leqslant \left( \vert w \vert + \frac{\sqrt{2\pi}}{4}\right) \left(\vert u \vert + \vert v \vert \right);
\end{equation}

\item[\rm (e)] adopting the convention 
\begin{equation}\label{e:convention}
g'_t(t) : = tg_t(t)+1-\P(N\leqslant t), \quad 
\end{equation}
one has that $|g'_t(w)|\leqslant 1$, for every real $w$.

\item[\rm (f)] using again the convention \eqref{e:convention}, for all $w,h\in \mathbb{R}$ one has that
\begin{align}
\label{eq:stein}
 | g_{t}(w+h)-g_{t}(w)-g_{t}'(w)h | &\leqslant \frac{ |  h|^{2} }{2}\left(  | w| +\frac{\sqrt{2\pi }}{4} \right)\\
 &\quad\quad\quad + | h | (\one_{[w, w+h)}(t) + \one_{[w+h, w)}(t))\notag \\
 &= \frac{ |  h|^{2} }{2}\left(  | w| +\frac{\sqrt{2\pi }}{4} \right)\label{eq:stein2} \\
 &\quad\quad\quad+ h\left(\one_{ \left[ w , w + h \right)}(t) - \one_{\left[  w+h , w\right)}(t) \right).\notag
\end{align}

\end{itemize}
\end{proposition}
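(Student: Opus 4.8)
The plan is to deduce everything from the explicit formula for the solution, regarding (a)--(e) as classical and concentrating the work on the new estimate (f). Writing $\Phi$ for the standard Gaussian distribution function, the candidate solution is
\[
g_t(w)=e^{w^2/2}\int_{-\infty}^{w}\big(\one_{x\leqslant t}-\Phi(t)\big)e^{-x^2/2}\,dx
=\sqrt{2\pi}\,e^{w^2/2}\begin{cases}\Phi(w)\big(1-\Phi(t)\big), & w\leqslant t,\\ \big(1-\Phi(w)\big)\Phi(t), & w> t.\end{cases}
\]
Differentiating the integral (for $w\neq t$) gives \eqref{eq:stein-equation}, which is (b); the second expression makes (a) transparent (the two branches agree at $w=t$, and each is $C^\infty$); and it also yields (c), since positivity is immediate and, because $w\mapsto e^{w^2/2}\Phi(w)$ is increasing and $w\mapsto e^{w^2/2}\big(1-\Phi(w)\big)$ is decreasing, one gets $g_t(w)\leqslant\sqrt{2\pi}\,\Phi(t)\big(1-\Phi(t)\big)e^{t^2/2}$ for every $w$, a quantity maximised at $t=0$ with value $\sqrt{2\pi}/4$. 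Estimates (d) and (e) are precisely the standard Stein bounds for the Kolmogorov metric, and I would simply quote them from \cite{CGS2011} (see also \cite{Sch12,EicTha14}). The one structural remark I would add is that (e), together with \eqref{eq:stein-equation} and the convention \eqref{e:convention}, upgrades the Stein relation to the pointwise identity $g_t'(x)=x\,g_t(x)+c_t(x)$ valid for \emph{every} $x\in\mathbb{R}$, where $c_t(x):=1-\Phi(t)$ for $x\leqslant t$ and $c_t(x):=-\Phi(t)$ for $x>t$; it is exactly to make this hold \emph{at} $x=t$ that the convention \eqref{e:convention} is chosen.

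For (f) I would argue as follows. Since $g_t$ is continuous, piecewise $C^\infty$, and $|g_t'|\leqslant 1$ by (e), it is Lipschitz, hence absolutely continuous, and the exceptional point $x=t$ is Lebesgue--null; thus the fundamental theorem of calculus gives, for all $w,h\in\mathbb{R}$,
\[
g_t(w+h)-g_t(w)-g_t'(w)h=\int_0^h\big(g_t'(w+s)-g_t'(w)\big)\,ds .
\]
Substituting $g_t'(x)=x\,g_t(x)+c_t(x)$ splits the integrand as $\big((w+s)g_t(w+s)-w\,g_t(w)\big)+\big(c_t(w+s)-c_t(w)\big)$. For the first bracket, (d) applied with $u\leftarrow s$ and $v\leftarrow 0$ gives $|(w+s)g_t(w+s)-w\,g_t(w)|\leqslant\big(|w|+\tfrac{\sqrt{2\pi}}{4}\big)|s|$, and integrating in $s$ over $[0,h]$ contributes exactly $\tfrac{|h|^2}{2}\big(|w|+\tfrac{\sqrt{2\pi}}{4}\big)$. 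For the second bracket, $c_t(w+s)-c_t(w)$ is $\{-1,0,1\}$-valued and vanishes unless $t$ lies, in the appropriate half-open sense (thanks to the convention, which assigns $x=t$ the value $c_t(t)=1-\Phi(t)$), between $w$ and $w+s$, so that $|c_t(w+s)-c_t(w)|=\one_{[w,w+s)}(t)+\one_{[w+s,w)}(t)$; the integral of this indicator over $s\in[0,h]$ is bounded by $|h|\big(\one_{[w,w+h)}(t)+\one_{[w+h,w)}(t)\big)$. Adding the two contributions gives \eqref{eq:stein}, and \eqref{eq:stein2} follows at once, since for each fixed $h$ at most one of the two indicators is nonzero, whence $h\big(\one_{[w,w+h)}(t)-\one_{[w+h,w)}(t)\big)=|h|\big(\one_{[w,w+h)}(t)+\one_{[w+h,w)}(t)\big)$.

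The routine parts are the verification of (a)--(c) and the Gaussian-tail monotonicity used there. The genuinely delicate point is the handling of the jump of $g_t'$ at $t$: one must justify the fundamental theorem of calculus across this point — hence the Lipschitz/absolute-continuity observation — and then track the endpoints of the indicator intervals with enough care that the resulting boundary term is exactly $|h|\big(\one_{[w,w+h)}(t)+\one_{[w+h,w)}(t)\big)$ and not something off by one of the edge cases; this is precisely where the specific form of the convention \eqref{e:convention} enters. A secondary caveat is that (f) is not self-contained, as it rests on estimate (d), which I am importing from the literature; an alternative would be to prove (d) and (f) together directly from the explicit formula above, trading the clean telescoping argument for a longer computation with Gaussian tails.
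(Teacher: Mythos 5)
Your proof of (f) follows the same route as the paper: express the Taylor remainder as $\int_0^h(g_t'(w+s)-g_t'(w))\,ds$, substitute the Stein equation to split the integrand into the $(w+s)g_t(w+s)-wg_t(w)$ part (bounded via (d)) and the jump in the indicator (producing the half-open-interval indicators), and treat (a)--(e) as classical imports. Your version adds a small but genuine refinement the paper leaves implicit, namely justifying the fundamental theorem of calculus across the kink at $w=t$ via the Lipschitz/absolute-continuity observation; and you avoid the paper's explicit sign-of-$h$ case split by working directly with $|c_t(w+s)-c_t(w)|=\one_{[w,w+s)}(t)+\one_{[w+s,w)}(t)$, which is a cleaner way to arrive at the same boundary term.
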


\begin{proof} The proofs of Points (a)--(e) are classical, and can be found e.g. in \cite[Lemma 2.3]{CGS2011}. We will prove (f) by following the same line of reasoning adopted in \cite[Proof of Theorem 3.1]{EicTha14}. Fix $t\in \mathbb{R}$, recall the convention \eqref{e:convention} and observe that, for every $w,h \in \mathbb{R}$, we can write
\begin{equation*}
g_t(w+h)-g_t(w) - hg'_t(w) = \int_{0}^{h}\left(g_t'(w+u) - g'(w) \right) du.
\end{equation*}  
Since $g_t$ solves the Stein's equation \eqref{eq:stein-equation} for every real $w$, we have that, for all $w,h \in \mathbb{R}$,
\begin{eqnarray*}
&& g_t(w+h)-g_t(w) - hg'_t(w) \\
&& = \int_{0}^{h}\left((w+u)g_t(w+u) - wg_t(w) \right) du + \int_{0}^{h}\left(\one_{\left\lbrace w + u \leqslant t\right\rbrace } - \one_{\left\lbrace w \leqslant t\right\rbrace } \right) du := I_1 + I_2.
\end{eqnarray*} 
It follows that, by the triangle inequality,
\begin{equation}
\label{boundingthediff}
\left| g_t(w+h)-g_t(w) - hg'_t(x)\right| \leqslant \vert I_1 \vert + \vert I_2 \vert.
\end{equation} 
Using \eqref{e:zumba}, we have 
\begin{equation}
\label{boundI1}
\vert I_1 \vert \leqslant \int_{0}^{h}\left(|w| +\frac{\sqrt{2\pi}}{4}\right)\vert u \vert du = \frac{h^2}{2}\left(|w| +\frac{\sqrt{2\pi}}{4}\right).
\end{equation} 
Furthermore, observe that 
\begin{eqnarray*}
\vert I_2 \vert &=& \one_{\left\lbrace h <0\right\rbrace }\left| \int_{0}^{h}\left(\one_{\left\lbrace w + u \leqslant t\right\rbrace } - \one_{\left\lbrace w \leqslant t\right\rbrace } \right) du \right| + \one_{\left\lbrace h \geq 0\right\rbrace }\left| \int_{0}^{h}\left(\one_{\left\lbrace w + u \leqslant t\right\rbrace } - \one_{\left\lbrace w \leqslant t\right\rbrace } \right) du \right| \\
&=& \one_{\left\lbrace h <0\right\rbrace }\left| -\int_{h}^{0}\one_{\left\lbrace w+u \leqslant t < w\right\rbrace }du \right| + \one_{\left\lbrace h \geq 0\right\rbrace }\left| -\int_{0}^{h}\one_{\left\lbrace w \leqslant  t <w + u \right\rbrace } du\right| \\
&=& \one_{\left\lbrace h <0\right\rbrace }\int_{h}^{0}\one_{\left\lbrace w+u \leqslant t < w\right\rbrace }du  + \one_{\left\lbrace h \geq 0\right\rbrace }\int_{0}^{h}\one_{\left\lbrace w \leqslant  t < w + u \right\rbrace } du.
\end{eqnarray*}
Bounding $u$ by $h$ in both integrals provides the following upper bound:
\begin{eqnarray}
\vert I_2 \vert & \leqslant & \one_{\left\lbrace h <0\right\rbrace }(-h)\one_{\left[  w+h , w\right)}(t)  + \one_{\left\lbrace h \geq 0\right\rbrace }h\one_{ \left[ w , w + h \right)}(t)\nonumber \\
& \leqslant & h\left(\one_{ \left[w , w + h \right)}(t) - \one_{\left[  w+h , w\right)}(t) \right)\label{boundI2} = \vert h \vert \left(\one_{ \left[ w , w + h \right)}(t) + \one_{\left[  w+h , w\right)}(t) \right).
\end{eqnarray}
Applying the estimates \eqref{boundI1} and \eqref{boundI2} to \eqref{boundingthediff} concludes the proof.
\end{proof}

An immediate consequence of Proposition \ref{p:stein} is that for $N\sim \mathcal{N}(0,1)$ and for every real-valued random variable $W$, one has that 
\begin{align}\label{e:upkol}
d_{K}(W,N)  =\sup_{t\in \mathbb{R}} |  \E g_{t}'(W)-Wg_{t}(W)|
\end{align}
{(observe in particular that convention \eqref{e:convention} defines unambiguously the quantity $g'_t(x)$ for every $t,x\in \mathbb{R}$) }.

}

\section{{ New} Berry-Esseen bounds { in the Kolmogorov\\ distance} }
 {\blue 
Let $n\geq 1$ be an integer, and consider a a vector $X = (X_1,...,X_n)$ of independent random variables with values in the Polish space $E$. Let $X'=(X_{1}',\dots ,X')$ be an independent copy of $X$. Consider a function $f : E^n \to \mathbb{R}$ such that $W:= f(X)$ is a centred and square-integrable random variable. We shall adopt the same notation introduced in Sections \ref{ss:difference}, \ref{ss:hoeffding}, \ref{ss:another} and \ref{s:stein}. For every $A\subsetneq [n]$, we write
\begin{align*}
T_{A}=\sum_{j\notin A}\Delta _{j}f(X,X')\Delta _{j}f(X^A,X')\\
T_{A}'=\sum_{j\notin A}\Delta _{j}f(X,X') | \Delta _{j}f(X^A,X') | 
\end{align*} 
and
\begin{align*}
T=\frac{1}{2}\sum_{A\subsetneq [n]}\kappa _{n,A}{T_{A}}, \\
T'=\frac{1}{2}\sum_{A\subsetneq [n]}\kappa _{n,A}{T_{A}'}. 
\end{align*} 
Observe that each $T_{A}'$ is a sum of symmetric random variables in such way that $0 = \E[T'] = \E[T'_A]$, $A\subsetneq [n]$.

\begin{remark}{\rm An immediate application of \eqref{e:d} implies that ${\bf Var}(f(X)) = \E[T]$. We stress that the random variables $T_A$ and $T$ already appear in \cite{Cha08}, in the context of normal approximations in the Wasserstein distance. Our use of the class of random objects $\{T', T'_A : A\subsetneq [n]\}$ for deducing bounds in the Kolmogorov distance is new.
}
\end{remark} 

The next statement is the main abstract finding of the paper.

\begin{theorem}
\label{thm:SchulteSchatterjeebound}
Let the assumptions and notation of the present section prevail, let $N\sim \mathcal{N}(0,1)$, and assume that $\E W = 0$ and $\E W^2 =\sigma^2\in (0,\infty)$. Then,
\begin{align}
\label{eq:abstract-intermed-bound}  d_{K}(\sigma^{-1} W,N)&\leqslant \frac{1}{\sigma^2} \sqrt{\var(\E(T | X))}+\frac{1}{\sigma^2}\sqrt{\var(\E\left( T'| X \right))}\\
&\quad\quad\quad \quad+\frac{1}{4\sigma ^{4}}\E\sum_{j,A,j\notin A} \kappa _{n,A}   | f(X) |   \left| \Delta _{j}f(X,X')  ^{2}    \Delta _{j}f(X^A,X')  \right| \notag\\ &\quad\quad\quad\quad\quad\quad\quad \quad\quad\quad\quad\quad \quad\quad\quad\quad\quad \quad\quad+ \frac{\sqrt{2\pi }}{16 \sigma^{3}}\sum_{j=1}^{n}\E  | \Delta _{j}f(X,X') | ^{3} \notag\\
\label{eq:abstract-bound}&\leqslant \frac{1}{\sigma^2} \sqrt{\var(\E(T | X))} +\frac{1}{\sigma^2}\sqrt{\var(\E\left( T'| X \right))}\\
 &\quad\quad\quad\quad\quad\quad+\frac{1}{4\sigma^3} \sum_{j=1}^{n}\sqrt{\E  | \Delta _{j}f(X,X') | ^{6}}+\frac{\sqrt{2\pi }}{16 \sigma^{3}}\sum_{j=1}^{n}\E  | \Delta _{j}f(X,X') | ^{3} .\notag
\end{align}
\end{theorem}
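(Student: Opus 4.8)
The plan is to deduce the bound from the Stein identity \eqref{e:upkol}, namely $d_K(\sigma^{-1}W,N) = \sup_t |\E[g_t'(\sigma^{-1}W) - \sigma^{-1}W\, g_t(\sigma^{-1}W)]|$, by rewriting the term $\sigma^{-1}W g_t(\sigma^{-1}W)$ using the covariance representation \eqref{e:d} with $g(X) = W = f(X)$. Set $\widetilde W = \sigma^{-1}W$. First I would observe that, since $\E W = 0$, we have $\E[\widetilde W g_t(\widetilde W)] = \sigma^{-2}\,\cov(f(X), f(X) g_t(\sigma^{-1}f(X)))$, and then apply \eqref{e:d} with the second function being $x\mapsto f(x)$ and the first being $x\mapsto f(x) g_t(\sigma^{-1}f(x))$. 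This yields
\[
\E[\widetilde W g_t(\widetilde W)] = \frac{1}{2\sigma^2}\sum_{A\subsetneq[n]}\kappa_{n,A}\sum_{j\notin A}\E\big[\Delta_j f(X,X')\,\Delta_j\big(f\,g_t(\sigma^{-1}f)\big)(X^A,X')\big].
\]
The core of the argument is then to control the inner difference $\Delta_j(f g_t(\sigma^{-1}f))(X^A,X')$, which by definition equals $f(X^A)g_t(\sigma^{-1}f(X^A)) - f(X^{A\cup\{j\}})g_t(\sigma^{-1}f(X^{A\cup\{j\}}))$. Writing $f(X^{A\cup\{j\}}) = f(X^A) - \Delta_j f(X^A,X')$ and applying the approximate Taylor expansion \eqref{eq:stein2} of Proposition \ref{p:stein}(f) with $w = \sigma^{-1}f(X^A)$ and $h = -\sigma^{-1}\Delta_j f(X^A,X')$, I would replace $g_t(\sigma^{-1}f(X^{A\cup\{j\}}))$ by $g_t(\sigma^{-1}f(X^A)) - \sigma^{-1}\Delta_j f(X^A,X')\, g_t'(\sigma^{-1}f(X^A))$ up to an explicit error controlled by $\tfrac12 \sigma^{-2}(\Delta_j f(X^A,X'))^2(|\sigma^{-1}f(X^A)| + \tfrac{\sqrt{2\pi}}{4})$ plus an indicator term. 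I would also use the bound $|f g_t(\sigma^{-1}f) - f' g_t(\sigma^{-1}f')| \le (|w| + \tfrac{\sqrt{2\pi}}{4})(|u|+|v|)$ from \eqref{e:zumba} to handle the combined "$(w+u)g_t(w+u) - (w+v)g_t(w+v)$" structure cleanly. After substituting and using $\sum_{A\subsetneq[n]: j\notin A}\kappa_{n,A} = 1$ together with $\sum_A \tfrac12\kappa_{n,A}\sum_{j\notin A}\E[\Delta_j f(X,X')\Delta_j f(X^A,X')\,\sigma^{-1}g_t'(\sigma^{-1}f(X^A))]$, the goal is to reorganise terms so that $\E g_t'(\widetilde W)$ cancels against the "leading" contribution up to the variance-type remainders.

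The crucial manipulation is the following: the leading term produced by the Taylor expansion is $\tfrac{1}{2\sigma^3}\sum_A\kappa_{n,A}\sum_{j\notin A}\E[\Delta_j f(X,X')\Delta_j f(X^A,X') g_t'(\sigma^{-1}f(X^A))]$. Here I want to argue that, to leading order, one may replace $g_t'(\sigma^{-1}f(X^A))$ by $g_t'(\widetilde W)$: using again \eqref{e:zumba}/the fact that $|g_t'|\le 1$ and a further Taylor step, or more efficiently using that $\sum_j \Delta_j f(X,X')\Delta_j f(X^A,X')$ aggregated over $A$ with weights $\tfrac12\kappa_{n,A}$ has conditional expectation $\E[T\mid X]$ whereas the relevant error has the structure of $\E[T'\mid X]$. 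Precisely, I expect the two $\sqrt{\var}$ terms to arise as follows: write $\E[g_t'(\widetilde W)] = \sigma^{-2}\E[g_t'(\widetilde W)\, T] $ is NOT quite right — rather, one uses $\E[g_t'(\widetilde W) \E[T\mid X]] = \sigma^2 \E[g_t'(\widetilde W)]$ since $\E[T\mid X]$ has mean $\sigma^2$... and then bounds $|\E[g_t'(\widetilde W)(T - \sigma^2)]| = |\E[g_t'(\widetilde W)(\E[T\mid X] - \sigma^2)]| \le \|g_t'\|_\infty \sqrt{\var(\E[T\mid X])} \le \sqrt{\var(\E[T\mid X])}$, using $\E[g_t'(\widetilde W)]\cdot 0$ centering and Cauchy–Schwarz; the $T'$ term arises identically from the absolute-value modification $T'_A$ needed to dominate the indicator error terms $\one_{[w,w+h)}(t)$ appearing in \eqref{eq:stein2}, because those indicators have the wrong sign structure and must be bounded by $|h|$, turning $\Delta_j f(X^A,X')$ into $|\Delta_j f(X^A,X')|$ and hence $T_A$ into $T'_A$. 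The third term in \eqref{eq:abstract-intermed-bound} is exactly the accumulated quadratic Taylor remainder $\tfrac{1}{4\sigma^4}\E\sum_{j,A,j\notin A}\kappa_{n,A}|f(X)|\,|\Delta_j f(X,X')|^2 |\Delta_j f(X^A,X')|$ after bounding $|\sigma^{-1}f(X^A)|$-type factors appropriately (replacing $f(X^A)$ by $f(X)$ at the cost of the extra terms, or carrying $f(X^A)$ and noting it does not matter under expectation up to the diagonal $j$-terms collected separately), and the fourth term $\tfrac{\sqrt{2\pi}}{16\sigma^3}\sum_j \E|\Delta_j f(X,X')|^3$ is the contribution of the $\tfrac{\sqrt{2\pi}}{4}$ constant in \eqref{eq:stein2}, where the single sum over $j$ (rather than the double sum over $(j,A)$) appears because $\sum_{A: j\notin A}\kappa_{n,A} = 1$ collapses the $A$-sum. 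Finally, passing from \eqref{eq:abstract-intermed-bound} to \eqref{eq:abstract-bound} is a routine application of Cauchy–Schwarz in the form $\E[|f(X)|\,|\Delta_j f(X,X')|^2|\Delta_j f(X^A,X')|] \le (\E f(X)^2)^{1/2}(\E|\Delta_j f(X,X')|^4|\Delta_j f(X^A,X')|^2)^{1/2}$ followed by $\E[A^4 B^2]\le (\E A^6)^{2/3}(\E B^6)^{1/3}$ and symmetry $\Delta_j f(X^A, X') \equlaw \Delta_j f(X,X')$ (componentwise equality in distribution since $X^A$ and $X$ have the same law), together with $\sum_{A:j\notin A}\kappa_{n,A}=1$, to reduce everything to $\sum_j (\E|\Delta_j f(X,X')|^6)^{1/2}$; I would need to be slightly careful that $\sigma$ in the denominator comes out as $\sigma^3$ and not $\sigma^4$ after using $\E f(X)^2 = \sigma^2$.

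The main obstacle I anticipate is the bookkeeping around the term $g_t'(\sigma^{-1}f(X^A))$ versus $g_t'(\widetilde W) = g_t'(\sigma^{-1}f(X))$: making the replacement rigorous requires either (i) a secondary Taylor expansion of $g_t'$, which is problematic since $g_t'$ is only bounded (not Lipschitz with a useful constant) and has a jump at $t$, or (ii) a clever reorganisation exploiting that $\E[T\mid X]$ and $\E[T'\mid X]$ already encode the right conditional structure so that the difference between $g_t'(\sigma^{-1}f(X^A))$ and $g_t'(\widetilde W)$ is never actually needed — one keeps the argument of $g_t'$ as $\widetilde W$ from the start by choosing the Taylor basepoint to be $\widetilde W$ rather than $\sigma^{-1}f(X^A)$. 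Concretely, I would apply \eqref{eq:stein2} with $w = \widetilde W$ and increments $h$ chosen so that $w + h = \sigma^{-1}f(X^A)$ and $w + h' = \sigma^{-1}f(X^{A\cup\{j\}})$ on each side, expanding both $f(X^A)g_t(\sigma^{-1}f(X^A))$ and $f(X^{A\cup\{j\}})g_t(\sigma^{-1}f(X^{A\cup\{j\}}))$ around the common point $\widetilde W$ — this is exactly why \eqref{e:zumba} is stated for general $u,v$ and why \eqref{eq:stein2} is the natural tool. The differences $f(X^A) - f(X)$ are themselves telescoping sums of $\Delta_i f$ terms, so the error bookkeeping multiplies difference operators; controlling the resulting multi-index sums and showing they collapse to the clean single and double sums in the statement (using $\sum_A\kappa_{n,A} = 1$ repeatedly and symmetry) is where the genuine care lies. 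Everything else — Cauchy–Schwarz, the $\|g_t\|_\infty \le \tfrac{\sqrt{2\pi}}{4}$ and $\|g_t'\|_\infty \le 1$ bounds, centering to produce the $\sqrt{\var(\E[\,\cdot\,\mid X])}$ quantities — is standard Stein's-method routine.
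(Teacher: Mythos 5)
Your overall route --- the Stein identity \eqref{e:upkol}, the covariance decomposition \eqref{e:d}, the approximate Taylor expansion of Proposition~\ref{p:stein}(f) --- is the paper's, and your reading of where each of the four summands comes from is essentially correct, but the opening algebraic step is wrong. Since $\E W=0$,
\begin{align*}
\E\bigl[\widetilde W g_t(\widetilde W)\bigr]=\sigma^{-1}\E\bigl[f(X)\,g_t(\sigma^{-1}f(X))\bigr]=\sigma^{-1}\cov\bigl(f(X),\,g_t(\sigma^{-1}f(X))\bigr),
\end{align*}
not $\sigma^{-2}\cov\bigl(f(X),\,f(X)g_t(\sigma^{-1}f(X))\bigr)$; your right-hand side carries an extra factor of $f(X)$ and the two sides do not agree. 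Consequently the object you set out to control, $\Delta_j\bigl(f\cdot g_t\circ\sigma^{-1}f\bigr)(X^A,X')$, is not the one that actually arises, and everything that flows from it --- the use of \eqref{e:zumba} to handle $(w+u)g_t(w+u)-(w+v)g_t(w+v)$, the proposed double expansion around $\widetilde W$, the telescoping of $f(X^A)-f(X)$ into sums of $\Delta_i f$ --- is aimed at a problem of your own making.

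The ``main obstacle'' you flag, namely $g_t'(\sigma^{-1}f(X^A))$ versus $g_t'(\widetilde W)$, disappears once the roles in \eqref{e:d} are assigned the other way around: take $g=g_t\circ(\sigma^{-1}f)$, so that in the notation of \eqref{e:d} the factor $\Delta_j g(X,X')=g_t(\widetilde W)-g_t\bigl(\widetilde W-\sigma^{-1}\Delta_jf(X,X')\bigr)$ carries the Stein solution and expands around the common basepoint $\widetilde W$ with the single increment $h=-\sigma^{-1}\Delta_j f(X,X')$, while only the companion factor $\Delta_j f(X^A,X')$ lives on $X^A$. There is then no telescoping, no secondary expansion, and the leading Taylor term recombines directly to $g_t'(\widetilde W)\cdot T$. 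With that single correction your accounting is accurate: the indicator terms in \eqref{eq:stein2}, after the sign identity $h(\one_{[w,w+h)}(t)-\one_{[w+h,w)}(t))=-h(\one_{\{w>t\}}-\one_{\{w+h>t\}})$ and the distributional symmetry that converts $\Delta_j\one_{\{f(X)>t\}}$ into $2\one_{\{f(X)>t\}}$, produce the $\sqrt{\var(\E(T'\mid X))}$ term; the quadratic remainder gives the third summand; the $\sqrt{2\pi}/4$ constant together with the H{\"o}lder step $\E|\Delta_jf(X,X')|^2|\Delta_jf(X^A,X')|\leqslant\E|\Delta_jf(X,X')|^3$ and $\sum_{A:j\notin A}\kappa_{n,A}=1$ gives the fourth; and the Cauchy--Schwarz passage to \eqref{eq:abstract-bound} is as you describe. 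One further slip: $\E[g_t'(\widetilde W)\E(T\mid X)]=\sigma^2\E g_t'(\widetilde W)$ is false, since $\E(T\mid X)$ is random; the correct reasoning, which you then supply, is that $\E[g_t'(\widetilde W)(T-\sigma^2)]=\E[g_t'(\widetilde W)(\E(T\mid X)-\sigma^2)]$ by $X$-measurability, and this is bounded by $\sqrt{\var(\E(T\mid X))}$ using $\|g_t'\|_\infty\leqslant 1$ and Cauchy--Schwarz.
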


\begin{proof}
By homogeneity, we can assume that $\sigma=1$, without loss of generality. By virtue of \eqref{e:upkol}, the Kolmogorov distance between $W$ and $N$ is the supremum over $t\in [0,1]$ of
\begin{align}
\label{eq:11}
 |\E  g_{t}'(W)-Wg_{t}(W)|& \leqslant  \E | g_{t}'(W)-g_{t}'(W)T |+ |\E (g_{t}(W)W- g_{t}'(W)T) |,
\end{align}
where the derivative $g'_t(w)$ is defined for every real $w$, thanks to the convention \eqref{e:convention}.
Since $W$ is $\sigma(X)$-measurable, $ | g_{t}' | \leqslant 1$ and $\E T=\E W^{2}=1$, one infers that
\begin{align*}
\E | g_{t}'(W)-g_{t}'(W)T |&\leqslant \E  [| g_{t}'(W) \times  \E[ T-1 \;| \; X] | ]\leqslant \E  | \E  [ T-1   \;| \;  X] |  \leqslant\sqrt{ \var(\E(T | X))}.
\end{align*} 
Our aim is now to show that the quantity $|\E (g_{t}(W)W- g_{t}'(W)T) |$ is bounded by the last three summands on the right-hand side of \eqref{eq:abstract-intermed-bound} (with $\sigma=1$). Reasoning as in \cite{Cha08}, the relation (\ref{e:d}) applied to $\E g_{t}(W)W$ and the definition of $T$ yield

\begin{align*}
 |  \E g_t (W)W-g_t '(W)T|&=  \left|  \frac{1}{2}\sum_{A\subsetneq [n] }\kappa _{n,A}\sum_{j\notin A}\E (R_{A,j}-\tilde R_{A,j})\right|  \\
 &\leqslant  \frac{1}{2}\sum_{A\subsetneq [n] }\kappa _{n,A}\sum_{j\notin A}\E | R_{A,j}-\tilde R_{A,j}|,
\end{align*}
 with
\begin{align*}
R_{A,j}&=\Delta _{j}((g_t \circ f)(X))\Delta _{j}f(X^A),\\
\tilde R_{A,j}&=g_t '(f(X))\Delta _{j}f(X)\Delta _{j}f(X^A),
\end{align*}
where, here and for the rest of the proof, we use the simplified notation $\Delta _{j}f(X^A) = \Delta _{j}f(X^A,X')$, $\Delta _{j}f(X) = \Delta _{j}f(X,X')$, and so on. We have
\begin{align*}
  \E | R_{A,j}-\tilde R_{A,j} |  & = \E \big[ | g_{t}(f(X)- \Delta _{j}f(X))-g_{t}(f(X))- g_{t}'(f(X))( -\Delta _{j}f(X)  ) | \times | \Delta _{j}f(X^A ) | \big].
\end{align*}
Now we use (\ref{eq:stein2}) with $w=f(X), h= - \Delta _{j}f(X)$, together with the fact that  
\begin{align*}
h\left(\one_{ \left[ w , w + h \right)}(t) - \one_{\left[  w+h , w\right)}(t)\right)
 = - h (\one_{\{w>t\}} - \one_{\{w+h>t\} })
\end{align*}
to deduce that
\begin{align}
\label{eq:bound1}
 |  \E [g_t (W)W-g_t '(W)T] |\leqslant \frac{1}{2} \E \sum_{j,A,j\notin A} \kappa _{n,A}&\Big\{ \big(  | f(X)  | +\sqrt{2\pi }/{4} \big)\frac{ | \Delta _{j}f(X) | ^{2} | \Delta _{j}f(X^A) |}{2}  \\
\nonumber &  + \Delta _{j}\left(\one_{f(X)>t}\right)\Delta _{j}f(X) \left| \Delta _{j}f(X^A) \right|  \Big\}.
\end{align}
Using the independence of $X$ and $X'$, one proves immediately that, for $j\notin A$,
\begin{align*}
\E \Delta _{j}\left(\one_{f(X)>t}\right)\Delta _{j}f(X) \left| \Delta _{j}f(X^A) \right| = 2\E \one_{f(X)>t} \Delta _{j}f(X) \left| \Delta _{j}f(X^A) \right|,
\end{align*}
from which it follows that the right-hand side of (\ref{eq:bound1}) is bounded by 
\begin{align*}
&\frac{1}{4}\E\sum_{j,A,j\notin A} \kappa _{n,A}  \left(  | f(X) | +\frac{\sqrt{2\pi }}{4} \right)  \left| \Delta _{j}f(X)  ^{2}    \Delta _{j}f(X^A)  \right|  +\left | \E\left[  \one_{f(X)>t}\times T'  \right] \right  | \\
&\leqslant \frac{1}{4}\E\sum_{j,A,j\notin A} \kappa _{n,A}  \left(  | f(X) | +\frac{\sqrt{2\pi }}{4} \right)  \left| \Delta _{j}f(X)  ^{2}    \Delta _{j}f(X^A)  \right| +\sqrt{{\bf Var} ({\bf E} (T'\, |\, X))},
\end{align*}
where we have applied the Cauchy-Schwartz inequality, together with the fact that indicator functions are bounded by 1. The  bound (\ref{eq:abstract-intermed-bound}) is obtained by using the H\"older inequality in order to deduce that, for all $j,A$,
\begin{align*}
 \E| \Delta _{j}f(X) |^{2}  | \Delta _{j}f(X^A) |&\leqslant \E  | \Delta _{j}f(X) | ^{3},
\end{align*}and (\ref{eq:abstract-bound}) follows by 
\begin{align*}
  \E  | f(X) |  | \Delta _{j}f(X) | ^{2} | \Delta _{j}f(X^A) | &\leqslant \sqrt{\E f(X)^{2}}\sqrt{\E  \Delta _{j}f(X)^{4}\Delta _{j}f(X^A)^{2}}\\
&\leqslant  \sqrt{( \E \Delta _{j}f(X)^{4(3/2)})^{2/3} (\E \Delta _{j}f(X^A))^{2(3)})^{1/3} }\leqslant ( \E \Delta _{j}f(X)^{6})^{1/2},\\
\end{align*}
where we have used the fact that $X$ and $X^A$ have the same distribution.
\end{proof}

\begin{remark}{\rm

Recall that the {\it Wasserstein distance} between the laws of two real-valued random variables $U,V$ is defined as
$$
d_{W}(U,V) := \sup_h \left| \E[h(U)] - \E[h(V)] \right|,  
$$
where the supremum runs over all 1-Lipschitz functions $h : \mathbb{R} \to \mathbb{R}$. In \cite[Theorem 2.2]{Cha08}, one can find the following bound: under the assumptions of Theorem \ref{thm:SchulteSchatterjeebound},
\begin{equation}
\label{e:souravbound}  d_{W}(W,N)\leqslant \frac{1}{\sigma^2} \sqrt{\var(\E(T | X))}+\frac{1}{2\sigma^3}\sum_{j=1}^{n}\E  | \Delta _{j}f(X,X') | ^{3}.
\end{equation}
}
\end{remark} 
 
\begin{example}{\rm Consider a vector $X =(X_1,...,X_n)$ of i.i.d. random variables with mean zero and variance 1, and assume that $\E | X_1|^4<\infty$. Define $W = f(X) = n^{-1/2}(X_1+\cdots +X_n)$. It is easily seen that, in this case, for every $j\notin A$, $\Delta_j f(X^{A}, X') = n^{-1/2} (X_j - X'_j)$, in such a way that
$$
T = \frac{1}{2n} \sum_{j=1}^n (X_j-X'_j)^2 \quad \mbox{and} \quad T' = \frac{1}{2n} \sum_{j=1}^n {\rm sign}(X_j-X'_j)(X_j-X'_j)^2.
$$
We also have, denoting $\hat X^j$ the vector $X$ after removing $X_{j}$,
\begin{align*}
\E  | f(X)\Delta _{j}f(X)^{2}\Delta _{j}f(X^{A}) |& \leqslant \E  | f(X)-f(\hat X^j) | |  \Delta _{j}f(X)^{2}\Delta _{j}f(X^{A}) | +\E  | f( \hat X^j) | \E  | \Delta _{j}f(X)^{2} |  | \Delta_{j}f(X^{A}) | \\
&\leqslant \E n^{-2} | X_{j} | | X_{j}-X_{j}' |^{2} | X_{j}-X_{j}' |   +\E | f(\hat X^{j}) |\E n^{-3/2} | X_{j}-X_{j}' |^{2} | X_{j}-X_{j}' |   \\
&\leqslant  8(n^{-2}\E X_{j}^{4}+n^{-3/2}\E X_{j}^{3}).
\end{align*}
({\blu note that} the bound \eqref{eq:abstract-bound} can be used instead, whenever $\E X_{1}^{6}<\infty $).
An elementary application of \eqref{eq:abstract-intermed-bound} yields therefore that there exists a finite constant $C>0$, independent of $n$, such that
$$
d_K(W,N)\leqslant \frac{C}{\sqrt{n}},
$$
providing a rate of convergence that is consistent with the usual Berry-Esseen estimates. One should notice that the estimate \eqref{e:souravbound} yields the similar bound $d_{W}(W,N)\leqslant C/\sqrt{n}$.

}

\end{example}

} 
 
\section{Symmetric functions and geometric applications}

In this section we adapt our results to random structures with local dependence, {in a spirit close} to \cite[Section 2.3]{Cha08} -- see Remark \ref{r:zezz} below. {Our principal focus will be on measurable and symmetric real-valued mappings $f$ on $E^{n}$: we recall that $f : E^n \to \mathbb{R}$ is said to be {\it symmetric} if 
\begin{align*}
f(x_{\sigma (1)},\dots ,x_{\sigma (n)})=f(x_{1},\dots ,x_{n})
\end{align*} 
for any permutation $\sigma $ of $[n]$ and vector $x\in E^{n}$.


\smallskip

In the following, $X$ and $X'$ denote two independent sets of $n$ i.i.d. random variables with {\blu common} generic distribution $\mu $. We will use the following short-hand notation: for any random vector $Z$ {of dimension $n$}, and for every $1\leqslant i\neq j\leqslant n$,
\begin{align*}
\Delta _{i}f(Z):=\Delta _{i}f(Z,X'),\; \Delta_{i,j}f(Z){\orange :=\Delta _{i}\Delta_j f(Z,X')},
\end{align*}
where the notation is the same as in Section \ref{ss:difference}; we also adopt the additional convention that $\Delta _{i,i}=\Delta _{i}$. Now let $\tilde X$ be a further independent copy of $X$. We shall use the following terminology: a vector $Z=(Z_1,...,Z_n)$ is a {\it recombination} of $\{X,X',\tilde X\}$, if $Z_{i}\in \{X_{i},X'_{i},\tilde X_{i}\}$ for every $1\leqslant i\leqslant n$.

\smallskip

The next statement provides a bound for the normal approximation of geometric functionals that is amenable to geometric analysis, {\blu and can be heuristically regarded as the binomial counterpart to the second order Poincar\'e inequalities on the Poisson space (in the Kolmogorov distance), proved in \cite{LPS}}.

\begin{theorem}
  \label{thm:dependancy}
  Let $f:E^{n}\to \mathbb{R}$ be a symmetric measurable functional such  that $W=f(X)$ is centred, and $ \sigma ^{2}=\var(W)<\infty $.  Let $N$ be a centred Gaussian random variable with variance 1. Define
\begin{align*}
 \coef&:=\sup_{(Y,Z,Z')}\E\left[ \mathbf{1}_{\{\Delta _{1,2}f(Y)\neq 0\}}\Delta _{1}f(Z)^{2}\Delta _{2}f(Z')^{2} \right],  \\
   \coeff&:=\sup_{(Y,Y',Z,Z')}\E\left[ \mathbf{1}_{\{\Delta _{1,2}f(Y)\neq 0,\Delta _{1,3}f(Y')\neq 0\}}\Delta _{2}f(Z)^{2}\Delta _{3}f(Z')^{2} \right],  \\
\end{align*}where the suprema run over all vectors  $Y,Y',Z,Z'$ that are recombinations of $\{X,X',\tilde X\}$. Then,
\begin{align}
\label{eq:graph-bound} 
d_{K}( \sigma^{-1}W, N) & \leqslant \left[\frac{{\rosso 4}\sqrt{2}  n^{1/2}}{\sigma^2} \left( \sqrt{n\coef  } +\sqrt{n^{2}\coeff}+ \sqrt{\E \Delta _{1}f (X)^{4}}\right) \right.\\
\notag &\left. \hspace{2cm}+{\blu \frac{n}{4\sigma^4}}\sup_{A\subseteq [n] } \E  | f(X)  {\blu    \Delta  _{1}f(X^{A}) ^{3} }  | +\left(  \frac{ \sqrt{{\blu2} \pi } }{16\sigma ^{3}}n\E  |   \Delta  _{1}f(X )^{3} |  \right)   \right] .
\end{align}  
\end{theorem}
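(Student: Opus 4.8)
The strategy is to specialize the abstract bound \eqref{eq:abstract-bound} (and partly \eqref{eq:abstract-intermed-bound}) of Theorem \ref{thm:SchulteSchatterjeebound} to the symmetric i.i.d.\ setting, estimating each of the four summands in terms of the local quantities $\coef$ and $\coeff$. The last two summands transfer almost verbatim: using symmetry and the fact that all $X_i$ (and $X^A$) have law $\mu$, the term $\frac{\sqrt{2\pi}}{16\sigma^3}\sum_j \E|\Delta_j f(X,X')|^3$ collapses to $\frac{\sqrt{2\pi}}{16\sigma^3}\, n\, \E|\Delta_1 f(X)|^3$, and the term controlling $\sum_{j,A}\kappa_{n,A}\E|f(X)\,\Delta_j f(X)^2\Delta_j f(X^A)|$ is handled by bounding $|f(X)\Delta_j f(X)^2\Delta_j f(X^A)|\le \frac12|f(X)|(|\Delta_j f(X^A)|^3+\text{lower order})$ or, more directly, by the crude bound $\E|f(X)\,\Delta_j f(X)^2\,\Delta_j f(X^A)|\le \sup_{A}\E|f(X)\,\Delta_1 f(X^A)^3|$ after an application of H\"older in the spirit of the penultimate display in the proof of Theorem \ref{thm:SchulteSchatterjeebound}, using $\sum_{A:j\notin A}\kappa_{n,A}=1$ and symmetry to reduce the double sum $\sum_{j,A}$ to a factor of $n$.

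\textbf{The main work: the two variance terms.} The heart of the argument is bounding $\var(\E(T\mid X))$ and $\var(\E(T'\mid X))$ by $O(n^3\coef)+O(n^4\coeff)+O(n^2\,\E\Delta_1 f(X)^4)$. I would treat $T$ and $T'$ simultaneously, since $T'$ is obtained from $T$ by inserting absolute values which does not affect the bounds on second moments. Write $\E(T\mid X)=\frac12\sum_{A\subsetneq[n]}\kappa_{n,A}\sum_{j\notin A}\E[\Delta_j f(X)\Delta_j f(X^A)\mid X]$. To bound its variance I would use the Efron--Stein / Hoeffding philosophy in a more hands-on way: apply the martingale or Hoeffding-type inequality $\var(\E(T\mid X))\le \sum_{i=1}^n \E[(\Delta_i^{(X'')}\E(T\mid X))^2]$ where $\Delta_i^{(X'')}$ resamples the $i$-th coordinate of $X$ via yet another independent copy $X''$ (this is the second-order Poincar\'e mechanism referred to in the paragraph preceding the theorem, mirroring \cite{LPS}). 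A summand $\Delta_j f(X)\Delta_j f(X^A)$ depends on coordinate $i$ of $X$ only through $\Delta_i$-type differences, so resampling $X_i$ produces a term that vanishes unless a second-order difference $\Delta_{i,j}f(\cdot)$ is nonzero — this is precisely where the indicators $\one_{\{\Delta_{1,2}f(Y)\neq 0\}}$ enter. Expanding the square, the diagonal contributions (same pair of summands, indexed by $j$) give, after bounding $\Delta_j f$-factors by their fourth moments and using the indicator $\one_{\{\Delta_{i,j}f\neq 0\}}$, a contribution of order $n\cdot n\cdot\coef=n^2\coef$ per resampled coordinate $i$, hence $n^3\coef$ after summing over $i$; the off-diagonal contributions (two distinct summands indexed by $j\ne k$, both sensitive to the resampled $X_i$) carry two indicators $\one_{\{\Delta_{i,j}f\neq0\}}\one_{\{\Delta_{i,k}f\neq0\}}$ and four $\Delta f$-factors, matching the definition of $\coeff$, and produce order $n\cdot n\cdot n\cdot\coeff=n^3\coeff$ per $i$, hence $n^4\coeff$ after summing over $i$. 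The stray $\E\Delta_1 f(X)^4$ term accounts for the case where the resampled coordinate $i$ equals $j$ (so no genuine second-order difference is available and one just uses a fourth moment). Taking square roots and collecting the constants (the $4\sqrt2$ and the $\sqrt n$ prefactor) yields the first bracketed group in \eqref{eq:graph-bound}.

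\textbf{Bookkeeping and the expected obstacle.} Throughout I would use repeatedly that $X$, $X^A$, and any recombination of $\{X,X',\tilde X\}$ have the same law, so that every expectation of a product of $\Delta f$-factors can be bounded by a supremum over recombinations — this is exactly the role of the suprema in the definitions of $\coef$ and $\coeff$ and is what makes the final bound "amenable to geometric analysis". Care must be taken that the coefficients $\kappa_{n,A}$ are correctly absorbed: one uses $\sum_{A:j\notin A}\kappa_{n,A}=1$ and $\kappa_{n,A}\le \kappa_{n,A'}$-type monotonicity only where needed, and Cauchy--Schwarz on the sums over $A$ must not lose powers of $n$ — this is the point requiring the most care, since a careless split can inflate $n^4$ to $n^5$. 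I expect the main obstacle to be precisely this combinatorial accounting: correctly identifying, when one resamples $X_i$ and expands $(\E(T\mid X)-\E(T\mid X^{(i)}))^2$, which pairs of summands $(j,A),(k,B)$ actually contribute, checking that the surviving terms are controlled by a single second-order indicator (for the diagonal, $\coef$) or a pair of them sharing the index $i$ (for the off-diagonal, $\coeff$), and verifying that the powers of $n$ come out as $n^3$ and $n^4$ rather than anything larger. Once the dependency structure is pinned down, each individual expectation is bounded by H\"older together with the law-invariance under recombination, and the proof concludes by assembling the four pieces.
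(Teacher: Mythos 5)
Your high-level strategy is essentially the paper's: specialize the abstract bound of Theorem \ref{thm:SchulteSchatterjeebound}, treat the last two summands by H\"older and symmetry, and attack the two variance terms by an Efron--Stein argument in a fresh independent copy $\tilde X$, with a case analysis on how the resampled coordinate $i$ relates to the summation indices $j,k$. The one structural difference is the order of operations: the paper first applies the Minkowski-type inequality of \cite[Lemma~4.4]{Cha08},
\[
\sqrt{\var\big(\E(T\mid X)\big)}\ \leqslant\ \tfrac12\sum_{A\subsetneq[n]}\kappa_{n,A}\,\sqrt{\var\big(\E(T_A\mid X)\big)}\ \leqslant\ \tfrac12\sum_{A\subsetneq[n]}\kappa_{n,A}\,\sqrt{\E\big(\var(T_A\mid X')\big)},
\]
and only then runs Efron--Stein on each $T_A$ conditionally on $X'$, whereas you propose to apply Efron--Stein directly to $\E(T\mid X)$ as a function of $X$. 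Both routes can in principle reach the same estimate.

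The genuine gap is in your power counting, and it is not a cosmetic slip but exactly the ``combinatorial accounting'' you flag as the expected obstacle. You announce contributions of order $n^{3}\coef$ and $n^{4}\coeff$ to $\var(\E(T\mid X))$. After taking the square root and multiplying by the $n^{1/2}$ prefactor, those orders would give $n^{1/2}\sqrt{n^{2}\coef}$ and $n^{1/2}\sqrt{n^{3}\coeff}$ --- a bound weaker by a factor $\sqrt n$ than \eqref{eq:graph-bound}, whose variance contributions are $n^{2}\coef$ and $n^{3}\coeff$. The discrepancy traces to how the weights $\kappa_{n,A}$ are absorbed. The whole point of these weights is that they produce cancellations invisible to a crude ``count the sums'' argument: one must use the identities
\[
\sum_{A\subsetneq[n]:\,j\notin A}\kappa_{n,A}=1,\qquad
\sum_{A\subsetneq[n]}\kappa_{n,A}(n-|A|)=n,\qquad
\sum_{A\subsetneq[n]}\kappa_{n,A}\sqrt{n-|A|}\leqslant 2\sqrt n,
\]
exactly as the paper does after squaring and summing $\sum_i\E(\tilde\Delta_i T_A)^2\leqslant 16n\big(\mathbf{1}_{\{1\notin A\}}\E\Delta_1 f(X)^4+2(n-|A|)\coef+(n-|A|)^2\coeff\big)$. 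If you instead treat the sum over subsets $A$ as contributing a factor of $n$ per index (which is what the arithmetic ``$n\cdot n\cdot\coef$ per $i$'' suggests), you overcount by $n$. Your direct Efron--Stein route can be made to give the correct orders, but only if you expand $\big(\sum_{A,j}\kappa_{n,A}\cdots\big)^2$ as a double sum over $(A,j),(B,k)$ and then apply the above identities together with Cauchy--Schwarz on the $A,B$ sums; as written, the proposal does not exhibit this, and the claimed constants do not assemble to \eqref{eq:graph-bound}. A secondary gap is that passing Efron--Stein through the conditional expectation $\E(T\mid X)$ needs an explicit Jensen step (to reduce $\E[(\tilde\Delta_i\E(T\mid X))^2]$ to $\E[(\tilde\Delta_i T)^2]$), and the reduction of all resulting expectations to suprema over recombinations of $\{X,X',\tilde X\}$ (so that they are genuinely bounded by $\coef,\coeff$) requires the change-of-variable $\bar X \stackrel{(d)}{=} X'$ that the paper sets up explicitly; neither step is present in the plan.
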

 \begin{remark}
We shall often use the following bounds, following at once from the Cauchy-Schwartz inequality,
\begin{align}
\label{eq:coeff-CS}
\notag\coeff& \leqslant \sup_{(Y,Y',Z,Z')}\sqrt{ \E\left[ \mathbf{1}_{\{\Delta _{1,2}f(Y)\neq 0,\Delta _{1,3}f(Y')\neq 0\}}\Delta _{2}f(Z)^{4} \right]\E\left[ \mathbf{1}_{\{\Delta _{1,2}f(Y)\neq 0,\Delta _{1,3}f(Y')\neq 0\}}\Delta _{3}f(Z')^{4} \right]}\\
&\leqslant\sup_{(Y,Y',Z)} \E\mathbf{1}_{\{\Delta _{1,2}f(Y)\neq 0,\Delta _{1,{\rosso 3}}f(Y')\neq 0\}}\Delta _{\blu 2}f(Z)^{\blu 4}
\end{align}and
\begin{align}
\label{eq:coef-CS}
\coef\leqslant\sup_{(Y,Z)} \E \left[ \mathbf{1}_{\{\Delta _{1,2}f(Y)\neq 0\}}\Delta _{1}f(Z)^{4} \right].
\end{align}{\blu In the framework of} the applications developed in this paper, such estimates {\blu simplify} some computations and do not {\blu worsen the associated rates of convergence}.
\end{remark}

  In the applications developed below, we will often consider functions $f$ that are obtained as restrictions to $E^n$ of general real-valued mappings on the set $\cup _{n\geq 1}E^{n}$, corresponding to the class of all finite ordered point configurations (with possible repetitions). Now fix $f : \cup _{n\geq 1}E^{n}\to \mathbb{R}$ and, for every $n\geq 1$ and every $x= (x_1,...,x_n) \in E^n$, introduce the notation $\hat{x}^{ i}$ to indicate the element of $E^{n-1}$ obtained by deleting the $i$th coordinate of $x$, that is: $\hat{x}^{ i} = (x_1,...,x_{i-1},x_i,...,x_n)$. {\blu Analogously, write} $\hat{x}^{ij}\in E^{n-2}$ to denote the vector obtained from $x$ by removing its $i$-th and $j$-th coordinates.
We write
\begin{align*}
D_{i}f(x)&=f(x)-f(\hat{x}^{ i}),\\
D_{i,j}f(X)&=f(x)-f(\hat x^{i})-f(\hat x^{j})+f(\hat x^{ij})=D_{j,i}f(x).
\end{align*}
 
 \begin{proposition}
 \label{r:zee}
 Let $f$ be a functional defined on $\cup _{k\leqslant n}E^{k}$ such that its restriction to $E^{n}$ satisfies the hypotheses of Theorem \ref{thm:dependancy}. Then we have 
\begin{align*}
B'_{n}(f)&\leqslant { 2^{8}}\sup_{(Y,Y',Z,Z')}\E \left[\mathbf{1}_{\{ D_{1,2}f(Y)\neq 0\}}\mathbf{1}_{\{D_{1,3}f(Y')\neq 0\}}D_{2}f(Z)^{2}D_{2}f(Z')^{2} \right]\\
B_{n}(f)&\leqslant { 2^{6}}\sup_{(Y,Z,Z')}\E \left[\mathbf{1}_{\{ D_{1,2}f(Y)\neq 0\}} D_{1}f(Z)^{2}D_{2}f(Z')^{2} \right].\\
\end{align*}
 \end{proposition}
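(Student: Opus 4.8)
The plan is to bound the functionals $B_n(f)$ and $B'_n(f)$ of Theorem~\ref{thm:dependancy}, which are expressed in terms of the random difference operators $\Delta_i f$, $\Delta_{i,j}f$ acting on recombinations of $\{X,X',\tilde X\}$, by the analogous quantities written in terms of the \emph{add-one-point / remove-one-point} operators $D_i f$, $D_{i,j}f$. The key structural fact I would use is that, when $f$ is the restriction to $E^n$ of a functional defined on all finite configurations, a single-coordinate difference $\Delta_j f(Z,X') = f(Z) - f(Z^j)$ (where $Z^j$ replaces the $j$th coordinate of $Z$ by $X'_j$) can be telescoped through the configuration with the $j$th point deleted: writing $\hat Z^j$ for $Z$ with its $j$th coordinate removed, one has
\[
\Delta_j f(Z,X') = \bigl(f(Z) - f(\hat Z^j)\bigr) - \bigl(f(Z^j) - f(\hat Z^j)\bigr) = D_j f(Z) - D_j f(Z^j),
\]
and both terms on the right are of the form $D_j f(W)$ for $W$ a recombination of $\{X,X',\tilde X\}$ (indeed of $\{X,X'\}$). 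Iterating this for the second-order operator gives $\Delta_{1,2}f(Y) = \sum (\pm) D_{1,2}f(W)$ over four recombinations $W$, so that $\{\Delta_{1,2}f(Y)\neq 0\}\subseteq \bigcup_W \{D_{1,2}f(W)\neq 0\}$.

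Concretely, first I would establish the pointwise identities expressing $\Delta_j f$ as a difference of two $D_j$-terms and $\Delta_{1,2}f$ as an alternating sum of (at most) four $D_{1,2}$-terms, all evaluated at recombinations of $\{X,X',\tilde X\}$ (the extra copy $\tilde X$ is what allows the deleted-coordinate vectors appearing in different difference operators to be realised simultaneously as recombinations). Second, I would plug these into the definitions of $B_n(f)$ and $B'_n(f)$. For $B_n(f)$: on the event $\{\Delta_{1,2}f(Y)\neq 0\}$ at least one of the four $D_{1,2}$-events must hold, so $\mathbf 1_{\{\Delta_{1,2}f(Y)\neq 0\}} \leqslant \sum_{W}\mathbf 1_{\{D_{1,2}f(W)\neq 0\}}$ (four terms); and $\Delta_1 f(Z)^2 \leqslant 2\bigl(D_1 f(Z)^2 + D_1 f(Z^1)^2\bigr)$, similarly for $\Delta_2 f(Z')^2$. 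Expanding the product $\Delta_1 f(Z)^2 \Delta_2 f(Z')^2$ produces $2\cdot 2 = 4$ cross terms, each of the form $D_1 f(W_1)^2 D_2 f(W_2)^2$ at recombinations; combined with the four indicator terms and the factor from the $(\leqslant 2(\cdot+\cdot))$ step this yields a bound by $4 \cdot 4 \cdot 4 = 2^6$ times the supremum over recombinations of $\E[\mathbf 1_{\{D_{1,2}f(Y)\neq 0\}}D_1 f(Z)^2 D_2 f(Z')^2]$, after relabelling and taking the sup outside. For $B'_n(f)$ the analogous count involves two second-order indicators (each replaced by a sum of $4$) and again the quadratic terms ($2\times 2$ each), giving $4\cdot 4\cdot 4\cdot 4 = 2^8$; I would absorb the Cauchy--Schwarz step \eqref{eq:coeff-CS} already available in the paper so that only $D_2 f$-type fourth moments — written as $D_2 f(Z)^2 D_2 f(Z')^2$ after a further Cauchy--Schwarz — need appear.

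The step I expect to require the most care is the bookkeeping of recombinations: I must check that every vector appearing after the telescoping (the original recombination, its $j$th coordinate swapped to $X'_j$, and the deleted-coordinate configurations re-expanded by inserting the relevant coordinate of $\tilde X$ or $X'$) genuinely lies in the class of recombinations of $\{X,X',\tilde X\}$, and that in forming a \emph{product} such as $D_1 f(W_1)^2 D_2 f(W_2)^2$ the two factors can be realised on a \emph{common} probability space with $W_1,W_2$ simultaneously recombinations — this is exactly why three independent copies are enough (each coordinate is touched by at most two of the operators $\Delta_1,\Delta_2$ plus the ambient vector). The constants $2^6$ and $2^8$ are deliberately not optimised; once the combinatorial reduction is in place, the inequalities follow by monotonicity of expectation, the elementary bound $(a-b)^2\leqslant 2a^2+2b^2$, a union bound on the support events, and finally pulling the suprema out of the (finitely many) sums. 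This completes the proof of Proposition~\ref{r:zee}.
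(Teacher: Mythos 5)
Your proposal follows essentially the same route as the paper: you establish the same two key pointwise identities $\Delta_j f = D_j f(\cdot)-D_j f((\cdot)^j)$ and $\Delta_{1,2}f = \sum_{S\subseteq\{1,2\}}(-1)^{|S|}D_{1,2}f((\cdot)^S)$ (equations \eqref{e:zu}--\eqref{e:zuz} in the paper), then combine a union bound on the support indicators with $(a-b)^2\leqslant 2a^2+2b^2$ for the quadratic factors and count $4\cdot 4\cdot 4=2^6$ resp.\ $4\cdot 4\cdot 4\cdot 4=2^8$, pulling suprema out over recombinations exactly as the paper does. The only cosmetic deviation is how you reach the $D_2 f(Z)^2D_2 f(Z')^2$ form for $B'_n(f)$ (the paper's proof actually arrives at $D_2 f(Z)^2D_3 f(Z')^2$, which does not literally match its own statement), but your suggested extra Cauchy--Schwarz step resolves this without changing the constant or the substance of the argument.
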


\begin{proof}First observe that
\begin{align}
 | \Delta _{j}f(X) | &\leqslant  | D_{j}f(X) | + | D_{j}f(X^{j}) | \label{e:zu} \\
   \Delta _{i,j}f(X) &= D_{i,j}f(X)-D_{i,j}f(X^{i})    -D_{i,j}f(X^{j})+D_{i,j}f(X^{\{i,j\}}). \label{e:zuz}
\end{align}  
Let $Y,Y',Z,Z'$ be recombinations of $\{X,X',\tilde X\}$. Using the bounds above, there are recombinations $Y^{(i)},Y^{',(i)},i=1,\dots ,4$ and $Z^{(l)},Z^{',(l)},l=1,2$, such that
\begin{align*}
\E&\left[ \mathbf{1}_{\{\Delta _{1,2}f(Y)\neq 0,\Delta _{1,3}f(Y')\neq 0\}}\Delta _{2}f(Z)^{2}\Delta _{3}f(Z')^{2} \right]\\
&\leqslant \E\left[ \sum_{i=1}^{4}\mathbf{1}_{\{D_{1,2}f(Y^{(i)})\neq 0\}}\sum_{j=1}^{4}\mathbf{1}_{\{D_{1,3}f(Y^{',(j)})\neq 0\}}\sum_{l,m=1}^{2} { 4}D_{2}f(Z^{(l)})^{2}D_{3}(Z^{',(m)})^{2}\right]\\
&\leqslant { 256}\sup_{(Y,Y',Z,Z')}\E \left[ \mathbf{1}_{\{D_{1,2}f(Y)\neq 0\}}\mathbf{1}_{\{D_{1,3}f(Y')\neq 0\}}D_{2}f(Z)^{2}D_{3}f(Z')^{2} \right],
\end{align*}which gives the bound on $B_{n}'(f)$. The bound on $B_{n}(f)$ is obtained analogously.
\end{proof}

\begin{remark}\label{r:zezz}
Our framework is more restrictive than that of {\orange \cite[Theorem 2.5]{Cha08}}, where it is not assumed that $f$ is symmetric, but rather that its dependency graph {is symmetric}, meaning that the relation $\Delta _{i,j}f(X)=0$ is equivalent to $\Delta _{\sigma (i),\sigma (j)}f(X^{\sigma })=0$ for any $i\neq j$ and every permutation $\sigma $ of $\{1,...,n\}$, where $X^{\sigma }_{i}:=X_{\sigma (i)}$. One should notice that { this subtlety is not exploited} in most applications of \cite{Cha08} -- see e.g. \cite{Nol}.
Under our symmetry assumption, {\blu a bound analogous to the main estimate in} {\orange \cite[Theorem 2.5]{Cha08}} can be retrieved from \eqref{eq:graph-bound} by using the bounds
\begin{align*}
\sqrt{\E \Delta _{j}f(X)^{4}} \,+\, &\sqrt{n\coef }+\sqrt{n^{2}\coeff}\\ &\leqslant 3\sqrt{\E\Delta _{j}f(X)^{4}+n\coef  +n^{2}\coeff}\\
&\leqslant 3\sqrt{8\sum_{j,k=1}^{n}\sup_{(Y,Y',Z,Z')}\E1_{\{\Delta _{1,j}f(Y)\neq 0\}}1_{\{\Delta _{1,k}f(Y')\neq 0\}}\Delta _{j}f(Z)^{2}\Delta _{k}f(Z')^{2}}\\
&\leqslant 6\sqrt{2}\sqrt{\sum_{j,k=1}^{n}\sup_{(Y,Y',Z)}n^{-2}\E (\sup_{j=1}^{n} | \Delta _{j}f(Z) | )^{4}\delta _{1}(Y)\delta _{1}(Y')}\\
&\leqslant 6\sqrt{2} (\E M(X)^{8})^{1/4}(\E \delta _{1}(X)^{4})^{1/4}
\end{align*}where $M(X)=\sup_{i} | \Delta _{i}f(X) | $ and $\delta _{1}(X)=\#\{j:\Delta _{1,j}f(X)\neq 0\}$. One should notice that the additional term {\blu involving quantities of the type} $\E  | f(X)  \Delta  _{1}f(X) ^2  \Delta  _{1}f(X^{A})    |$ {\blu appears in our bounds because we are dealing with the} Kolmogorov distance: in general, we shall control this term by using the rough estimate $\E  | f(X)  \Delta  _{1}f(X) ^2  \Delta  _{1}f(X^{A})  |\leqslant \sigma \sqrt{\E \Delta _{j}f(X)^{6}} $, {\blu that one can e.g. deduce by applying twice the Cauchy-Schwartz inequality} -- see Section \ref{sec:applis} for more details.
\end{remark}

}

\begin{proof}[Proof of Theorem \ref{thm:dependancy}]
Assume without loss of generality that $\sigma =1$. {\blu Our estimate follows by appropriately bounding each of the four summands appearing on the right-hand side of (\ref{eq:abstract-intermed-bound}). We have for $A\subseteq [n],1\leqslant j\leqslant n,$ by H\"older inequality,
\begin{align*}
\E |  f(X)\Delta _{j}f(X)^{2}\Delta _{j}f(X^{A}) |&= \E  | f(X)^{2/3} \Delta _{j}f(X)^{2}| | \Delta _{j}f(X)^{1/3}\Delta _{j}f(X^{A}) |\\
&\leqslant \left( \E  | f(X) \Delta _{j}f(X)^{3}|  \right)^{2/3}\left( \E  | f(X)\Delta _{j}f(X^{A}) ^{3}|  \right)^{1/3}\\
&\leqslant \sup_{A\subseteq [n]}\E  | f(X)\Delta _{j}f(X^{A})^{3} |,    
\end{align*}because $\Delta _{j}f(X)=\Delta _{j}f(X^{\emptyset })$. The  two last terms on the right-hand side of (\ref{eq:abstract-intermed-bound}) are therefore bounded by the  last two terms in \eqref{eq:graph-bound}, in view of the symmetry of $f$} and of the relation $\sum_{A\subsetneq [n]:1\notin A}{\blu \kappa_{n,A}}=1$. 
 To control the first two summands in (\ref{eq:abstract-intermed-bound}), we first bound the square root of the variance of a random variable of the type $U:={ \frac12}\sum_{A\subsetneq [n]}\kappa _{n,A}U_{A}$, for a general family of square-integrable random variables $U_{A}(X,X'), A\subsetneq [n]$.  Using e.g. \cite[Lemma 4.4]{Cha08}, we infer that
\begin{align}
\label{eq:init-preuve-graphe}
\sqrt{\var\left( \E\left( U | X \right) \right)}&\leqslant \frac{1}{2} \sum_{A\subsetneq   [n]}\kappa _{n,A}\sqrt{\var\E\left( U_{A} | X \right)}\leqslant \frac{1}{2}\sum_{A\subsetneq [n]}\kappa _{n,A}\sqrt{\E\left( \var(U_{A} | X') \right)}.
\end{align} 
This inequality will be used both for $U_{A}=T_{A}$ and $U_{A}=T_{A}'$.
Let us now bound each summand separately. Fix $A\subseteq [n]$. Introduce  the  substitution operator based on $\tilde X=(\tilde X_{i})_{1\leqslant i\leqslant n}$  
\begin{align*}
\tilde S_{i}(X)=(X_{1},\dots ,\tilde X_{i},\dots ,X_{n} ).
\end{align*}  Recall that, by the Efron-Stein's inequality, for any  square-integrable functional $Z(X_{1},\dots ,X_{n})$,
\begin{align*}
\var(Z)\leqslant \frac{1}{2}\sum_{i=1}^{n}\E(\tilde \Delta _{i}Z(X))^{2}
\end{align*}where 
\begin{align*}
(\tilde \Delta _{i}Z)(X):=Z(\tilde S_{i}(X))-Z(X)
\end{align*}is clearly centred.
Applying this to $Z(X)=U_{A}(X,X')$ for fixed $X'$, 
\begin{align*}
\var(U_{A} | X')  &\leqslant \frac{1}{2}\sum_{i=1}^{n}\E\left[\left( \tilde \Delta _{i}U_{A}(X,X') \right) ^{2} | X'\right].
\end{align*}
 From this relation, we therefore infer that 
\begin{align*}
\sqrt{\var\left( \E\left( U | X \right) \right)}\leqslant {\orange \frac{1}{\blu \sqrt{8}}\sum_{A\subsetneq [n]}\kappa _{n,A} \sqrt{\sum_{i=1}^{n}\E \left( \tilde \Delta _{i}U_{A} \right)^{2}.}}
\end{align*}
 Now recall that  $U_A=T_{A}$ or $U_{A}=T_{A}'$, i.e. $U_{A} =\sum_{j\notin A}\Delta _{j}f(X)g(\Delta _{j}f(X^{A}))$, where either $g$ is the identity or $g(\cdot )= | \cdot  | $. 
Expanding the square yields
\begin{align}
\label{eq:square-var-UA}
\sum_{i=1}^{n}{\blu \E \left( \tilde \Delta _{i}U_{A} \right)^{2}}&= \sum_{i=1}^{n}\sum_{j,k\notin A} \E | \tilde \Delta _{i}(\Delta _{j}f(X)g(\Delta _{j}f(X^{A}))) || \tilde \Delta _{i}(\Delta _{k}f(X)g(\Delta _{k}f(X^{A}))) |.\end{align}
{Now fix $1\leqslant i\leqslant n$, write $\tilde X^{i}=\tilde S_{i}(X)$ and observe that for $j\notin A$,
\begin{align}
\label{eq:derivative-product}
  \tilde \Delta _{i}(\Delta _{j}f(X)g(\Delta _{j}f(X^{A}))) =\tilde \Delta _{i}(\Delta _{j}f(X))g(\Delta _{j}f(X^{A}))+\Delta _{j}f(\tilde X^{i})\tilde \Delta _{i}(g(\Delta _{j}f(X^{A}))).\end{align}
We note immediately that, in the case $i=j$, using $ | \tilde \Delta _{i}g(V(X)) | \leqslant  | \tilde \Delta _{i}(V(X)) | $ and $\tilde \Delta _{i}(\Delta _{i}(V(X)))=\tilde \Delta _{i}(V(X))$ for any random variable $V(X)$, {\blu the right-hand side of \eqref{eq:derivative-product}} is bounded by the simpler expression
\begin{align}
\label{eq:product-derivative-i=j}
 | \tilde \Delta _{i} f(X)\Delta _{i}f(X^{A}) | + | \Delta _{i}f(\tilde X^{i})\tilde\Delta _{i}f(X^{A}) |  \leqslant \frac{1}{2}\left[ \tilde \Delta _{i}f(X)^{2}+\Delta _{i}f(X^{A})^{2}+\Delta _{i}f(\tilde X^{i})^{2}+\tilde \Delta _{i}f(X^{A})^{2} \right].
\end{align}
Now let us examine each summand appearing in \eqref{eq:square-var-UA} separately. If $i\notin A$ and $i=j=k$, using \eqref{eq:product-derivative-i=j}, the summand is smaller than 
\begin{align*}
\frac{1}{4}\E\left[ \tilde \Delta _{i}f(X)^{2}+\Delta _{i}f(X^{A})^{2}+\Delta _{i}f(\tilde X^{i})^{2}+\tilde \Delta _{i}f(X^{A})^{2} \right]^{2}\leqslant 4\E \Delta _{1}f(X)^{4} .
\end{align*}
In the case where $i,j,k$ are pairwise distinct,  introduce the vector $\bar X$ by 
\begin{align*}\left\{\begin{array}{ll}
\bar X_{i}&=\tilde X_{i}\\
\bar X_{l}&=X_{l}' \text{ if }l\neq i,\end{array}\right.
\end{align*}and, for $x\in E^{n}$ and some {\blu mapping} $\psi $ on $E^n$, define, for $1\leqslant l\leqslant n$, $$\bar \Delta _{l}\varphi (x)=\psi  (x)-\psi  (x_{1},\dots ,x_{l-1},X_{l},x_{l+1},\dots ,x_{n}).$$
Then, the corresponding summands are bounded by\begin{align*}
4&\sup_{(Y,Y',Z,Z')}\E\left|  \bar \Delta _{i}(\bar\Delta _{j}f(Y))\bar \Delta _{j}f(Y')\bar\Delta _{i}(\bar \Delta _{k}f(Z))\bar \Delta _{k}f(Z') \right| .
\end{align*}
 Using $\bar X\equlaw X'$ and the fact that if $Y$ is a recombination, switching the roles of $\tilde X_{i}$ and  $X'_{i}$ in $Y$ still yields a recombination of $\{X,X',\tilde X\}$, the previous expression is bounded by
\begin{align*}
&=4\sup_{(Y,Y',Z,Z')}\E\left|   \Delta _{i}(\Delta _{j}f(Y))\Delta _{j}f(Y')\Delta _{i}(\Delta _{k}f(Z))\Delta _{k}f(Z') \right| \\
&\leqslant 4\sup_{(Y,Y',Z,Z')}\E\mathbf{1}_{\{\Delta _{i,j}f(Y)\neq 0\}}( | \Delta _{j}f(Y) |+ | \Delta _{j}f(Y^{i}) |  ) | \Delta _{j}f(Y') | \times \\
&\quad\quad\quad\quad\quad\quad \times \mathbf{1}_{\{\Delta _{i,k}f(Z)\neq 0\}}( | \Delta _{k}f(Z) |+ | \Delta _{k}f(Z^{i}) |  ) | \Delta _{k}f(Z') |\\
& \leqslant 16\coeff,
\end{align*}where we have used Cauchy-Schwarz inequality. The case $i\neq j=k$ is treated with the same vector $\bar X$ and operators $\bar \Delta _{l}$. Using   similar computations  and Cauchy-Schwarz inequality, we have the upper bound 
\begin{align*}
&4\sup_{(Y,Y',Z,Z')}\E \bar \Delta _{i}(\bar\Delta _{j}f(Y))\bar \Delta _{j}f(Y')\bar\Delta _{i}(\bar \Delta _{j}f(Z))\bar \Delta _{j}f(Z')\\
&\leqslant 4\sup_{(Y,Y')}\left[ \E \bar \Delta _{i}(\bar\Delta _{j}f(Y))^{2}\bar \Delta _{j}f(Y')^{2} \right]\\
&=4\sup_{(Y,Y')}\left[ \E  \Delta _{j}(\Delta _{i}f(Y))^{2} \Delta _{j}f(Y')^{2} \right]\\
&\leqslant 4\sup_{(Y,Y')}\E \mathbf{1}_{\{\Delta _{i,j}f(Y)\neq 0\}}( | \Delta _{i}f(Y) |+ | \Delta _{i}f(Y^{j}) |  )^{2}\Delta _{j}f(Y')^{2}\\
&\leqslant 16\sup_{(Y,Y',Z)}\E\mathbf{1}_{\{\Delta _{i,j}f(Y)\neq 0\}} \Delta _{i}f(Z)  ^{2}\Delta _{j}f(Y')^{2}\\
&\leqslant 16\coef,
\end{align*} where the suprema run over  recombinations $Y,Y',Z,Z'$ of $\{X,X',\tilde X\}$. Finally, if $i=j\neq k$,   the corresponding summands on the right-hand side of \eqref{eq:square-var-UA} are bounded by 
\begin{align*}
4\sup_{(Y,Y',Z)}\E&\left| \bar \Delta _{i}f(Y)^{2}\bar \Delta _{i}(\bar \Delta _{k}f(Y'))\bar \Delta _{k} f(Z) \right| \\
&\leqslant 4\sup_{(Y,Y',Z)}\E \mathbf{1}_{\{\Delta _{i,k}f(Y')\neq 0\}}( | \Delta _{k}f(Y) |+ | \Delta _{k}f(Y^{i}) |  )\Delta _{i}f(Y)^{2} | \Delta _{k}f(Z) |\\
&\leqslant 8\coef. 
\end{align*}
This yields
\begin{align*}
{\blu \sum_{i=1}^{n}\E \left( \tilde \Delta _{i}U_{A} \right)^{2}}
&\leqslant {\blu 16}n\sum_{j,k\notin A}\left[ \mathbf{1}_{\{j=k=1\}}\E\Delta _{1}f(X)^{4}+(\mathbf{1}_{\{k\neq j=1\}}+\mathbf{1}_{\{k=j\neq 1\}})B_{n}(f)+\mathbf{1}_{\{k\neq j\neq 1\}}B_{n}'(f)  \right]\\
&\leqslant {\blu 16}n\left(\mathbf{1}_{\{1\notin A\}} \Delta _{1}f(X)^{4}+2(n- | A | )B_{n}(f)+(n- | A | )^{2}B_{n}'(f)   \right),
\end{align*} 
and using the inequality  $\sqrt{x+y}\leqslant \sqrt{x}+\sqrt{y}$ ($x,y\geq 0$) we deduce that 
\begin{align*}
{\blu\sqrt{ \sum_{i=1}^{n}\E \left( \tilde \Delta _{i}U_{A} \right)^{2}}}&\leqslant \sqrt{{\blu 16}n}\left(  \mathbf{1}_{\{1\notin A\}}\sqrt{\E \Delta _{1}f(X)^{4}}+\sqrt{2{\rosso B_{n}(f)}}\sqrt{n- | A | }+\sqrt{{\rosso B_{n}'(f)} }(n- | A | )  \right).\\ 
\end{align*}
Finally,
\begin{align*}
&\sqrt{\var(\E(U |  X ))}\leqslant \\
&\sqrt{{\blu 8}n}\left( \sqrt{\E\Delta _{1}f(X)^{4}}\sum_{A\subsetneq [n]:1\notin A}\kappa _{n,A}+\sqrt{{\rosso B_{n}(f)}}\sum_{A\subsetneq [n]}\kappa _{n,A}\sqrt{n- | A | }+\sqrt{{\rosso B_{n}'(f)}}\sum_{A\subsetneq [n]}\kappa _{n,A}(n- | A | ) \right)
\end{align*}and the result follows by evaluating the three sums over $A\subsetneq [n]$ in the last expression.

}

\end{proof}

\section{Applications}
\label{sec:applis}

  \subsection{Set approximation with random tessellations}
 \label{sec:voronoi}

Let $K$ be a compact subset of $ \mathbb{R}^d$ with positive volume, and let $X=(X_{i})$ be a locally finite collection of points. Assume the only information available about $K$ is given by the values of the indicator function $1_{\{x\in K\}}, x\in X$. Then, the {\it Voronoi reconstruction}, or {\it Voronoi approximation}, of $K$ based on $X$ is  defined as
\begin{align*}
K^{X}=\{y\in \mathbb{R}^{d}:\text{ the closest point from $y$ in $X$ lies in $K$}\}.
\end{align*}  
This chapter is devoted to the study of the error committed when one  approximates the volume of $K\subseteq [0,1]^{d}$ with that of $K^{X}$, when $X$ is a random input consisting in  $n$ i.i.d points  in $[0,1]^d$.
 
 The underlying structure in this approximation scheme is the Voronoi tessellation based on   $X $. For $x\in [0,1]^d$,
denote by   $V(x;X)$ the Voronoi cell with nucleus $x$ among $X$, i.e. the convex set  formed by points $y\in [0,1]^{d}$ such that $\|y-x\|\leqslant \|y-x'\|$ for any point $x'\in (X,x)$, where in all this section { $(X,x) := X\cup \{x\}$ }, and we extend the set notation $\in $ to ordered collections of points in an obvious way. The volume approximation described above is denoted 
\begin{align*}
\varphi(X)=\Vol(K^{X})=\sum_{i}1_{\{X_{i}\in K\}}\Vol(V(X_{i};X)).
\end{align*}Along the same lines, one can also approximate the perimeter of $K$ via the relation 
 $
\varphi _{\Per}(X)=\Vol(K^{X}\Delta K) $
 where $\Delta $ denotes the symmetric difference of sets.  

This set approximation can serve in image reconstruction and estimation: it has first been introduced by Einmahl and Khmaladze \cite{EinKhm} as a discriminating statistic in the two-sample problem. These authors proved a strong law of large numbers in dimension $1$.   Heveling and Reitzner  \cite{HevRei} proved that if $K$ is convex and compact and $X=X'$ is a homogeneous Poisson process with  intensity $n$, $\E \varphi(X')=\Vol(K)$, and $\var(\varphi(X'))\leqslant cn ^{-1-1/d}S(K)$ where $c$ is an explicit constant and $S(K)$ is the surface area of $K$. They  also established  that $\E \varphi _{\Per}(X')=c'n ^{-1/d}S(K)(1+O(n ^{-1/d}))$ and $\var(\varphi _{\Per}(X'))\leqslant c'n ^{-1-1/d}S(K)$. Reitzner, Spodarev and Zaporozhets \cite{ReiSpoZap}  extended these results to sets  with finite variational perimeter, and also gave upper bounds for $\E  | \varphi(X')^{q}-\Vol(K)^{q} | $ for $q\geq 1$. Schulte \cite{Sch14} proved a similar lower bound for the variance, i.e. $CS(K)n ^{-1-1/d}\leqslant \var(\varphi(X'))$ with $K$ a convex body and $C$ a universal constant, and the corresponding CLT
\begin{align*}
d_{W}\left( \frac{\varphi(X')-\E \varphi(X')}{\sqrt{\var(\varphi  (X')})} ,N\right)\to 0.
\end{align*}
Yukich \cite{Yuk15} then gave an upper bound on the speed of convergence in Kolmogorov distance.

For Binomial input, Penrose proved that for measurable $K$ and   $X$ consisting in $n$ iid variables with density $\kappa (x)>0$ on $[0,1]^{d} $, 
\begin{align}
\label{eq:penrose}
\E \varphi(X)\to \Vol(K),
\end{align}without assumption on $K$, not even the negligibility of its boundary. {Yukich \cite{Yuk15} managed to extend to a non-Poissonized setting the estimates on the variance magnitude as well as the central limit theorem for the Volume approximation. } See also \cite{CalChe14} for a result involving the Hausdorff distance.

In this section, we consider a binomial input $X=(X_{1},\dots ,X_{n})$, where the $X_{i}$ are $n$ iid variables uniformly distributed on $[0,1]^{d}$.  We give asymptotic upper bounds   for the moments of $\varphi (X) -\E\varphi(X)$,   as well as a central limit theorem with rates of convergence in the Kolmogorov distance, that is new in the literature. Note that, in the words of Heveling and Reitzner \cite{HevRei},``the general problem whether $ K^{X}$ approximates $K$ for complicated sets seems to be difficult'', and many applications of set approximation are concerned with the detection or approximation of sets with an irregular boundary, see for instance \cite{CueFraRod07} or the survey \cite[Chap. 11]{KenMol}. Our results also hold for large classes of irregular sets, with a possibly fractal boundary. The regularity of the boundary of $K$ will be assessed in terms of the following quantities. Call below {\it Lebesgue-boundary} of $K$, written $\partial K$, the class of points $x$ such that for all $\varepsilon >0$, $\Vol(B(x,\varepsilon )\cap K)>0$ and $\Vol(B(x,\varepsilon )\cap K^{c})>0$. Let $\beta >0$. Denote by $d(x,A)$ the Euclidean distance from a point $x\in \mathbb{R}^{d}$ to a subset $
  A\subseteq \mathbb{R}^{d}$. 
  Define\begin{align*}  
  \partial K^{r}&=\{x:d(x,\partial K)\leqslant r\}\\
\partial K^{r}_{+}&=K^{c}\cap \partial K^{r}\\
\gamma (K,r )&=   \int_{\partial K^{r}_{+}}\left(\frac{ \Vol(B(x,\beta r)\cap K)}{r^{d}}\right)^{2}dx.
\end{align*}  
  $K$ is said to satisfy the \emph{weak rolling ball condition} if 
\begin{align}
\label{eq:rolling-ball}
\gamma (K):=\liminf_{r>0}\Vol(\partial K^{r})^{-1}(\gamma (K,r)+\gamma (K^{c} ,r))>0.
\end{align}

This assumption somehow implies   that either  $K$ or $K^{c}$ occupies a constant positive proportion of space  as one zooms in on  a typical point close to $\partial K$, at least in a non-negligible region of $[0,1]^{d}$. It is related to {\blu a weak form of the} \emph{rolling ball condition} used in set estimation (see for instance condition (a) of Theorem 1 in \cite{CueFraRod07}, the definition of standard sets in \cite{Rod07}, Remark 4 in \cite{Sch14}, or the survey \cite[Chap. 11]{KenMol} and references therein), where for each $x\in \partial K$ a ball of radius $\beta r$ touching $x$ should lie in $\partial (K^{c})^{r}_{+}$ or $\partial K^{r}_{+}$. In our weaker form of the condition, the ball is somehow allowed to be deformed to fit in the parallel body. It certainly allows sets which boundary is smooth in a certain sense, and does not discard a priori fractal sets.  It is proved in \cite{LacVeg} that a class of fractal sets including for instance  the $2$-dimensional Von Koch flake and antiflake satisfy the condition, as well as the hypotheses of the following theorem with $\alpha =2-s$, $s=\log(4)/\log(3)$ being the fractal dimension of the boundary.

  \begin{theorem}
\label{thm:voronoi-tcl} Let $K\subset [0,1]^d$ such that
\begin{align}
\label{eq:volume-boundary-upper-bound}
\Vol(\partial K^{r})\leqslant S_+(K) r^{\alpha }
\end{align} for some $S_+(K) ,\alpha >0$. Then for $n,q\geq 1$, 
\begin{align}
\label{eq:volume-moments-upper-bound}
\E  | \varphi(X)-\E\varphi(X) | ^{q}&\leqslant  S_+(K) C_{d,q,\alpha }n^{ -q/2-\alpha /d}, 
\end{align}for some $C_{d,q,\alpha  }>0$ explicit in the proof.
 If furthermore $K$  satisfies the weak  rolling ball condition  (\ref{eq:rolling-ball}) and 
\begin{align}
\label{eq:hyp-lower-bound-boundary}
\Vol(\partial  K^{r})\geq S_-(K) r^{\alpha }
\end{align} for some $S_-(K) >0$, then for $n$ sufficiently large
\begin{align*}
C^{-}_{d} S_-(K) \gamma  (K) \leqslant\frac{ \var(\varphi(K,X))}{ n^{-1-\alpha /d } }\leqslant C^{+}_{d}S_+(K) C_{d,2,\alpha } ,\\
\end{align*}for some $C^{-}_{d},C^{+}_{d} >0$,
and
 for every $\varepsilon >0$, there is $c_{\varepsilon }>0$ not depending on $n$ such that{
\begin{align*}
d_{K}\left( \frac{\varphi(X)-\E\varphi(X)}{\sqrt{\var(\varphi (X))}},N \right)&\leqslant c_{\varepsilon } n^{-1/2 +\alpha /2d}\log(n)^{3+\alpha /d+\varepsilon },
\end{align*}}
for $n\geq 1$.

\end{theorem}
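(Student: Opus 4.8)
The plan is to apply Theorem~\ref{thm:dependancy} to $f:=\varphi-\E\varphi(X)$, after (i) establishing the \emph{stabilisation} properties of the Voronoi functional that render all relevant quantities computable, and (ii) pinning down the order of $\sigma^2:=\var(\varphi(X))$. For (i): deleting the $i$-th point from an $n$-point configuration $x$ changes $\Vol(K^x)$ only through the re-assignment of the points lying in a ball around $x_i$ whose random radius $R_i$ has a Gaussian-type tail $\P(R_i>t)\lesssim\exp(-cnt^d)$ for $t\gtrsim n^{-1/d}$; moreover $|D_i\varphi(x)|\le\kappa_d R_i^d$ and, decisively, $D_i\varphi(x)=0$ unless $d(x_i,\partial K)\le R_i$. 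Analogous statements, with their own exponentially stabilising radii, hold for the mixed differences $D_{i,j}\varphi$, which in addition vanish unless $x_i,x_j$ are at mutual distance $\le R$ and within distance $R$ of $\partial K$. Combined with $\Vol(\partial K^r)\le S_+(K)r^\alpha$, a high-probability truncation of all these radii at $\rho_n:=C(n^{-1}\log n)^{1/d}$ — the complementary ``bad'' event having probability $\le n^{-100}$, on which one uses $|\varphi|\le1$, $|D_\cdot\varphi|\le1$ — reduces every expectation below to an elementary volume computation.

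First I would prove \eqref{eq:volume-moments-upper-bound}. Writing $\varphi(X)-\E\varphi(X)=\sum_{i=1}^nD_i$ as a Doob martingale along the filtration generated by $X_1,\dots,X_i$, Burkholder's inequality and Minkowski's inequality in $L^{q/2}$ give $\E|\varphi(X)-\E\varphi(X)|^q\lesssim_q\bigl\|\sum_iD_i^2\bigr\|_{q/2}^{q/2}\le\bigl(\sum_i\|D_i\|_q^2\bigr)^{q/2}$ for $q\ge2$. Since the martingale increments satisfy $\|D_i\|_q\lesssim\bigl(\E[R_i^{dq}\,\one_{\{d(X_i,\partial K)\le R_i\}}]\bigr)^{1/q}$, the stabilisation tails together with $\Vol(\partial K^r)\le S_+(K)r^\alpha$ yield, after integration, $\E[R_i^{dq}\one_{\{d(X_i,\partial K)\le R_i\}}]\lesssim S_+(K)C_{d,q,\alpha}n^{-q-\alpha/d}$, whence $\sum_i\|D_i\|_q^2\lesssim n^{-1-2\alpha/(dq)}$ and the asserted rate $n^{-q/2-\alpha/d}$; the case $q=2$ is the variance upper bound. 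For the lower bound I would invoke Corollary~\ref{lem:var-lower-bound}: $\var(\varphi)\ge n\int(\bar g_n-\bar g_n(x))^2\,dx=n\bigl(\int\bar g_n(x)^2\,dx-\bar g_n^2\bigr)$, where $\bar g_n(x):=\E[\varphi(x,X_2,\dots,X_n)-\varphi(X_2,\dots,X_n)]$ is the expected add-one cost at $x$ and $\bar g_n:=\int\bar g_n(y)\,dy$. A direct analysis of the defect cell of an added point, using the weak rolling ball condition \eqref{eq:rolling-ball} and the boundary-density functional $\gamma$, shows that for $x\in\partial K^{r}_{+}$ with $r\asymp n^{-1/d}$ one has $|\bar g_n(x)|\gtrsim n^{-1}\,\Vol(B(x,\beta r)\cap K)/r^d$ (and symmetrically for $x\in K$ near $\partial K$), so that $\int\bar g_n(x)^2\,dx\gtrsim n^{-2}\bigl(\gamma(K,r)+\gamma(K^c,r)\bigr)\gtrsim n^{-2}\gamma(K)S_-(K)r^\alpha$ once $n$ is large; since $|\bar g_n|=O(n^{-1-\alpha/d})$ (same estimate with $q=1$) and $\alpha<d$, the subtracted $\bar g_n^2$ is negligible and one obtains $\var(\varphi)\gtrsim C_d^-S_-(K)\gamma(K)n^{-1-\alpha/d}$. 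Thus $\sigma^2\asymp n^{-1-\alpha/d}$, which neutralises the $\sigma^{-2},\sigma^{-3},\sigma^{-4}$ prefactors in \eqref{eq:graph-bound}.

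Next I would bound the five summands of \eqref{eq:graph-bound}. Using Proposition~\ref{r:zee} to pass from the operators $\Delta$ to the delete-a-point operators $D$, the truncation argument gives $B_n(f)\lesssim S_+(K)\rho_n^{5d+\alpha}$, $B'_n(f)\lesssim S_+(K)\rho_n^{6d+\alpha}$, $\E\Delta_1f(X)^4\lesssim S_+(K)\rho_n^{4d+\alpha}$ and $\E|\Delta_1f(X)|^3\lesssim S_+(K)\rho_n^{3d+\alpha}$: each is a product of ``small-ball'' factors $\rho_n^d$ (Voronoi-neighbour constraints), one factor $\rho_n^\alpha$ (the constraint that some point lie in $\partial K^{\rho_n}$, of volume $\le S_+(K)\rho_n^\alpha$), and powers of $\rho_n^d\sim n^{-1}\log n$ from the sizes of the differences. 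Plugging these into \eqref{eq:graph-bound} and using $\sigma^2\gtrsim n^{-1-\alpha/d}$, the first three terms and the last term are each $\lesssim n^{-1/2+\alpha/2d}$ times an explicit power of $\log n$; the two largest such powers, both $3+\alpha/d$, come from $\tfrac{\sqrt{2\pi}}{16\sigma^3}n\,\E|\Delta_1f(X)|^3$ and from the Kolmogorov-specific term below.

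The only term demanding a new idea is $\tfrac{n}{4\sigma^4}\sup_{A}\E|f(X)\,\Delta_1f(X^A)^3|$: the naive Hölder bound $\E|f(X)\Delta_1f(X^A)^3|\le\|f(X)\|_2(\E|\Delta_1f(X^A)|^6)^{1/2}$ gives only $\lesssim\sigma\,n^{-3-\alpha/2d}$, hence the suboptimal rate $n^{-1/2+\alpha/d}$, because the sixth-moment square root halves the boundary-localisation gain and no choice of Hölder exponents repairs this (the $n^{-\alpha/d}$ in the difference moments is additive, not scaling with the order). Instead I would observe that $\Delta_1f(X^A)=0$ unless coordinate $1$ lies in $\partial K^{\rho_n}$ (off the bad event), carry this indicator through, and split $\E|f(X)\,\Delta_1f(X^A)^3|\le\bigl(\E[\one_{\{X_1\in\partial K^{\rho_n}\}}f(X)^2]\bigr)^{1/2}\bigl(\E|\Delta_1f(X^A)|^6\bigr)^{1/2}$. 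The second factor is $\lesssim S_+(K)^{1/2}\rho_n^{3d+\alpha/2}$ as before; the first is controlled by $\E[f(X)^2\mid X_1=x]\lesssim\sigma^2$ \emph{uniformly in $x$} — which holds since the conditional variance of $\varphi$ given one point is again $O(n^{-1-\alpha/d})$ and the conditional mean shift $\E[\varphi\mid X_1=x]-\E\varphi=\bar g_n(x)-\bar g_n$ is $O(n^{-1})$, with $(n^{-1})^2=o(\sigma^2)$ because $\alpha<d$ — so that $\E[\one_{\{X_1\in\partial K^{\rho_n}\}}f(X)^2]\lesssim\sigma^2S_+(K)\rho_n^\alpha$. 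Hence $\E|f(X)\Delta_1f(X^A)^3|\lesssim\sigma\,S_+(K)\rho_n^{3d+\alpha}$, and after multiplication by $n/\sigma^4$ the contribution is $\lesssim S_+(K)\sigma^{-3}n^{-2-\alpha/d}(\log n)^{3+\alpha/d}\asymp n^{-1/2+\alpha/2d}(\log n)^{3+\alpha/d}$. Summing the five bounds, with the bad event contributing $O(n^{-99})$ and a small $\varepsilon$ absorbing the remaining logarithmic slack, yields the announced estimate. The principal obstacle is precisely this last point — recognising that the boundary indicator must be transported through the estimate undamaged, together with the uniform-in-$x$ conditional second-moment bound for $\varphi$ — alongside the standard but lengthy verification of the exponential stabilisation estimates for the first and second difference operators of the Voronoi functional.
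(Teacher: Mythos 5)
Your proposal is essentially correct and follows the same architecture as the paper's proof: stabilisation of the Voronoi functional via the radii $R_k$, the truncation event $\Omega_n$ with super-polynomial complement, Lemma~\ref{lm:variablesUkq}-type bounds on $\E U_k^q$, Corollary~\ref{lem:var-lower-bound} plus the rolling-ball condition for the variance lower bound, and Theorem~\ref{thm:dependancy} (passing to delete-one operators $D_i$ via Proposition~\ref{r:zee}) for the Kolmogorov estimate. You also correctly identified the bottleneck: the term $n\sigma^{-4}\sup_A\E|f(X)\,\Delta_1 f(X^A)^3|$ cannot be handled by the crude $\|f\|_2\,(\E|\Delta_1 f|^6)^{1/2}$ because it halves the boundary-localisation gain $n^{-\alpha/d}$, and one must keep the constraint that the perturbed coordinate lies in $\partial K^{\rho_n}$ intact in the factor involving $f(X)$.

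Two of your steps differ from the paper's in mechanism, though not in outcome. (1) For the moment bound~\eqref{eq:volume-moments-upper-bound} you use Burkholder plus Minkowski on the Doob martingale differences; the paper invokes the Rhee--Talagrand inequality (Lemma~\ref{lm:R-T}) directly. These are equivalent: the Doob increments satisfy $\|D_i\|_q\leqslant\|\Delta_i\varphi(X,X')\|_q\leqslant 2\|D_i\varphi\|_q$ by Jensen, which is precisely what Rhee--Talagrand packages. (2) For the hard term you apply Cauchy--Schwarz with the boundary indicator carried through, which forces you to prove $\E[f(X)^2\mid X_1=x]\lesssim\sigma^2$ uniformly in $x$; your sketch of this (conditional Efron--Stein plus $\bar g_n(x)-\bar g_n=O(n^{-1})=o(\sigma)$ for $\alpha<d$) is correct but is an extra lemma not present in the paper. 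The paper instead writes $|f(X)|\leqslant|f(\hat X^j)|+|D_jf(X)|$ and exploits the independence of $f(\hat X^j)$ from $X_j$, so that the localisation factor appears as $\E|f(\hat X^j)|\cdot\P(X_j\in\partial K^{c\rho_n})$ with no conditional estimate needed. Your route buys nothing over the paper's here and is slightly heavier; but it is valid and yields the same rate $\sigma\,S_+(K)\rho_n^{3d+\alpha}$, hence the same $n^{-1/2+\alpha/2d}$ up to logarithms.
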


   \begin{remarks} {\rm \begin{enumerate}
   \item {\blu The previous theorem also applies to smooth sets. Blashke's theorem (see for instance \cite[Theorem 1]{Wal99}), yields that any $\mathcal{C}^{1}$ manifold $K$ with Lipschitz normal admits inside and outside rolling balls in the traditional sense, and satisfies in particular our weak rolling ball condition. Furthermore, such a set and its complement have positive reach, which proves by Steiner formula that the upper and lower bounds \eqref{eq:volume-boundary-upper-bound}, \eqref{eq:hyp-lower-bound-boundary} are satisfied, see the pioneering  work of Federer \cite{Fed59}.  The result might still hold if the boundary is only piecewise regular, see for instance Remark 4 in \cite{Sch14}.}
    
\item If   (\ref{eq:rolling-ball})  is not satisfied, we can still get a lower bound on the variance (and therefore a rate of convergence), but its magnitude will not match that of the upper bound, see Lemma \ref{lm:variance-lower-bound}. It might be difficult for such a set to get a clear estimate of the variance. See also the counterexample in \cite{LacVeg}.
\item The constant $\beta $ in the rolling ball condition is left at our choice. The larger $\beta $, the easier it is for $K$ to verify the  condition.
\item Conditions  (\ref{eq:volume-boundary-upper-bound}) and (\ref{eq:hyp-lower-bound-boundary}) imply that $K$ has Minkowski dimension equal to $d-\alpha $, and furthermore that $K$ has lower and upper Minkowski content (see for instance \cite{LacVeg}). Self similar sets satisfy these hypotheses, and are treated in   \cite{LacVeg}, as well as some examples, such as the Von Koch flake, that also satisfies the weak rolling ball condition.  We provide as well the example of a set $K$ with lower and upper Minkowski content for $\alpha =1/2$  that does not satisfy the rolling ball condition. Simulations indicate that for this example the variance is indeed negligible with respect to  $n^{-1-\alpha /d}$, but it is still possible to get a rate of convergence for Kolmogorov distance to the normal law. 
\item The uniformity of the distribution of the $X_{i}$'s does not have a crucial importance, apart from easing certain geometric estimates. The results should hold, up to constants, if the common distribution of the $X_{i}$'s is only assumed to have a density bounded from below by some constant $\kappa >0$ on the domain $\partial K ^{r}$, for some $r >0$.
\item The Berry-Essen bounds is derived from \eqref{eq:graph-bound}. It turns out that each of the terms on the right hand side of \eqref{eq:graph-bound} contributes with the same power of $n$, heuristically indicating that this power is likely to be optimal.
  \end{enumerate}}
\end{remarks}

  The proof of the theorem is decomposed into several independent results. The variance lower bound   is established in the specific framework of Voronoi volume approximation. The Kolmogorov distance and moments upper bounds are potentially  valid in a more general framework.

\begin{theorem}
\label{thm:general-approx}
  Define $\sigma ^{2}=\var(\varphi (X))$. Assume that $\Vol(\partial K^{r})\leqslant S_+(K) r^{\alpha }$ for some $S_+(K) ,\alpha >0$. Then  (\ref{eq:volume-moments-upper-bound}) holds, and  for every $\varepsilon >0$ there is a constant $c_{\varepsilon }$ not depending on $n$ such that for $n\geq 1$,
{\begin{align}
\label{eq:general-tcl}
 d_{K}\left(\sigma ^{-1} ( {\varphi  (X)-\E \varphi (X))}  ,N\right)\leqslant c_{\varepsilon }\left( \sigma ^{-2}n^{-3/2 -\alpha /2d  } +\sigma ^{-3}n^{-2-\alpha / d } +\sigma ^{-4}n^{-3-\alpha /d}\right)  \log(n)^{3+\alpha /2d+\varepsilon }
 \end{align}} 
where $N$ is a standard Gaussian variable.

\end{theorem}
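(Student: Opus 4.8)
The plan is to obtain \eqref{eq:general-tcl} by applying the abstract estimate \eqref{eq:graph-bound} of Theorem \ref{thm:dependancy} to the functional $f:=\varphi-\E\varphi(X)$, which is measurable, symmetric in $(X_1,\dots,X_n)$ (the Voronoi approximation ignores the labelling), bounded by $1$ --- hence square-integrable --- and centred, and which satisfies $\Delta_jf=\Delta_j\varphi$. It then suffices to estimate, for $X$ a vector of $n$ i.i.d.\ uniform points in $[0,1]^{d}$, the five quantities entering the right-hand side of \eqref{eq:graph-bound}, namely $\coef$, $\coeff$, $\E\Delta_1\varphi(X)^4$, $\sup_{A\subseteq[n]}\E| f(X)\Delta_1\varphi(X^A)^3|$ and $\E|\Delta_1\varphi(X)|^3$. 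The moment bound \eqref{eq:volume-moments-upper-bound} will be proved separately, by a Burkholder/Rosenthal-type moment inequality (equivalently, the $L^q$ Efron--Stein inequality) applied to $\varphi(X)-\E\varphi(X)$, using the same geometric input.

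That geometric input consists of two facts about $Y\mapsto\Vol(K^{Y})$. First, a \emph{stabilization} estimate: for $x\in[0,1]^{d}$ and a locally finite configuration $Y$, the effect on $\Vol(K^{\cdot})$ of inserting, deleting or displacing the point $x$ is confined to a ball $B(x,R)$ whose radius $R=R(x,Y)$ satisfies $\P(R>t)\leqslant C e^{-cnt^{d}}$ \emph{uniformly in $x$ and under the addition or removal of a bounded number of points of $Y$} --- precisely the robustness needed to accommodate the suprema over recombinations in $\coef$ and $\coeff$ --- so that $|D_i\varphi(Y)|\lesssim R^{d}$ and $\E R^{pd}\lesssim_p n^{-p}$. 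Second, a \emph{boundary--localization} principle: if the cell of $x$, together with the finitely many affected neighbouring cells, lies entirely inside $K$ or entirely inside $K^{c}$, then $\Vol(K^{\cdot})$ is unchanged; consequently $D_i\varphi(Y)\neq0$ forces $d(Y_i,\partial K)\leqslant R$, and $D_{1,2}\varphi(Y)\neq0$ forces the cells of $Y_1$ and $Y_2$ to lie within $R$ of one another \emph{and} one of them within $R$ of $\partial K$. Passing to the deletion operators $D_i,D_{i,j}$ through Proposition \ref{r:zee}, working on the event that every relevant stabilization radius is at most $(C\log n/n)^{1/d}$ (the complementary event having probability $\leqslant n^{-10}$ for $C$ large, on which everything is trivially $\leqslant 1$), bounding by $\lesssim\log n/n$ the probability that a prescribed pair of nuclei interacts, and inserting the hypothesis $\Vol(\partial K^{r})\leqslant S_+(K)r^{\alpha}$, one obtains
\begin{align*}
\E|\Delta_1\varphi(X)|^{2p}&\lesssim_p S_+(K)\,n^{-2p-\alpha/d},\\
\coef&\lesssim S_+(K)\,n^{-5-\alpha/d}(\log n)^{c_1},\\
\coeff&\lesssim S_+(K)\,n^{-6-\alpha/d}(\log n)^{c_2},
\end{align*}
the logarithmic factors arising solely from the uniform-in-$j$ / uniform-over-recombinations stabilization bounds and from the pairwise-interaction count.

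Inserting these into \eqref{eq:graph-bound}, the first bracket is $\lesssim\sigma^{-2}n^{1/2}\bigl(\sqrt{n\coef}+\sqrt{n^{2}\coeff}+\sqrt{\E\Delta_1\varphi(X)^4}\bigr)\lesssim\sigma^{-2}n^{-3/2-\alpha/2d}(\log n)^{3+\alpha/2d}$, the dominant logarithmic power coming from the $\coeff$ contribution; the last summand is $\lesssim\sigma^{-3}n\,\E|\Delta_1\varphi(X)|^3\lesssim\sigma^{-3}n^{-2-\alpha/d}$; and what remains is the Kolmogorov-specific term $\tfrac{n}{4\sigma^4}\sup_A\E| f(X)\Delta_1\varphi(X^A)^3|$, for which one must show $\E| f(X)\Delta_1\varphi(X^A)^3|\lesssim n^{-4-\alpha/d}(\log n)^{c_3}$. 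Here the rough bound $\E| f(X)\Delta_1\varphi(X^A)^3|\leqslant\sigma\sqrt{\E\Delta_1\varphi(X)^6}$ of Remark \ref{r:zezz} is \emph{not} enough: it yields only $\sigma^{-3}n^{-2-\alpha/2d}$, which exceeds the target $\sigma^{-4}n^{-3-\alpha/d}$ exactly in the regime $\sigma\asymp n^{-1/2-\alpha/2d}$. One must instead exploit that $f(X)=\varphi(X)-\E\varphi(X)$ and the \emph{local} factor $\Delta_1\varphi(X^A)$ are asymptotically decorrelated.

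This decorrelation is the crux of the argument and the step I expect to be the main obstacle. I would decompose $f(X)=\E[f(X)\mid X_1]+\bigl(f(X)-\E[f(X)\mid X_1]\bigr)$. The first summand equals $\E[\Delta_1\varphi(X,X')\mid X_1]$, is supported --- up to an exponentially small error --- on $\{d(X_1,\partial K)\lesssim R_1\}$ with $|\E[\Delta_1\varphi(X,X')\mid X_1]|\lesssim\E[R_1^{d}\mid X_1]$, and, multiplied by $\Delta_1\varphi(X^A)^3$ (which vanishes unless $X_1$ or $X_1'$ lies within $R_1$ of $\partial K$, and is then $\lesssim R_1^{3d}$), contributes $\lesssim n^{-4-\alpha/d}(\log n)^{c_3}$ by the two geometric facts above. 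The second summand is centred given $X_1$; on the stabilization event the local factor $\Delta_1\varphi(X^A)$ is measurable with respect to $X_1$, $X_1'$ and the $O(\log n)$ points of $X$ and $X'$ lying in balls of radius $\lesssim(\log n/n)^{1/d}$ about $X_1$ and $X_1'$ only, so conditioning $f(X)-\E[f(X)\mid X_1]$ on that local configuration and pulling $\Delta_1\varphi(X^A)^3$ out of the inner expectation leaves an increment of $f(X)$ which, by one further $L^2$ Efron--Stein estimate restricted to those $O(\log n)$ points, has $L^2$-norm $\lesssim n^{-1-\alpha/2d}(\log n)^{O(1)}$; Cauchy--Schwarz then bounds the corresponding contribution by $\lesssim n^{-1-\alpha/2d}(\log n)^{O(1)}\sqrt{\E\Delta_1\varphi(X)^6}\lesssim n^{-4-\alpha/d}(\log n)^{c_3}$. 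Summing the two contributions gives $\tfrac{n}{4\sigma^4}\sup_A\E| f(X)\Delta_1\varphi(X^A)^3|\lesssim\sigma^{-4}n^{-3-\alpha/d}(\log n)^{c_3}$, and absorbing all logarithmic powers into $(\log n)^{3+\alpha/2d+\varepsilon}$ yields \eqref{eq:general-tcl}. Finally, \eqref{eq:volume-moments-upper-bound} follows from the Efron--Stein moment inequality together with the estimate that the $q$-th moment of the $j$-th Efron--Stein difference of $\varphi$ is $\lesssim_q n^{-q-\alpha/d}$, an immediate consequence of the stabilization and boundary-localization facts.
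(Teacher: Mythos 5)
Your overall strategy---apply Theorem \ref{thm:dependancy} to $f=\varphi-\E\varphi$, derive the $\coef,\coeff,\E|\Delta_1 f|^q$ estimates from exponential stabilization of Voronoi radii plus boundary localization and the hypothesis $\Vol(\partial K^{r})\leqslant S_+(K)r^{\alpha}$, and obtain \eqref{eq:volume-moments-upper-bound} via an $L^q$ Efron--Stein inequality---matches the paper's proof in all but one step (the paper uses the Rhee--Talagrand inequality, Lemma \ref{lm:R-T}, for the moment bound; same family of inequalities).

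Where you genuinely diverge is the Kolmogorov-specific cross-term $\E|f(X)\,\Delta_1\varphi(X^A)^3|$, which you correctly identify as the step where Cauchy--Schwarz is too lossy. You attack it via the decomposition $f(X)=\E[f(X)\mid X_1]+\bigl(f(X)-\E[f(X)\mid X_1]\bigr)$ and a decorrelation argument conditioning the centered part on a local (random) configuration near $X_1,X_1'$. The paper instead writes $|f(X)|\leqslant|f(\hat X^{1})|+|D_1 f(X)|$, restricts to the event $\Omega_n(X^A)$ so that $|D_1\varphi(X^A)|\lesssim n^{-1}\rho_n^d$ and $D_1\varphi(X^A)\neq 0$ forces $X_1$ or $X_1'$ into $\partial K^{2n^{-1/d}\rho_n}$, and then simply uses the \emph{independence} of $f(\hat X^{1})$ from $(X_1,X_1')$ to factor $\E\bigl[|f(\hat X^{1})|\,\one_{\{X_1\text{ or }X_1'\in\partial K^{2n^{-1/d}\rho_n}\}}\bigr]\leqslant 2\,\E|f(\hat X^{1})|\cdot\P(X_1\in\partial K^{2n^{-1/d}\rho_n})\lesssim\sigma\,n^{-\alpha/d}\rho_n^{\alpha}$. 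This yields $\E|f(X)\Delta_1\varphi(X^A)^3|\lesssim n^{-3-\alpha/d}\log^{c}(n)\bigl(\sigma\log^{c'}(n)+n^{-1}\bigr)$, which after multiplication by $n/(4\sigma^4)$ produces exactly the second and third summands of \eqref{eq:general-tcl}. Your route aims for the stronger, $\sigma$-free bound $\lesssim n^{-4-\alpha/d}\log^{c}(n)$; if this were rigorous it would still imply \eqref{eq:general-tcl} (the $\sigma^{-3}n^{-2-\alpha/d}$ term is supplied anyway by the last summand of \eqref{eq:graph-bound}), but it is strictly more than is needed. The paper's one-line delete-a-point decomposition plus independence is both shorter and exactly calibrated to the three-term statement.

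Two points in your sketch of the second summand need real care and are not quite filled in. First, the set of indices $J$ whose points lie within the stabilization radius of $X_1,X_1'$ is itself random, so ``conditioning on the local configuration'' is not a clean conditioning on a sub-$\sigma$-field of the $X_j$'s; one would have to fix a deterministic ball and account for the Poisson-like fluctuation of the number of points it contains, or union-bound over radii, before the Hoeffding-orthogonality argument for $\|\E[f(X)\mid X_J]-\E[f(X)\mid X_1]\|_2\lesssim n^{-1-\alpha/2d}(\log n)^{O(1)}$ can be applied. Second, $\E[\Delta_1\varphi(X,X')\mid X_1]$ is \emph{not} supported near $\partial K$: off the boundary neighbourhood it equals the nonzero constant $-\E\bigl[\varphi(X^{1})-\varphi(\hat X^{1})\bigr]$; this constant is itself $O(n^{-1-\alpha/d})$ so your final estimate survives, but the statement as written is inaccurate. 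Neither issue is fatal, but both require more work than the paper's route.
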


  Say that two points $x,y\in [0,1]^{d}$ are {\it Voronoi neighbours} among a point set $X$ if $V(x;X)\cap V(y;X)\neq \emptyset $. More generally, denote $d_{V}(x,y;X)$ the Voronoi distance between $x$ and $y $, i.e. the minimal integer $k\geq 1$ such that  we can form a path $x_{0}=x;x_{1}\in X,\dots ,x_{k-1}\in X,x_{k}=y$ where $x_{i}$ and $x_{i+1}$ are Voronoi neighbours. Denote $v(x,y;X)=\Vol(V(x,(X,y))\cap V(y,X))$ the volume that the cell $V(y,X)$ loses when $x$ is added to $X $. We have the explicit expression, for $x\notin X$,
\begin{align}
\label{eq:explicit-addone}
\varphi (X,x)-\varphi (X)=1_{\{x\in K\}}&\sum_{y\in X\cap K^{c}}v(x,y;X) -1_{\{x\in K^{c}\}}\sum_{y\in X\cap K}v(x,y;X).
\end{align}
Since $v(x,y;X)=0$ if $x$ and $y$ are not Voronoi neighbours in $(X,x,y)$, the concatenation of $X$ with $x$ and $y$,  the following properties hold.
\begin{proposition}Let $X=(X_{i})_{1\leqslant i\leqslant n}$ be  a finite collection of points.
\label{prop:phi-properties}\begin{enumerate}
 \item[{\rm (i)}] For $1\leqslant i\leqslant n$ such that $X_{i} \in K$ (resp. $ K^{c}$), if every Voronoi neighbour of $X_{i}$ among $X$ is also in $K$ (resp. $K^{c}$), then $D_{i}\varphi (X)=0$.
 \item[{\rm (ii)}]  For every point $X_{j} $ at Voronoi distance $>2$ from some $X_{i}\in X$, $D_{i,j}\varphi (X)=0$.
 
 \end{enumerate}
  \end{proposition}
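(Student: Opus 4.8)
The plan is to derive both assertions from the explicit add-one-point formula \eqref{eq:explicit-addone}, together with the locality property recalled immediately after it: $v(x,y;X)=0$ unless $x$ and $y$ are Voronoi neighbours in the configuration in which they sit. In other words, inserting one point into a configuration only reassigns volume between that point and its immediate Voronoi neighbours.

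For part (i), I would express $D_{i}\varphi(X)=\varphi(X)-\varphi(\hat X^{i})$ as the increment obtained by re-inserting $X_{i}$ into $\hat X^{i}$, and apply \eqref{eq:explicit-addone} with base configuration $\hat X^{i}$ and added point $X_{i}$ (legitimately, since a.s. $X_{i}\notin\hat X^{i}$):
\[
D_{i}\varphi(X)=\one_{\{X_{i}\in K\}}\sum_{y\in\hat X^{i}\cap K^{c}}v(X_{i},y;\hat X^{i})-\one_{\{X_{i}\in K^{c}\}}\sum_{y\in\hat X^{i}\cap K}v(X_{i},y;\hat X^{i}).
\]
Since $v(X_{i},y;\hat X^{i})=0$ whenever $y$ is not a Voronoi neighbour of $X_{i}$ in $(\hat X^{i},X_{i})=X$, the hypothesis that all Voronoi neighbours of $X_{i}$ lie in $K$ (resp.\ in $K^{c}$) kills every surviving term, so $D_{i}\varphi(X)=0$.

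For part (ii), I would start from $D_{i,j}\varphi(X)=\big(\varphi(X)-\varphi(\hat X^{j})\big)-\big(\varphi(\hat X^{i})-\varphi(\hat X^{ij})\big)$ and apply \eqref{eq:explicit-addone} to each bracket, in both cases re-inserting $X_{j}$ (over the base $\hat X^{j}$ in the first bracket and over $\hat X^{ij}$ in the second, using $\hat X^{j}=\hat X^{ij}\cup\{X_{i}\}$ and $\hat X^{i}=\hat X^{ij}\cup\{X_{j}\}$). With $\epsilon(y):=\one_{\{X_{j}\in K\}}\one_{\{y\in K^{c}\}}-\one_{\{X_{j}\in K^{c}\}}\one_{\{y\in K\}}$ this gives
\[
D_{i,j}\varphi(X)=\epsilon(X_{i})\,v(X_{j},X_{i};\hat X^{j})+\sum_{y\in\hat X^{ij}}\epsilon(y)\big[v(X_{j},y;\hat X^{j})-v(X_{j},y;\hat X^{ij})\big].
\]
The first term vanishes because $v(X_{j},X_{i};\hat X^{j})\neq 0$ would make $X_{i}$ and $X_{j}$ Voronoi neighbours in $X$, i.e.\ $d_{V}(X_{i},X_{j};X)=1$, contradicting the hypothesis. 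For the sum, I would show each bracket is zero: since $X_{i}$ is not a Voronoi neighbour of $X_{j}$ in $X$, removing it does not change $V(X_{j},\cdot)$, so $V(X_{j},X)=V(X_{j},\hat X^{i})$; and for any $y$ for which one of the two volumes is nonzero — hence a Voronoi neighbour of $X_{j}$ in $X$ or in $\hat X^{i}$ — the point $X_{i}$ cannot be a Voronoi neighbour of $y$ in $\hat X^{j}$, so $V(y,\hat X^{j})=V(y,\hat X^{ij})$; therefore $v(X_{j},y;\hat X^{j})=\Vol\!\big(V(X_{j},X)\cap V(y,\hat X^{j})\big)=\Vol\!\big(V(X_{j},\hat X^{i})\cap V(y,\hat X^{ij})\big)=v(X_{j},y;\hat X^{ij})$, and when both volumes vanish the bracket is trivially $0$.

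The main obstacle is the verification highlighted in that last step. It relies on the elementary but slightly delicate stability principle that inserting a single point into a Voronoi configuration can only destroy an adjacency between two cells if the new point becomes a common Voronoi neighbour of both — and, dually, deleting a point can only create a new adjacency through the deleted point. Granting this, any scenario in which $X_{i}$ were a Voronoi neighbour of $y$ in $\hat X^{j}$ (with $y$ adjacent to $X_{j}$ in $X$ or in $\hat X^{i}$) would exhibit a Voronoi path of length at most $2$ joining $X_{i}$ and $X_{j}$ inside $X$, contradicting $d_{V}(X_{i},X_{j};X)>2$; running through the handful of cases (adjacency $X_{i}\sim y$ already present in $X$, or created by deleting $X_{j}$; adjacency $y\sim X_{j}$ destroyed by inserting $X_{i}$; and the symmetric reductions between $X$ and $\hat X^{i}$) closes the argument. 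One should also note that these adjacency manipulations tacitly assume non-degenerate configurations (no spurious co-circularities), which is automatic for the i.i.d.\ absolutely continuous point inputs to which the proposition is applied.
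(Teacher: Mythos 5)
Your proof is correct and takes essentially the same route the paper intends: the paper presents Proposition~\ref{prop:phi-properties} as an immediate consequence of the add-one-point formula \eqref{eq:explicit-addone} and the locality $v(x,y;X)=0$ for non-neighbouring $x,y$, without spelling out the verification, and your computation is exactly that verification. The one place where the paper's ``obviously'' hides some work is the step you correctly flag in part (ii): showing $v(X_j,y;\hat X^{j})=v(X_j,y;\hat X^{ij})$ requires not only $V(X_j,X)=V(X_j,\hat X^{i})$ (which follows directly from $X_i\not\sim X_j$ plus convexity of Voronoi cells) but also $V(y,\hat X^{j})=V(y,\hat X^{ij})$ for every neighbour $y$ of $X_j$, and the latter is precisely where the hypothesis $d_V(X_i,X_j;X)>2$ (rather than merely $>1$) and the insertion/deletion stability of Voronoi adjacencies are used; your handling of this is sound.
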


\begin{remark}
These properties mean somehow that $\varphi $ is of range $2$ with respect to the Voronoi tessellation. An analogue of Theorem \ref{thm:general-approx}
should hold for any functional with finite range, such as the perimeter approximation induced by $\varphi _{\Per}$. On the other hand, the variance lower bound derived in this section is specific to the volume approximation.\end{remark}

We define for $x\in \mathbb{R}^{d},X=(X_{i})$ a finite collection of points, $ k\geq 1,$
\begin{align*}
R_{k}^{}(x;X)=\sup\{\|{y}-x\|:{ y\in V(X_{i};X)},d_{V}(x,X_{i};X )\leqslant k\}
\end{align*}the distance to the furthest { point in the cell of a } $k$-th order Voronoi neighbour, with $R(x;X):=R_{0}(x;X)$.  If $x$ does not have $k$-th order neighbours,  we put the convention $R_{k}^{}(x;X)=\text{diam}([0,1]^d)=\sqrt{d}$. { We have obviously}\begin{align}
\label{eq:volume-radius}
\Vol(V(x;X))\leqslant \kappa _{d}R(x;X)^{d}, x\in \mathbb{R}^{d},
\end{align}where $\kappa _{d}$ is the volume of the unit sphere in $\mathbb{R}^{d}$.

\begin{proof}[Proof of Theorem \ref{thm:general-approx}.]
We will use Theorem \ref{thm:dependancy} with the functional $f(X)=\varphi (X )-\E \varphi (X )$. 
Let us start with a crucial bound.

\begin{lemma}
\label{lm:variablesUkq}
Assume that (\ref{eq:volume-boundary-upper-bound}) holds.
Define for some $  k\geq 0,$ the random variable 
\begin{align*}
U_{k } =1_{\{d(  X_{1},\partial K)\leqslant R_{k}^{}({ X_{1}};X )\}}R_{k}^{}({  X_{1}};X )^{d}.
\end{align*}  Then for some $c_{d,qd+\alpha,k }>0$,
\begin{align*}
\E U_{k{ }} ^{q}\leqslant S_+(K) c_{d,qd+\alpha,k }n^{-q-\alpha/d },\;n\geq 1,q\geq 1.
\end{align*}
\end{lemma}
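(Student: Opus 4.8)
The plan is to estimate $\E U_k^q$ by separating the event $\{d(X_1,\partial K)\leqslant R_k(X_1;X)\}$ according to how large the local Voronoi radius $R_k(X_1;X)$ is, and then integrating over the position of $X_1$. The key geometric input is that, conditionally on $X_1=x$, the random variable $R_k(x;X)$ has exponentially decaying tails: there exist constants $c_1,c_2>0$ (depending on $d$ and $k$) such that $\P(R_k(x;X)>t \mid X_1=x)\leqslant c_1\exp(-c_2 n t^d)$ for all $t\leqslant \sqrt d$ and $n\geqslant 1$, uniformly in $x\in[0,1]^d$. This is the standard stabilization estimate for Voronoi tessellations of binomial input (a ball of volume $\sim t^d$ around $x$ containing $\gtrsim k+1$ points forces all $k$-th order neighbours to be within distance $O(t)$), and I would either cite it or sketch it via a covering argument: tile a neighbourhood of $x$ by $O(1)$ cones of aperture small enough that a point in each cone at distance $\leqslant t/2$ blocks all cells in that direction; the probability that some cone is empty up to radius $t/2$ is at most $(\text{const})\cdot n^{0}\exp(-c n t^d)$, with a bit of care near the boundary of $[0,1]^d$ (where one uses that the density is bounded below).

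First I would write, for fixed $x$, using the layer-cake / tail formula,
\begin{align*}
\E\big[1_{\{d(x,\partial K)\leqslant R_k(x;X)\}}R_k(x;X)^d \mid X_1=x\big]
&= \int_0^{d^{q/2}} \P\big(1_{\{d(x,\partial K)\leqslant R_k\}}R_k^d > s \mid X_1=x\big)\,ds.
\end{align*}
The integrand vanishes unless $R_k(x;X)\geqslant \max(s^{1/d}, d(x,\partial K))$, so it is bounded by $\P(R_k(x;X)\geqslant d(x,\partial K)\mid X_1=x)\wedge \P(R_k(x;X)\geqslant s^{1/d}\mid X_1=x)$. Feeding in the tail bound gives a factor $c_1\exp(-c_2 n\, d(x,\partial K)^d)$ from the first event and $c_1\exp(-c_2 n s)$ from the second; combining, $\E[U_k\mid X_1=x]\leqslant c\, e^{-c_2 n d(x,\partial K)^d}\int_0^\infty e^{-c_2 n s}\,ds \cdot(\text{const}) \leqslant \frac{c}{n} e^{-c_2 n d(x,\partial K)^d}$, and more generally for the $q$-th moment the same computation with $R_k^{qd}$ produces $\E[U_k^q\mid X_1=x]\leqslant \frac{c_{d,q,k}}{n^q} e^{-c_2 n d(x,\partial K)^d}$.

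Next I would integrate over $x\in[0,1]^d$ with respect to the uniform law. Since the bound only depends on $x$ through $\rho:=d(x,\partial K)$, I split $[0,1]^d$ into the shells $\{ \rho\in(2^{-j-1}r_0,2^{-j}r_0]\}$ for dyadic scales, or more simply use the coarea-type estimate $\Vol(\{x: d(x,\partial K)\leqslant r\})\leqslant S_+(K)r^\alpha$ from \eqref{eq:volume-boundary-upper-bound}: this gives, by Fubini,
\begin{align*}
\int_{[0,1]^d} e^{-c_2 n d(x,\partial K)^d}\,dx
&\leqslant \int_0^\infty c_2 d\, n\, r^{d-1} e^{-c_2 n r^d} \Vol(\partial K^r)\,dr
\leqslant S_+(K)\int_0^\infty c_2 d\, n\, r^{d-1+\alpha} e^{-c_2 n r^d}\,dr,
\end{align*}
and the substitution $u = n r^d$ turns this into $S_+(K)\, c'_{d,\alpha}\, n^{-\alpha/d}$. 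Multiplying by the $n^{-q}$ from the conditional moment bound yields $\E U_k^q\leqslant S_+(K)\, c_{d,qd+\alpha,k}\, n^{-q-\alpha/d}$, which is the claim; tracking the constant through the substitution shows the dependence is genuinely on $qd+\alpha$ as stated.

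The main obstacle is the stabilization tail bound $\P(R_k(x;X)>t\mid X_1=x)\leqslant c_1 e^{-c_2 n t^d}$ uniformly in $x$ and $n$ — in particular getting it right for $k$-th order neighbours (not just nearest ones) and near the boundary of the cube, where the available volume for placing blocking points is only a constant fraction of a ball. The $k$-th order case is handled by iterating the first-order bound: if every point within distance $t/(k+1)$ in each of the $O(1)$ cone directions is present (an event of probability $\geqslant 1-c_1 e^{-c_2 n t^d}$), then one shows by induction on the Voronoi graph distance that every cell within graph-distance $k$ of $x$ is contained in $B(x, Ct)$ for a dimensional constant $C$. Everything else is the routine tail-integration and the application of \eqref{eq:volume-boundary-upper-bound} sketched above.
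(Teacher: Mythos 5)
Your overall strategy coincides with the paper's: reduce everything to an exponential tail bound on the $k$-th order Voronoi radius $R_k$, and then integrate this against $\Vol(\partial K^r)\leqslant S_+(K)r^\alpha$ to pick up the extra factor $n^{-\alpha/d}$. The main technical difference is how the position-dependence of the conditional law of $R_k(x;X)$ on $x\in[0,1]^d$ is handled. You assert a uniform-in-$x$ tail bound $\P(R_k(x;X)>t)\leqslant c_1 e^{-c_2 n t^d}$ directly, with only an informal remark about ``care near the boundary''; the paper instead stationarizes the problem by periodizing the sample, introducing $X'=\bigcup_{m\in\mathbb{Z}^d}(X+m)$ and the dominating radius $\overline{R_k}(x,X)=\sup_{C\in\mathcal{C}_x}R_k(x,X'\cap C)$ whose law is genuinely independent of $x$, thereby bypassing any case analysis at corners and edges of the cube. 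Both routes work (near the boundary only the constant $c_2$ degrades by a dimensional factor), but the periodization trick is cleaner and worth knowing. Your cone/covering sketch of the tail estimate, iterated over $k$ generations, is in the same spirit as the paper's argument, which observes that a chain of $k$ consecutive Voronoi neighbours reaching distance $r$ forces one of the associated empty balls to have radius $\geqslant r/2k$ and centre inside $[-r,r]^d$; yours would need more care to turn into a complete argument, but the idea is sound. One small slip to flag: after correctly noting that the integrand in your layer-cake computation is the \emph{minimum} of two tail probabilities (the event is $\{R_k\geqslant\max(s^{1/d},d(x,\partial K))\}$), you ``combine'' them into a \emph{product}; this is salvageable because $\min(e^{-u},e^{-v})=e^{-\max(u,v)}\leqslant e^{-(u+v)/2}$ at the cost of halving the rate constant, but the product of probabilities is not directly available and the justification should be stated. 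The final Fubini step and the substitution $u=nr^d$ match the paper's computation exactly.
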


  \begin{proof} 
Under this form, it is problematic to give a sharp upper bound because the law of $R_{k}^{}(X_{1};X)$ depends on the position of $X_{1}$ within $[0,1]^d$. To inject some stationarity in the problem, we will bound $R_{k}^{}(X_{1};X)=R_{k}(X_{1};\hat X^{1})$ by introducing a closely related quantity $\overline{R}_{k}(X_{1};\hat X^{1})$ whose conditional law with respect to $\hat X^{1}$ is independent of the value of $X_{1}$. To this end, {\blu introduce the process
\begin{align*}
X'=\bigcup _{m\in \mathbb{Z} ^{d}}(X+m),
\end{align*}which law is invariant under translations. Remark that given any $t\in \mathbb{R}^{d}, X'$ has a.s. exactly $n$ points in $[t,t+1]^{d}$.  For $x\in \mathbb{R}^{d}$, call 
\begin{align*}
\mathcal{C}_{x}=\{[x-t,x-t+1]^{d};t\in [0,1]^{d}\}=\{[y,y+1]^{d}:y\in \mathbb{R}^{d},x\in [y,y+1]^{d}\},
\end{align*}the family of translates of $[0,1]^{d}$ that contain $x$. Then by stationarity of $X'$, the law $\mu _{k,n}$ of 
\begin{align*}
\overline{R_{k}}(x,X):=\sup_{C\in \mathcal{C}_{x}}R_{k}(x,X'\cap C)
\end{align*}does not depend on $x$ (and it is indeed only a function of $x$ and $X$). Also, for $x\in [0,1]^{d}$, $[0,1]^{d}\in \mathcal{C}_{x}$, whence $R_{k}(x,X)\leqslant \overline{R_{k}}(x,X)$.
}This yields \begin{align}
 \label{eq:EUkq}
 \E U_{k}^{q} &\leqslant   \int_{[0,1]^d}dx 1_{\{d(x;\partial K)\leqslant \overline{R}_{k}^{}(x;\hat X^{1})\}}\overline{R}_{k}^{}(x;\hat X^{1})^{qd}\\
  &\leqslant  \int_{\mathbb{R}_{+}\times [0,1]^d}1_{\{d(x,\partial K)\leqslant r\}}r^{qd}\mu _{k,n-1}(dr)dx \\
  &\leqslant  S_+(K)  \E \overline{R}_{k}^{}(0;\hat X^{1})^{qd+\alpha }\quad \text{using (\ref{eq:volume-boundary-upper-bound})}.
\end{align}  
{Let us now bound the probability of the event $\overline{R_{k}}(0,X)\geqslant r$, for some $r\geqslant 0$. If this event is realised, there is a $k$-th order Voronoi neighbour $z\in X'$ of $0$ and a point $y$ in the Voronoi cell of $z$ such that $\|y\|\geqslant r$. There is therefore a sequence of points $x_{1}=0,x_{2}\in X',\dots ,x_{k}=z,x_{k+1}=y$ such that for $i<k$, $x_{i}$ and $x_{i+1}$ are Voronoi neighbours. Since the midpoint $z_{i}$ of $x_{i}$ and $x_{i+1}$ has $x_{i}$ and $x_{i+1}$ as closest neighbours in $(X',0)$, the open ball $B^{o}(z_{i},\|x_{i}-x_{i+1}\|/2)  $ has an empty intersection with $X'$. Since $z$ is the point of $X'$ closest to $y$,  $B^{o}((z+y)/2,\|z-y\|/2)\cap X=\emptyset $ also. We therefore have  $k$ (possibly empty) open balls $B_{1},\dots ,B_{k}$, with respective radii $r_{i},i=1,\dots ,k$, such that $[x_{i},x_{i+1}]$ is a diameter of $B_{i}$, and such that $X'$ has a point in none of them. Since $\|y\|\geqslant r$, the radius of at least one of these balls is larger than $r/2k$. Define 
\begin{align*}
i_{0}:=\min\{1\leqslant i\leqslant k:r_{i}\geqslant r/2k\}.
\end{align*}We have by the triangular inequality $\|x_{i_{0}}\|\leqslant i_{0}r/2k\leqslant r/2$, and the ball $B(x_{i_{0}},r/2k)$ is empty of points of $X'$ and is contained in $[-r,r]^{d}$. It is easy to find  $\gamma _{d}>0$ such that at least one of the cubes $[g,g+\gamma _{d}r]^{d},g\in \gamma _{d} r\mathbb{Z} ^{d}\cap [-r,r]^{d}$ is contained in every ball with radius $r/2$ contained in $ [-r,r]^{d}$. This
yields
\begin{align*}
\P(\overline{R_{k}}(0,X)\geqslant r)&\leqslant \P(\exists g\in \gamma _{d} r\mathbb{Z} ^{d}\cap [-r,r]^{d}: X'\cap [g,g+\gamma _{d}r]^{d}=\emptyset )\\
&\leqslant \#(\gamma _{
d}'\mathbb{Z} ^{d}\cap [-1 ,1 ]^{d})\P([0,0+\gamma _{d}r]^{d}\cap X'=\emptyset ).
\end{align*}
Since $\#[0,0+\gamma _{d}r]^{d}\cap X'\geqslant n$ for $r\geqslant \gamma _{d}^{-1}$ and $X'\cap [0,0+\gamma_{d} r]=X\cap [0,0+\gamma _{d}r]$ for $r\leqslant \gamma _{d}^{-1}$, we finally have 
\begin{align*}
\P(\overline{R_{k}}(0,X)\geqslant r)&\leqslant 2^{d}\gamma _{d}^{-d}(1-\gamma^{d} _{d}r^{d})^{n}\leqslant 2^{d}\gamma _{d}^{-d}\exp(-n\gamma _{d}^{d}r^{d}).
\end{align*}
It then follows that for $u>0$,
\begin{align*}
\E \overline{R_{k}}(0,\hat X^{1})^{u}&=\int_{0}^{\infty }\P(\overline{R_{k}}(0,\hat X^{1})\geqslant r^{1/u})dr\leqslant  2^{d}\gamma _{d}^{-d}\int_{0}^{\infty }\exp(-(n-1)\gamma _{d}^{d}r^{d/u})dr \\
&\leqslant  2^{d}\gamma _{d}^{-d}(n-1)^{-u/d}\int_{0}^{\infty }\exp(-\gamma _{d}^{d}r^{d/u})dr.
\end{align*}
The conclusion follows by reporting this in \eqref{eq:EUkq}.}
 \end{proof}
  Proposition \ref{prop:phi-properties} and   (\ref{eq:volume-radius}) yield
for $q\geq 1$\begin{align*}
{  | \E D_{1}f(X)^{q}}   |  &\leqslant \kappa _{d}^{q} \E U_{1}^{qd}.
\end{align*} 
Lemma \ref{lm:variablesUkq} implies, for $q\geq 1$,
\begin{align}
\label{eq:Delta1-phi}
\E  | {D} _{1}f (X) | ^{q}\leqslant    c_{d,qd+\alpha }\kappa _{d}^{q}S_+(K)    n^{-q-\alpha/d },
\end{align} therefore the second term of the right-hand side of (\ref{eq:general-tcl}) follows immediately from the {\rosso last} estimate in (\ref{eq:graph-bound}). We now state Rhee-Talagrand's inequality \cite{RheTal86}, which then immediately yields (\ref{eq:volume-moments-upper-bound}).
\begin{lemma}[Rhee-Talagrand's inequality]
\label{lm:R-T}
Let $\psi  (X)$ be a symmetric measurable functional {\blu with finite $q$-th moment }. Then for $q\geq 1$ 
\begin{align*}
\E  | \psi  (X)-\E \psi  (X) |^{q} \leqslant n^{q/2}c_{q}\E D_{1} | \psi (X) |  ^{q}
\end{align*}  with $c_{q}=2^q(18\sqrt{q}q')^{q'}$, where $1/q+1/q'=1$. For $q=2$, Stein-Efron's inequality yields the better   constant $c_{2}=1/2$.
\end{lemma}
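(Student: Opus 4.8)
My plan is to run the classical martingale argument behind the Rhee--Talagrand inequality, using the symmetry of $\psi$ to convert the one-step increments of a Doob martingale into increments of the ``remove one point'' operator $D_1$. Introduce an independent copy $X'=(X_1',\dots ,X_n')$ of $X$, the filtration $\mathcal F_i=\sigma(X_1,\dots ,X_i)$ (with $\mathcal F_0$ trivial), and the Doob martingale $M_i=\E[\psi(X)\mid\mathcal F_i]$, so that $M_n=\psi(X)$, $M_0=\E\psi(X)$ and $\psi(X)-\E\psi(X)=\sum_{i=1}^n\xi_i$ with $\xi_i:=M_i-M_{i-1}$ a martingale difference sequence; here $D_1\psi(X)=\psi(X)-\psi(\hat X^{1})$ is the jackknife increment appearing on the right-hand side of the lemma. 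The goal is to prove $\|\xi_i\|_q\le 2\,\|D_1\psi(X)\|_q$ for each $i$ and then feed this into a Burkholder-type $L^q$ inequality for $\sum_i\xi_i$.

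For the increment bound, first I would use that $X_i$ is independent of $\mathcal F_{i-1}$ and equidistributed with $X_i'$ to rewrite $M_{i-1}=\E[\psi(X^{(i)})\mid\mathcal F_i]$, where $X^{(i)}$ is $X$ with its $i$-th coordinate replaced by $X_i'$; hence $|\xi_i|\le\E[\,|\psi(X)-\psi(X^{(i)})|\mid\mathcal F_i\,]$. Passing through the $(n-1)$-point configuration $\hat X^{i}$ and using the triangle inequality, $|\psi(X)-\psi(X^{(i)})|\le|\psi(X)-\psi(\hat X^{i})|+|\psi(\hat X^{i})-\psi(X^{(i)})|$, and by the symmetry of $\psi$ together with the exchangeability of the i.i.d.\ vector $X$ each summand on the right is distributed exactly as $|D_1\psi(X)|$ (for the second one note that $X^{(i)}$ is again $n$ i.i.d.\ points with law $\mu$). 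Removing the conditional expectation by Jensen and then applying Minkowski's inequality gives $\|\xi_i\|_q\le 2\,\|D_1\psi(X)\|_q$; this factor $2$ is exactly what produces the $2^q$ appearing in $c_q$.

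The final step is a martingale moment inequality. For $q\ge 2$ it is routine: the Burkholder--Davis--Gundy inequality gives $\|\sum_i\xi_i\|_q\le b_q\,\|(\sum_i\xi_i^2)^{1/2}\|_q$ with $b_q=O(\sqrt q)$, and Minkowski's inequality in $L^{q/2}$ then bounds $\|(\sum_i\xi_i^2)^{1/2}\|_q=\|\sum_i\xi_i^2\|_{q/2}^{1/2}\le(\sum_i\|\xi_i\|_q^2)^{1/2}\le 2\sqrt n\,\|D_1\psi(X)\|_q$; raising to the $q$-th power yields the claim with a constant of order $(2b_q)^q$, comfortably below $2^q(18\sqrt q\,q')^{q'}$. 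For $q=2$ I would instead invoke the Efron--Stein inequality (\cite[Chapter 3]{BLMbook}) applied to the removal increments, which are all equidistributed by symmetry, and which gives the sharper constant $c_2=1/2$ directly without going through BDG.

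The main obstacle is the sub-quadratic range $1\le q<2$: there Minkowski's inequality in $L^{q/2}$ runs the wrong way, so the passage from a one-step increment bound to an $L^q$-bound of order $n^{q/2}$ is no longer an elementary manipulation, and one must appeal to the refined martingale inequality of Rhee and Talagrand \cite{RheTal86}, which is designed precisely to reproduce the rate $n^{q/2}$ together with the factor $(18\sqrt q\,q')^{q'}$. Everything else in the argument is bookkeeping with H\"older's and Minkowski's inequalities.
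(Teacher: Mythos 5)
The paper does not actually supply a proof of Lemma~\ref{lm:R-T}; it simply quotes the result from the cited reference \cite{RheTal86} and uses it as a black box. So there is no ``paper proof'' to compare against. What you have written is, in spirit, the standard way of deriving such a result: centre via the Doob martingale $M_i=\E[\psi(X)\mid\mathcal F_i]$, dominate the increments using symmetry and exchangeability by $2\,\|D_1\psi(X)\|_q$, and then invoke an $L^q$ martingale moment inequality to obtain the rate $n^{q/2}$. The reduction from a martingale-increment bound to the jackknife operator $D_1$ (passing through the $(n-1)$-point configuration $\hat X^i$ and using that $X^{(i)}$ is again an i.i.d.\ $n$-vector) is correct and is indeed the crux of turning a generic martingale estimate into the statement of the lemma.

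Two quantitative points in your write-up, however, do not hold up. First, for $q\ge 2$ the Burkholder constant in $\|M_n\|_q\le b_q\,\|S_n\|_q$ (with $S_n=(\sum_i\xi_i^2)^{1/2}$) grows \emph{linearly} in $q$, not like $\sqrt q$ — the sharp constant is $q-1$ — so the constant your route produces is of order $(2(q-1))^q$, which is \emph{far above} the stated $c_q=2^q(18\sqrt q\,q')^{q'}$ (which is only of order $2^q q$ for large $q$). The claim that BDG lands ``comfortably below'' $c_q$ is therefore wrong; you recover the correct $n^{q/2}$ rate but not the stated constant. Second, the assertion that Efron--Stein yields $c_2=1/2$ for the \emph{removal} operator $D_1$ is not correct as stated: taking $\psi(X)=\sum_iX_i$ with centred, unit-variance i.i.d.\ entries gives $\var\psi(X)=n$ while $n\cdot\frac12\cdot\E|D_1\psi(X)|^2=n/2$. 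The constant $1/2$ belongs to the \emph{replacement} form of Efron--Stein (with $\Delta_1$, not $D_1$); with $D_1$ the best constant from Efron--Stein is $1$. (This slip is inherited from the paper's own statement, so it is not a flaw you introduced — but it should not be repeated in a proof.) Finally, for $1\le q<2$ you correctly observe that the $L^{q/2}$-Minkowski step reverses direction, and your proposal to fall back on the martingale inequality of Rhee and Talagrand is exactly what the paper does by citing \cite{RheTal86}; this is a deferral to an external result, not a self-contained argument, so be explicit that this sub-quadratic range is precisely the content of the cited theorem and is not re-proved here.
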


{
Let us bound the two first terms of \eqref{eq:graph-bound}.
We need for that to control  the maximum radius of Voronoi cells over $X$. 
We first introduce the event on the circumscribed radii of the Voronoi spheres, 
\begin{equation*}
\Omega _{n}(X)=\left( \max_{1\leqslant j\leqslant n}(R(X_{j};X ))\leqslant n^{-1/d}\rho_{n} \right)
\end{equation*}     where $\rho_{n}=\log(n)^{1/d+\varepsilon '}$ for $\varepsilon '$ sufficiently small.
We have the following lemma, proved later for the sake of readability.
\begin{lemma}\label{lm:omega_{n}}For all $\eta >0,$ $n^{\eta }\P(\Omega _{n}(X)^{c})\to 0$ as $n\to \infty $. 
\end{lemma}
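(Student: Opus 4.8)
The goal of Lemma \ref{lm:omega_{n}} is to show that the probability that some Voronoi cell among $X$ has circumscribed radius exceeding $n^{-1/d}\rho_n$ decays faster than any polynomial in $n$; since $\rho_n$ is polylogarithmic this is a classical "maximal cell" estimate. The plan is to reduce the event $\Omega_n(X)^c$ to the existence of an empty ball of radius $\sim n^{-1/d}\rho_n$ inside (a slightly enlarged version of) $[0,1]^d$, and then to cover the domain by $O(n)$ small cubes so that a union bound can be applied, each term being a "void probability" of a cube of side $\sim n^{-1/d}\rho_n$ that is exponentially small in $\rho_n^d = \log(n)^{1+d\varepsilon'}$.

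\begin{proof}[Proof of Lemma \ref{lm:omega_{n}}]
Set $r_n = n^{-1/d}\rho_n$. If $\Omega_n(X)^c$ occurs, there is an index $j$ and a point $y$ in the Voronoi cell $V(X_j;X)$ with $\|y-X_j\| > r_n$; in particular the ball $B^o(z, \|y - X_j\|/2)$ centred at the midpoint $z = (X_j+y)/2$ contains no point of $X$, and has radius $>r_n/2$. Arguing as in the proof of Lemma \ref{lm:variablesUkq}, one can find a dimensional constant $\gamma_d>0$ such that every ball of radius $r_n/2$ intersected with $[0,1]^d$ contains a cube of the form $[g,g+\gamma_d r_n]^d$ with $g\in \gamma_d r_n \mathbb{Z}^d$ (here one uses that $y$ may be taken in $[0,1]^d$, so that $z$ is within $\sqrt d$ of $[0,1]^d$, and a ball of radius $r_n/2$ around it still carries such a grid cube provided $r_n$ is below a dimensional threshold, which holds for $n$ large). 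Hence
\begin{align*}
\P(\Omega_n(X)^c) &\leqslant \P\big(\exists\, g\in \gamma_d r_n\mathbb{Z}^d\cap [-1,2]^d : X\cap [g,g+\gamma_d r_n]^d = \emptyset\big)\\
&\leqslant \#\big(\gamma_d r_n \mathbb{Z}^d\cap [-1,2]^d\big)\cdot \big(1-(\gamma_d r_n)^d\big)_+^{n},
\end{align*}
where we used that each $X_i$ is uniform on $[0,1]^d$, so that the probability a given grid cube of side $\gamma_d r_n$ avoids all $n$ points is at most $(1-\Vol([g,g+\gamma_d r_n]^d\cap[0,1]^d))^n\leqslant (1-(\gamma_d r_n)^d)_+^n$ whenever the cube meets $[0,1]^d$ (and $\gamma_d r_n\le 1$).

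It remains to estimate the two factors. The cardinality of the grid is $O(r_n^{-d}) = O(n/\rho_n^d)\le C_d\, n$. For the void factor, since $(\gamma_d r_n)^d = \gamma_d^d\, n^{-1}\rho_n^d$, we get
\begin{align*}
\big(1-(\gamma_d r_n)^d\big)_+^{n} \leqslant \exp\!\big(-n(\gamma_d r_n)^d\big) = \exp\!\big(-\gamma_d^d\,\rho_n^d\big) = \exp\!\big(-\gamma_d^d \log(n)^{1+d\varepsilon'}\big).
\end{align*}
Therefore $\P(\Omega_n(X)^c)\leqslant C_d\, n\,\exp(-\gamma_d^d \log(n)^{1+d\varepsilon'})$, and for any fixed $\eta>0$,
\begin{align*}
n^{\eta}\,\P(\Omega_n(X)^c)\leqslant C_d\, n^{1+\eta}\exp\!\big(-\gamma_d^d\log(n)^{1+d\varepsilon'}\big)\xrightarrow[n\to\infty]{}0,
\end{align*}
because $\log(n)^{1+d\varepsilon'}$ grows strictly faster than $(1+\eta)\log n$. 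This proves the claim.
\end{proof}

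The only genuinely delicate point is the geometric reduction in the first paragraph: one must make sure that an empty ball witnessing a large Voronoi cell really does force an empty axis-parallel grid cube of comparable size inside a bounded enlargement of $[0,1]^d$, uniformly in $n$ large; this is exactly the elementary covering argument already used in the proof of Lemma \ref{lm:variablesUkq}, applied here with $k=0$, and it is where all dimensional constants are absorbed. Everything after that is a union bound together with the inequality $1-x\le e^{-x}$ and the fact that a polylogarithm beats a logarithm.
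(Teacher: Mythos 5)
Your proof follows the paper's argument essentially verbatim: reduce $\Omega_n(X)^c$ to an empty ball of radius $> n^{-1/d}\rho_n/2$ centred in $[0,1]^d$, extract an empty axis-parallel grid cube of side $\gamma_d n^{-1/d}\rho_n$ inside it, and take a union bound over $O(n)$ grid cubes each with void probability $\leqslant \exp(-\gamma_d^d\rho_n^d)=\exp(-\gamma_d^d\log(n)^{1+d\varepsilon'})$, which beats any power of $n$. There is one sign slip you should correct: for a cube only partially meeting $[0,1]^d$ one has $\Vol([g,g+\gamma_d r_n]^d\cap[0,1]^d)\leqslant(\gamma_d r_n)^d$, so the inequality $(1-\Vol([g,g+\gamma_d r_n]^d\cap[0,1]^d))^n\leqslant(1-(\gamma_d r_n)^d)_+^n$ that you invoke is in fact reversed for such cubes. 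The repair costs nothing: since $z=(X_j+y)/2\in[0,1]^d$ (both $X_j$ and $y$ lie in $[0,1]^d$, as $V(X_j;X)\subseteq[0,1]^d$ by definition) and your covering step already produces a grid cube inside $B^o(z,r_n/2)\cap[0,1]^d\subseteq[0,1]^d$ — exactly as the paper records — one simply restricts the union bound to those $g$ with $[g,g+\gamma_d r_n]^d\subseteq[0,1]^d$, for which $\Vol([g,g+\gamma_d r_n]^d\cap[0,1]^d)=(\gamma_d r_n)^d$ exactly and the void probability bound holds with equality; the count of such $g$ is still $O(n)$ and the final estimate is unchanged.
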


To bound the first term of  \eqref{eq:graph-bound}, {let   $Y,Y',Z$ be recombinations of $\{X,X',\tilde X\}$. Introduce the event $\Omega :=\Omega _{n}(Y)\cap \Omega _{n}(Y')\cap \Omega_{n}(Z)\cap \Omega _{n}(Z') $ which satisfies $\P(\Omega ^{c})\leqslant 4\P(\Omega _{n}(X)^{c})$.} Recall the fact that $D_{ij}f(X)$ can only be non-zero if $X_{j}$ is at Voronoi distance $\leqslant 2$ from $X_{i}$, and that $D_{j}f(X)$ can only be non-zero if $X_{j}$   has a Voronoi neighbour which cell touches $\partial K$. { In the notation of \eqref{eq:graph-bound}, we have\begin{align*}
 \E  1_{\{D_{1,2}\varphi (Y)\neq 0\}}D_{1}\varphi (Z)^{4} &\leqslant \E 1_{\Omega  }1_{\{D_{1,2}\varphi (Y)\neq 0\}}D_{1}\varphi (Z)^{4}+\P(\Omega ^{c})\\
&\leqslant \kappa _{d}^{4}n^{-4}\rho_{n}^{4d}\E [1_{\{d(Y_{1},\partial K)\leqslant 2n^{-1/d}\rho_{n}\}}\E[1_{\{\|Y_{1}-Y_{2}\|\leqslant 2n^{-1/d}\rho_{n}\}} |  Y_{1} ]]+\P(\Omega ^{c})\\
&\leqslant \kappa _{d}^{5}n^{-4}\rho_{n}^{4d}2^{d}n^{-1}\rho_{n}^{d}\P(d(Y_{1},\partial K)\leqslant 2n^{-1/d}\rho_{n})+\P(\Omega ^{c})\\
&\leqslant C_{1,2}n^{-5-\alpha /d}\rho_{n}^{5d+\alpha }
\end{align*}for some $C_{1,2}\geq 0$, whence {\blu Proposition} \ref{r:zee} and \eqref{eq:coef-CS} yield {\blu $n \coef\leqslant C'n^{-4-\alpha /d}\rho _{n}^{5d+\alpha } $ for some $C'>0$}. With a similar computation,
\begin{align*}
 \E \mathbf{1}_{\{\Omega \}}&\mathbf{1}_{\{D_{1,2}\varphi (Y)\neq 0,D_{1,3}\varphi (Y')\neq 0\}}D_{2}\varphi (Z)^{4}\\
&\leqslant \kappa _{d}^{4}n^{-4}\rho _{n}^{4d} \P({\|Y_{1}-Y_{2}\|\leqslant 2n^{-1/d}\rho_{n},\|Y'_{1}-Y'_{3}\|\leqslant 2n^{-1/d}\rho_{n}},{d(Y_{1},\partial K)\leqslant 2n^{-1/d}\rho_{n}})+\P(\Omega ^{c})\\
&\leqslant C_{2,3}n^{-6-\alpha /d}\rho_{n}^{6d+\alpha },
\end{align*}from where {\blu $n^{2}\coeff\leqslant C''n^{-4-\alpha /d}\rho _{n}^{6d+\alpha } $ for some $C''>0$}. Therefore the first term of \eqref{eq:graph-bound} is bounded by 
\begin{align*}
\sigma ^{-2}\sqrt{n}(n^{-2-\alpha /2d})\log(n)^{3+\alpha /2d+d\varepsilon' /2}
\end{align*}up to a constant, which yields the first term of \eqref{eq:general-tcl}.} It remains to bound the term
\begin{align*}
\E  | f(X) |  | D _{j}f(X^{A} ) | ^{3}
\end{align*}from (\ref{eq:graph-bound}). 
 Recall that under $\Omega _{n}(X^{A})$, all Voronoi cells volumes, and therefore all $ | D _{j}f(X^{A}) | $,{\blu $1\leqslant j\leqslant n$}, are bounded by $\kappa _{d}n^{-1}\rho_{n}^{d}$, and also, $D_{j}f(X^{A})=0$  if $X_{j}$ and $X_{j}'$ are  at distance more than $2n^{-1/d}\rho_{n}$ from $K's$ boundary. We have  
\begin{align*}
\E  | f(X)D _{j}f(X^{A}) | ^{3}&\leqslant \E \left(  | f(X) |  | D _{j}f(X^{A}) |^{3}1_{\Omega _{n}}(X^{A}) \right)+\P(\Omega _{n}(X)^{c})\\
 &\leqslant cn^{-3  }\rho_{n}^{3d} \E \left[  | f(X) | 1_{\{X_{j}\text{ or }X_{j}'\in \partial  K^{2n^{-1/d  }\rho_{n}}\}} \right]+\P(\Omega _{n}(X)^{c})\\
 &\leqslant cn^{-3  }\rho_{n}^{3d}\E \left( \left(  | f(\hat X^{j}) | + | D_{j}f(X) |  \right)1_{\left\{X_{j}\text{ or }X_{j}'\in \partial  K^{2n^{-1/d }\rho_{n}}\right\}}  \right)+\P(\Omega _{n}(X)^{c}) .
 \end{align*}  We have 
\begin{align*}
\E  | D_{j}f(X)|  \leqslant c' n^{-1-\alpha /d}
\end{align*}by (\ref{eq:Delta1-phi}), while the other term is bounded by independence by
\begin{align*}
\E  | f(\hat X^{j}) | 1_{\left\{X_{j}\text{ or }X_{j}'\in \partial  K^{2n^{-1/d }\log(n)}\right\}}&\leqslant 2 \E  | f(\hat X^{j}) | \P\left(   {X_{j}\in \partial  K^{2n^{-1/d  }\rho_{n}}}  \right) \\
&\leqslant c''\sigma  n^{-\alpha /d }\rho_{n}^{\alpha }.
\end{align*}Finally, for some $C>0$,
\begin{align*}
\E  | f(X)D _{j}f(X^{A}) | ^{3}\leqslant Cn^{-3- \alpha /d  }\log(n)^{3+\varepsilon /2}(\sigma  \log(n)^{\alpha/d+\varepsilon /2 }+n^{-1}),
\end{align*} 
which gives the desired bound.
}
 
\end{proof}

  \begin{proof}[Proof of Lemma \ref{lm:omega_{n}}]
  
  We can find a constant $\gamma _{d}>0$ such that the intersection with $[0,1]^{d}$ of every ball centred in $[0,1]^{d}$ of radius ${\blu r\leqslant 1}$ contains a cube $g+[0,\gamma _{d}{\blu r}]^{d}$ for some $g\in \gamma _{d}{\blu r}\mathbb{Z} ^{d}$. If $\max_{1\leqslant j\leqslant n} R(X_{j};X)>n^{-1/d}\rho _{n}$, then two Voronoi neighbours $X_{i},X_{j}$ are at distance more than $n^{-1/d}\rho _{n}$ from one another, and the open ball with diameter $[X_{i},X_{j}]$ does not contain points of $X$, by the construction of  the Voronoi tessellation. It follows that a cube $g+[0,\gamma _{d}n^{-1/d}\rho _{n}]^{d}\subseteq [0,1]^{d}$ is empty of points of $X$, for some $g\in \gamma _{d}n^{-1/d}\rho _{n}\mathbb{Z} ^{d}$, and this event  happens with a probability bounded by 
\begin{align*}
(\gamma _{d}n^{-1/d}\rho _{n})^{-d}\P([0,\gamma _{d}n^{-1/d}\rho _{n}]^{d}\cap X=\emptyset )&\leqslant \gamma _{d}^{-d}n\rho _{n}^{-d}(1-\gamma _{d}^{d}n^{-1}\rho _{n}^{d})^{n}\\
  &\leqslant \gamma _{d}^{-d}n\rho _{n}^{-d}\exp(n\log(1-\gamma _{d}^{d}n^{-1}\rho _{n}^{d}))\\
  &\leqslant \gamma _{d}^{-d}n\rho _{n}^{-d}\exp(- \gamma _{d}^{d} \log(n)^{1+d\varepsilon '}),
\end{align*}
which proves the result.\end{proof}

 \begin{proof}[Proof of Theorem \ref{thm:voronoi-tcl}]

It only remains to prove the lower bound on the variance in (\ref{eq:hyp-lower-bound-boundary}).
Lemma \ref{lem:var-lower-bound}  states that the variance is larger than $n\|h\|_{L^{2}([0,1]^d,\ell)}^{2}$, where 
\begin{align*}
h(x)=\E \varphi  (\hat X^{1},x)-\E\varphi  ( X),\quad x\in [0,1]^d.
\end{align*}
We decompose $h$ as follows:
\begin{align}
\label{eq:somme}
\nonumber h(x)&=(\E \varphi  (\hat X^{1},x)-\varphi  (\hat X^{1})-(\E\varphi  (X)-\varphi  (\hat X^{1} )), x\in [0,1]^d\\
&=:h_{1}(x)-h_{2}.
\end{align}
Voronoi volume approximation is not homogeneous in the sense that points falling close to $K$'s boundary have more influence than other points of $X_{n}$. The following lemma shows that this inhomogeneity makes $h_{1}$ the dominant term in the previous decomposition.

\begin{lemma}
\label{lm:variance-lower-bound}Let $K$ be a measurable subset of $[0,1]^d$, define $h_{1}$ as in (\ref{eq:somme}). Then we have\begin{align*}
\int_{[0,1]^{d}}h_{1}(x)^{2}dx\geq C_{d} (\gamma  (K,n^{-1/d})+\gamma (K^{c},n^{-1/d}))n^{-2}  
\end{align*}for some $C_{d}>0$.
\end{lemma}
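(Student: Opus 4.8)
## Proof proposal for Lemma \ref{lm:variance-lower-bound}

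The plan is to obtain the lower bound on $\int h_1(x)^2\,dx$ by restricting the integral to the "outer boundary layer" $\partial K^{r}_+$ (with $r=n^{-1/d}$) and showing that for $x$ in this region, $h_1(x)$ is bounded below by a constant multiple of $n^{-1}$ times the normalized local volume $\Vol(B(x,\beta r)\cap K)/r^d$ — after which the definition of $\gamma(K,r)$ does the rest, and the symmetric contribution of $\partial (K^c)^r_+$ accounts for the $\gamma(K^c,r)$ term. First I would write, using the explicit add-one formula \eqref{eq:explicit-addone}, that for $x\in K^c$,
\begin{align*}
-h_1(x) = \varphi(\hat X^1,x)-\varphi(\hat X^1) + (\E\varphi(X)-\E\varphi(\hat X^1)) = -\sum_{y\in \hat X^1\cap K} v(x,y;\hat X^1) + o(\cdot),
\end{align*}
so that $h_1(x) = \E\big[\sum_{y\in \hat X^1\cap K} v(x,y;\hat X^1)\big] - (\E\varphi(X)-\E\varphi(\hat X^1))$; note the sign is definite — inserting a point $x\in K^c$ can only \emph{remove} volume previously assigned to $K$-cells, so the sum term is nonnegative. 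The key positivity input is then: for $x\in \partial K^r_+$, the quantity $\sum_{y\in \hat X^1\cap K} v(x,y;\hat X^1)$ is, with probability bounded below by a constant, at least a constant multiple of $\Vol(B(x,\beta r)\cap K)$ when $R(x;(\hat X^1,x))\lesssim r$, because the Voronoi cell of the newly inserted $x$ contains a ball of radius $\sim r$ and the portion of that cell lying in $K$ was previously covered by $K$-labelled cells.

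The main steps in order would be: (1) Expand $h_1(x)$ via \eqref{eq:explicit-addone} and record that it has a sign for $x\in K^c$ (resp. $x\in K$); (2) Lower-bound $\E[\sum_{y\in \hat X^1\cap K}v(x,y;\hat X^1)]$ for $x\in\partial K^r_+$: condition on the event that there are no points of $\hat X^1$ in $B(x,c_d r)$ for a small dimensional constant $c_d$ — this has probability bounded below by a positive constant uniformly in $n$ since the expected number of such points is $O(1)$ — and on this event the cell $V(x;(\hat X^1,x))$ contains $B(x, c_d r/2)$, so the volume it "steals" from $K$-cells is at least $\Vol(B(x,c_d r/2)\cap K)$, which by a Vitali/comparison argument is $\gtrsim \Vol(B(x,\beta r)\cap K)$ for $\beta$ fixed (choosing $c_d$ relative to $\beta$, or absorbing constants into $C_d$); (3) Control the "centering" term $\E\varphi(X)-\E\varphi(\hat X^1) = \E[D_1\varphi(X)]$: by Lemma \ref{lm:variablesUkq} (with $q=1$, $k$ small) and Penrose's result \eqref{eq:penrose} this is $O(n^{-1})$ uniformly, but more importantly it does not depend on $x$, so it can be compared against the $x$-dependent main term; the subtlety is that we need the main term to \emph{dominate} the centering term on a non-negligible part of $\partial K^r_+$, which is exactly where the local density ratio $\Vol(B(x,\beta r)\cap K)/r^d$ enters and where homogeneity would fail; (4) Square, integrate over $x\in \partial K^r_+$, and recognize $\int_{\partial K^r_+}(\Vol(B(x,\beta r)\cap K)/r^d)^2\,dx = \gamma(K,r)$, giving $\int h_1^2 \gtrsim \gamma(K,r)\,n^{-2}$; (5) Repeat verbatim with the roles of $K$ and $K^c$ swapped to pick up $\gamma(K^c,r)$, and add.

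The hard part will be Step (3): showing that the deterministic centering constant $\E[D_1\varphi(X)]$ does not cancel the main term. The clean way around this is to avoid subtracting it pointwise — instead use that for $x$ ranging over $\partial K^r_+$ the main term $\E[\sum_{y\in\hat X^1\cap K}v(x,y;\hat X^1)]$ is itself of order at most $n^{-1}$ (same order as the centering constant, by the $R(x;\cdot)^d$ bound and Lemma \ref{lm:variablesUkq}), so $h_1(x)$ could a priori be small. One resolves this by noting that the \emph{average} over the region where the local density ratio is comparable to its typical value must produce a term genuinely of size comparable to that density ratio times $n^{-1}$; concretely, one can integrate against the signed structure — $h_1(x)\geq 0$ for $x\in K^c$ near $\partial K$ with the sign switching across $\partial K$ — and use that the constant $\E[D_1\varphi(X)]$ integrates against a mean-zero-ish profile, or simply observe that $\|h_1\|_2 \geq \|h_1 - \bar h_1\|_2$ is not the right move and instead that since $h_1 - (\text{const})$ equals the genuinely $x$-dependent conditional expectation, and $\int_{[0,1]^d}(g(x) - c)^2 dx$ is minimized at $c = \bar g$, one gets $\|h_1\|_2 \geq \|\mathbb E[\sum v] - \overline{\mathbb E[\sum v]}\|_2$, and the oscillation of $x\mapsto \E[\sum_y v(x,y;\hat X^1)]$ across the boundary layer is controlled below by the oscillation of $x\mapsto \Vol(B(x,\beta r)\cap K)/r^d$, which is exactly what $\gamma$ measures. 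I would expect the paper to handle this by a direct integral comparison localized to dyadic-type boundary annuli, keeping all constants dimensional.
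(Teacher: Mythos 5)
Your geometric core (Steps 1, 2, 4, 5) matches the paper: restrict to $\partial K^{r}_+$ with $r=n^{-1/d}$, exploit that the add-one increment is one-signed there, lower-bound the expected stolen Voronoi volume by a constant times $n^{-1}\cdot\Vol(B(x,\beta r)\cap K)$, square and integrate, then swap $K\leftrightarrow K^c$. However, you have misread the definition of $h_1$, and this misreading is what generates your ``hard part'' (Step 3), which is not actually part of this lemma.

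In \eqref{eq:somme} the split is
\[
h(x)=\underbrace{\E\big[\varphi(\hat X^1,x)-\varphi(\hat X^1)\big]}_{=:h_1(x)}-\underbrace{\E\big[\varphi(X)-\varphi(\hat X^1)\big]}_{=:h_2},
\]
so $h_1(x)$ does \emph{not} contain the centering constant. You wrote $h_1(x)=\E\big[\sum_{y\in\hat X^1\cap K}v(x,y;\hat X^1)\big]-\big(\E\varphi(X)-\E\varphi(\hat X^1)\big)$, which is in fact $h(x)$, not $h_1(x)$; the extra term you carry around is $-h_2$. Once you use the correct $h_1$, formula \eqref{eq:explicit-addone} gives, for $x\in K^c$,
\[
|h_1(x)|=\E\sum_{j\geq 2}\one_{\{X_j\in K\}}\,v(x,X_j;\hat X^1)=(n-1)\,\E\big[\one_{\{X_2\in K\}}v(x,X_2;\hat X^1)\big],
\]
a sum of nonnegative terms with nothing subtracted. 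There is no cancellation to worry about, no oscillation argument, no comparison between an $x$-dependent main term and an $x$-independent constant. Your Step (3) (and the long paragraph it spawns, with $\|h_1\|_2\geq\|h_1-\bar h_1\|_2$, ``mean-zero-ish profile,'' etc.) is solving a problem that does not exist in this lemma. The role you assign to $h_2$ — showing it does not swamp the signal — is exactly what the surrounding proof of Theorem \ref{thm:voronoi-tcl} does, where $|h_2|=O(n^{-1-\alpha/d})$ (from Lemma \ref{lm:variablesUkq} with $q=1$) is compared against $\|h_1\|_{L^2}\gtrsim n^{-1-\alpha/2d}$; the whole point of isolating $h_1$ in the lemma is to avoid having to do that comparison pointwise.

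Two smaller remarks. The sign is the opposite of what you wrote: inserting $x\in K^c$ can only remove volume previously labelled in $K$, so $h_1(x)\leq 0$ on $\partial K^r_+$, not $\geq 0$ (harmless, since you square). In Step (2), the paper's version of the geometric estimate conditions on $B(y,6\beta n^{-1/d})$ being empty of $\hat X^{1,2}$ — so that $V(y;\hat X^{1,2})$ contains $B(y,3\beta n^{-1/d})$ — and then shows the overlap $V(x,(\hat X^{1,2},y))\cap V(y;\hat X^{1,2})$ contains a ball of radius $\beta n^{-1/d}$. Your variant, conditioning on a ball around $x$ being empty and claiming $V(x;\cdot)$ ``steals'' at least $\Vol(B(x,c_d r/2)\cap K)$ from $K$-cells, glosses over the fact that the volume removed from $K^{\hat X^1}$ is $\Vol\big(K^{\hat X^1}\cap V(x;\cdot)\big)$, not $\Vol\big(K\cap V(x;\cdot)\big)$; these need not coincide, and the paper's route through the integral $\int_{y\in K}\E\,v(x,y;\hat X^{1,2})\,dy$ restricted to $\|y-x\|\leq\beta n^{-1/d}$ sidesteps this cleanly.
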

Let us first conclude the proof of Theorem \ref{thm:voronoi-tcl}. If the weak  rolling ball condition is satisfied along with  (\ref{eq:hyp-lower-bound-boundary}), it yields  \begin{align*}
\int_{[0,1]^d}h_{1}(x)^{2}dx&\geq C_{d} S_-(K) \gamma (K)(n^{-1/d})^{\alpha }n^{-2}.
\end{align*}
According to Lemma \ref{lm:variablesUkq}, $h_{2} =  O(n^{-1-\alpha /d})$, which is indeed negligible with respect to $\|h_{1}\|_{L^{2}}\geq C_{d,K}n^{-1-\alpha /2d}$.
 \end{proof}
\begin{proof}[Proof of Lemma \ref{lm:variance-lower-bound}]
 It follows from (\ref{eq:explicit-addone}) that for $x\in K^{c}$ 
 \begin{align*}
 | \varphi  (x,\hat X^{1})-\varphi  (\hat X^{1}) | &=\sum_{j=2}^{n} 1_{ \{X_{j}\in K\}}v(x,X_{j};\hat X^{1}), \end{align*}  where we notice that the summand distribution does not depend on $j$. Then \begin{align*}
 | h_{1}(x) | &\geq 1\left(x\in  {\partial K_{+} ^{n^{-1/d}}} \right)(n-1)\E  1_{\{X_{2}\in K\}}v(x,X_{2};\hat X^{1})\\
 & \geq 1\left(x\in  {\partial K_{+} ^{ n^{-1/d}}}  \right)(n-1)   \E\int_{y\in  K } v(x,y;\hat X^{1,2})dy \\ 
 &\geq  1\left(x\in  {\partial K_{+} ^{n^{-1/d}}}  \right)(n-1)\Vol( B {(x,\beta n^{-1/d})}\cap K) \inf_{y:\|y-x\|\leqslant \beta  n^{-1/d}  }\E v(x,y;\hat X^{1,2}).\\
 \end{align*}

 If for some $y\in [0,1]^{d},\varepsilon >0$, no point of $\hat X^{1,2}:=(X_{i})_{i\neq 1,2}$ falls in $B(y,6\varepsilon )$, then $B(y,3\varepsilon )\subset V(y,\hat X^{1,2})$. If furthermore $x\in [0,1]^{d}$ lies at distance less than $\varepsilon $ from $y$, then with $z=x+\varepsilon \|x-y\|^{-1}(x-y)$, $$B(z,\varepsilon )\subset V(x,(\hat X^{1,2}, y))\subset B(y,3\varepsilon )\subset V(y; \hat X^{1,2}),$$
and therefore $v(x,y;\hat X^{1,2})\geq \kappa _{d}\varepsilon ^{d}$. We finally have 
\begin{align*}
 \inf_{y:\|y-x\|\leqslant \beta n^{-1/d}  }\E v(x,y;\hat X^{1,2})\geq \kappa _{d}\beta ^{d}n^{-1}\P(\hat X^{1,2}\cap B(y,6\beta n^{-1/d} )=\emptyset )\geq c_{d}'n^{-1}
\end{align*}for some $c_{d}'>0$.
With a completely similar result for $x\in K$, we have for some $c_{d}''>0$
\begin{align*}
\int_{W}h_{1}(x)^{2}dx&\geq  c_{d}^{''} \left( \int_{\partial K_{+}^{n^{-1/d}}}\Vol(B(x,\beta n^{-1/d})\cap K) ^{2}dx+\int_{\partial K_{-}^{n^{-1/d}}}\Vol(B(x,\beta n^{-1/d})\cap K^{c}) ^{2}dx \right).
\end{align*}

\begin{remark}
\label{rk:optimal}
All three terms of (\ref{eq:graph-bound}) give in the case of Theorem \ref{thm:voronoi-tcl} a bound of order $n^{-1/2+\alpha /2d }\log(n)^{q}$ for some $q>0$. In these conditions it seems hard to reach a Berry-Essen bound negligible with a better magnitude than $n^{-1/2+\alpha /2d}$, but removing the $\log $ is an open problem.
\end{remark}

\end{proof}

\subsection{Covering processes}
\label{sec:covering}
\newcommand{\F}{\mathscr{F}}
\newcommand{\K}{\mathcal{K}}

\newcommand{\sK}{\mathscr{K}}
Let  $(\K,\sK)$ be the space of  compact subsets of $\mathbb{R}^{d}$, endowed with the hit-and-miss topology and a Borel  probability measure $\nu $. Let $E_{n}$ be a cube of volume $n$, and $C_{1},\dots ,C_{n}$ iid uniform variables in $E_{n}$, called the \emph{germs}.
Let $n$ iid compact sets $K_{1},\dots ,K_{n}$ be distributed as $\nu $, called the \emph{grains},  and define the \emph{germ-grain process} $X_{i}=C_{i}+K_{i}$, $i=1,\dots ,n$. { An important feature of the model regarding Gaussian approximation is the radius
\begin{align*}
R_{i}:=\sup\{\|x\|:x\in K_{i}\}, 1\leqslant i\leqslant n.
\end{align*} }We consider the random closed set formed by the union of the grains translated by the germs
\begin{align*}
F_{n}=\left( \cup _{k=1}^{n}X_{k} \right)\cap E_{n}.
\end{align*}We are interested in the volume of $C_{n}$ covered by $F_{n}$
\begin{align*}
f_{V}(X_{1},\dots ,X_{n})=\Vol (F_{n}),
\end{align*} the number  of isolated grains 
\begin{align*}
f_{I}(X_{1},\dots ,X_{n})=\#\{k:  X_{k}\cap X_{j}\cap E_{n}=\emptyset, k\neq j \},
\end{align*} and their centred versions with unit variance $\tilde f_{V},\tilde f_{I}$.  The functional $f_{V}$ denotes the total volume of the germ-grain process, and $n^{-1}f_{V}(X_{1},\dots ,X_{n})$ can serve as an estimator for the \emph{fraction volume}, i.e. the portion of the space  occupied by the boolean model $\cup _{k}X_{k}$, and therefore be used in estimating the parameters of $\nu $ (see \cite{Mol97} for insights on the boolean model statistics).

Kolmogorov Berry-Essen bounds in $n^{-1/2}$ for binomial input  for $f_{V}$  or $f_{I}$ have only been obtained very recently in  \cite{GolPen10} with balls with deterministic identical radii (with the possibility to extend the method to a random radius), using size-biased couplings. Chatterjee \cite{Cha08} obtained similar bounds in Wasserstein distance. We present here the first such bounds in the unbounded random grain context. Furthermore, the computations are quite straightforward and the method is generalisable to similar local functionals of the boolean model, such as the perimeter, or other Minkowski functionals. The use of the bound \eqref{eq:graph-bound}  is crucial to have  a decay in $n^{-1/2}$ { in the context of random grains}.
The variance is a straightforward computation of integral geometry, it is a consequence of for instance \cite[Th. 4.4]{KenMol} that under the conditions of the theorem below, we have  $cn\leqslant \var f(X_{1},\dots ,X_{n}) \leqslant  Cn$ for some $c,C>0$, for $f=f_{V}$ or $f=f_{I}$.

\begin{theorem}
Assume that 
{
$\E R_{1}^{5d}<\infty .
 $} Let $N$ be a standard Gaussian variable.
Then we have for some $C>0$,
\begin{align*}
d_{K}&(\tilde f_{V}(X_{1},\dots ,X_{n}),N)\leqslant Cn^{-1/2}.
\end{align*}{If $\E R_{1}^{8d}<\infty $,} for some $C'>0$,
\begin{align*}
d_{K}&(\tilde f_{I}(X_{1},\dots ,X_{n}),N)\leqslant C'n^{-1/2}.
\end{align*}
\end{theorem}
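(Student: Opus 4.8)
The plan is to apply Theorem~\ref{thm:dependancy} (via Proposition~\ref{r:zee}) to the symmetric functionals $f=\tilde f_V$ and $f=\tilde f_I$, both of which enjoy a finite-range dependence property with respect to the germ--grain configuration. The first step is to observe that, since $\var f(X_1,\dots,X_n)\asymp n$ in both cases (by the integral-geometric computation cited from \cite[Th. 4.4]{KenMol}), the normalising factor is $\sigma\asymp\sqrt n$, so we need to bound each of the four terms on the right-hand side of \eqref{eq:graph-bound} and check that they all decay like $n^{-1/2}$. The key structural input is that both $\Vol(F_n)$ and the isolated-grain count are \emph{local}: $D_j f(X)$ vanishes unless $X_j$ meets some other grain, and $D_{1,j}f(X)$ vanishes unless $X_1$ and $X_j$ are within ``two hops'' of each other in the intersection graph of the grains. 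Translating these into the language of the difference operators $\Delta_j,\Delta_{1,j}$, one gets indicator factors $\mathbf 1_{\{X_1\cap X_2\neq\emptyset\}}$, etc.

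The core of the proof is a sequence of moment estimates, all of the same flavour. First, $\E|D_1 f(X)|^q\leqslant C_q$ uniformly in $n$: for the volume functional, $|D_1\varphi|\leqslant\Vol(X_1)\leqslant\kappa_d R_1^d$, so $\E|D_1 f_V(X)|^q\lesssim \E R_1^{qd}$, finite under the stated moment hypotheses ($q=3$ needs $\E R_1^{3d}<\infty$, and the term $\E|f(X)\,D_1 f(X^A)^3|\leqslant\sigma\sqrt{\E D_1 f(X)^6}$ needs $\E R_1^{6d}<\infty$ --- wait, one should double-check: the rough bound in Remark~\ref{r:zezz} gives $\E|f(X)\Delta_1 f(X)^2\Delta_1 f(X^A)|\leqslant\sigma\sqrt{\E\Delta_1 f(X)^6}$, so one needs the sixth moment of $\Delta_j f$, hence $\E R_1^{6d}<\infty$; combined with the dependency-graph counting losing a further factor, the stated $\E R_1^{5d}$ for $f_V$ and $\E R_1^{8d}$ for $f_I$ should be the honest thresholds once one tracks which term is binding). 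For the isolated-grain count, $D_1 f_I(X)$ is bounded by one plus the number of grains meeting $X_1$, whose $q$-th moment is controlled by $\E R_1^{qd}$ via a Mecke-type / conditioning argument. Second, $B_n(f)$ and $B_n'(f)$: by Proposition~\ref{r:zee} these are suprema over recombinations of $\E[\mathbf 1_{\{D_{1,2}f(Y)\neq0\}}D_1 f(Z)^2 D_2 f(Z')^2]$ and its two-indicator analogue; one bounds the $D\cdot$ factors by $\kappa_d R^d$ (or grain-degree) terms and the indicators by $\mathbf 1_{\{X_1\text{ meets }X_2\}}$, then integrates out the germ $C_1$ (uniform on the volume-$n$ cube) to gain a factor $n^{-1}$ per intersection constraint --- so $n B_n(f)=O(1)$ and $n^2 B_n'(f)=O(1)$, whence the first bracket in \eqref{eq:graph-bound} is $\frac{\sqrt n}{\sigma^2}O(\sqrt n)=O(1)\cdot n\cdot\sigma^{-2}=O(1)$. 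Third, the last two terms of \eqref{eq:graph-bound}: $\frac{n}{\sigma^4}\sup_A\E|f(X)\Delta_1 f(X^A)^3|\lesssim \frac{n}{n^2}\sigma\sqrt{\E R_1^{6d}}\cdot\frac1\sigma\lesssim n^{-1}\cdot n^{1/2}$ --- one needs to be a little careful, using $\sigma^{-4}\cdot n\cdot\sigma\cdot n^{-3/2}\cdot\text{const}=n^{-1}\cdot\text{const}$ after inserting $\sigma\asymp n^{1/2}$ and $\E|\Delta_1 f(X^A)|^6\lesssim1$; and $\frac{\sqrt{2\pi}}{16\sigma^3}n\E|\Delta_1 f(X)|^3\lesssim n^{-3/2}\cdot n\cdot\text{const}=n^{-1/2}$. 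Collecting, every term is $O(n^{-1/2})$.

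The main obstacle, and the step requiring the most care, is the \emph{uniformity in $n$ of the moment bounds on $\Delta_j f(X^A)$ and on the dependency-graph degrees} in the \emph{unbounded} grain setting. Because the germs $C_i$ live in a cube of growing volume $n$, one must verify that the localisation indicators genuinely contribute the claimed $n^{-1}$ factors --- i.e. that $\P(X_1\cap X_2\neq\emptyset\mid R_1,R_2)\lesssim (R_1+R_2)^d/n$ and, for the two-hop constraint, $\P(X_1,X_2,X_3\text{ mutually within range}\mid R_i)\lesssim (\max R_i)^{2d}/n^2$, after which Fubini over the radii leaves only mixed moments $\E R_1^{ad}R_2^{bd}$ that factor by independence into $\E R_1^{ad}\cdot\E R_1^{bd}$. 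Keeping track of exactly which product of moments appears in each of $B_n$, $B_n'$, $\E|D_1 f|^6$ and $\E|f(X)D_1 f(X^A)^3|$ is what pins down the sharp hypotheses $\E R_1^{5d}<\infty$ (volume) and $\E R_1^{8d}<\infty$ (isolated grains); for $f_I$ the extra powers arise because the indicator $\mathbf 1\{X_k\text{ isolated}\}$ already forces a grain-degree count, so each difference operator $D_j f_I$ carries an additional degree factor relative to $D_j f_V$. Modulo these bookkeeping estimates, the conclusion $d_K(\tilde f_\bullet(X_1,\dots,X_n),N)\leqslant Cn^{-1/2}$ follows directly from \eqref{eq:graph-bound}.
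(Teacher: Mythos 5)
Your overall strategy is the same as the paper's: apply \eqref{eq:graph-bound} through Proposition~\ref{r:zee}, use the locality of $D_j$ and $D_{i,j}$ for germ--grain functionals (each intersection constraint costs a factor $n^{-1}$ when the uniform germ is integrated over the volume-$n$ cube), and close with $\sigma^2\asymp n$. The $\coef$ and $\coeff$ reductions you sketch are the ones the paper carries out.

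There is, however, a genuine gap in the treatment of $\sup_{A}\E\,|f(X)\,\Delta_1 f(X^A)^3|$, and you flag it yourself without resolving it. If you control this term by the rough Cauchy--Schwarz bound from Remark~\ref{r:zezz}, namely $\E|f(X)\Delta_1 f(X^A)^3|\leqslant\sigma\sqrt{\E\,\Delta_1 f(X)^6}$, you need $\E R_1^{6d}<\infty$, which strictly exceeds the hypothesis $\E R_1^{5d}<\infty$. Your remark that ``the stated $\E R_1^{5d}$ for $f_V$ \dots\ should be the honest thresholds once one tracks which term is binding'' cannot be right as long as this term \emph{is} binding under that rough bound. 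The paper avoids the sixth moment by splitting $f(X)=f(\hat X^{j})+D_j f(X)$ and using that $f(\hat X^{j})$ is independent of $(X_j,X'_j)$, giving $\E|f(X)||D_j f(X^A)|^3\leqslant 2\,\E|f(\hat X^{j})|\cdot\E V_j^3+\E V_j^4$. This needs only $\E R_1^{4d}<\infty$, and the binding constraint moves to the $\coef$ estimate $\E\,\mathbf 1_{\{Y_1\cap Y_2\neq\emptyset\}}\Vol(Z_1)^4\leqslant n^{-1}\kappa_d^{5}\,\E\,r(Z_1)^{4d}(r(Y_1)+r(Y_2))^d$, whose worst recombination (e.g.\ $Z_1=Y_1$) produces exactly $\E R_1^{5d}$. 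Without the independence trick, your proof establishes a weaker theorem with $\E R_1^{6d}$ in place of $\E R_1^{5d}$.

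A second, smaller issue is the $f_I$ case, where ``a Mecke-type / conditioning argument'' is too thin a description. Since $|D_j f_I|$ is bounded by a grain degree and $\mathbf 1_{\{D_{1,2}f_I\neq 0\}}$ by a two-hop constraint, the expectations defining $\coef$, $\coeff$, and $\E|D_1 f_I|^6$ expand into sums over intersection patterns with up to eight edges on up to nine vertices. The paper supplies a tree-counting lemma, $p(t)\leqslant(d\kappa_d n^{-1})^{q-1}\,\E\,r(T_1)^{(q-1)d}$ for any intersection tree $t$ on $q$ vertices, and stratifies the index sums by the number of distinct indices $l$ so that a spanning tree with the right number of edges can always be extracted (yielding the compensating $n^{-l}$). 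Pushing $q-1$ up to $8$ is precisely what produces the hypothesis $\E R_1^{8d}<\infty$ for $f_I$; your outline does not identify this mechanism, and it is not obvious how a bare conditioning argument would recover the correct power of $n$ for each stratum.
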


\begin{proof}  Let first $f=f_{V}$. Given a $n$-tuple $x=(x_{1},\dots ,x_{n})\in  \K^{n}$, we have $D_{i,j}f(x)= 0$ as soon as
 $
 \Vol(x_{i}\cap x_{j})= 0,
 $
which gives us a sufficient condition. { Let us estimate the right hand side of (\ref{eq:graph-bound}). Introduce independent copies $X',\tilde X$ of $X$, and for $U$ a random compact set among those families, denote by $c(U)$, $ r(U), K(U)$ its centre, radius, and grain, so that $$\{c(X_{i}),c(X'_{i}),c(\tilde X_{i}),K(X_{i}),K(X'_{i}),K(\tilde X_{i}),1\leqslant i\leqslant n\}$$ is a family of independent variables. Let us write $V_{i}=\Vol(X_{i}),V_{i}'=\Vol(X_{i}')$. We have  $ | D_{1}f_{V}(X) | \leqslant V_{1}$, and  since  the volume has a finite moment of order $5$,   \begin{align*}
\sup_{n\geq 1} \E  | D_{1}f(X) | ^{3}  <\infty ,\;\sup_{n\geq 1} \E  | D_{1}f(X) | ^{4}  <\infty .
\end{align*}
We also have for $A\subseteq [n]$
\begin{align*}
\E  | f(X) | | D _{j}f(X^{A}) |^{3}&\leqslant \E  | f(X^{\hat j}) D _{j}f(X^{A}) | ^{3}+\E  | D_{j}f(X)D_{j}f(X^{A})^{3} |\\
&\leqslant      \E  | f(X^{\hat j}) | (V_{j}^{3}+(V_{j}')^{3})+\E D_{j}f(X)^{4}\\
&\leqslant \E  | f(X^{\hat j}) |2\E V_{j}^{3}+\E V_{j}(X)^{4}, 
\end{align*}whence 
\begin{align*}
\sigma ^{-4}n\E  | f(X)   D _{j}f(X^{A})   ^{3}  | \leqslant Cn^{-1/2}
\end{align*}for some $C>0$.

  To estimate $\coef,\coeff$, we use {\blu Proposition} \ref{r:zee}, \eqref{eq:coeff-CS}, and \eqref{eq:coef-CS}. Fix $Y,Y',Z$ recombinations of $\{X,X',\tilde X\}$,  we have
\begin{align*}
 \E \mathbf{1}_{\{D_{1,2}f  (Y)\neq 0 \}}D_{1}f (Z)^{4}&\leqslant \E\mathbf{1}_{\{Y_{2}\cap Y_{1}\neq \emptyset\}} \Vol(Z_{1})^{4}\\
&\leqslant \E\left[  \kappa _{d}^{4}r(Z_{1})^{4d}\P(c(Y_{2})\in B(c(Y_{1}),r(Y_{1})+r(Y_{2})) |  Y_{1},Z_{1} ,r(Y_{2})) \right]\\
&\leqslant n^{-1}\kappa _{d}^{5}\E r(Z_{1})^{4d}(r(Y_{1})+r(Y_{2}))^{d} 
\end{align*}whence $\sup_{n}n\coef<\infty $ since $\E R_{1}^{5d}<\infty $.

Then,
\begin{align*}
\E &\mathbf{1}_{\{D_{1,2}f  (Y)\neq 0,D_{1,3}f  (Y')\neq 0 \}}D_{2}f (Z)^{4}\\
&\leqslant \E \left[ \Vol(Z_{2})^{4}\mathbf{1}_{\{D_{12}f(Y)\neq 0\}}\P(c(Y'_{3})\in B(c(Y'_{1}),r(Y'_{1})+r(Y'_{3})) |  Z_{2},Y_{1},Y_{2},Y'_{1} ,r(Y'_{3})) \right]\\
&\leqslant n^{-1}\kappa _{d}^{5}\E\left[  r(Z_{2})^{4}(r(Y'_{1})+r(Y'_{3}))^{d}\P(c(Y_{2})\in B(c(Y_{1}),r(Y_{1})+r(Y_{2})) |  Z_{2},Y_{1},Y'_{1},Y'_{3},r(Y_{2}) ) \right]\\
&\leqslant n^{-2}\kappa _{d}^{6}\E r(Z_{2})^{4}(r(Y'_{1})+r(Y'_{3}))^{d}(r(Y_{1})+r(Y_{2}))^{d}.
\end{align*}Using the definition of recombinations, the variables $Y_{1}',Z_{2},Y'_{3}$ are pairwise independent,  and the expectation above is finite because of $\E r(X_{1})^{5d}<\infty $.
We indeed have $\sup_{n} n^{2}\coeff<\infty $, which concludes the proof for the Kolmogorov bound on $\tilde f_{V}$. \\

Dealing with $f=f_{I}$ is slightly more complicated. Introduce $d_{i,j}(X)$ the distance between $i$ and $j$ in the germ-grain process $X$, defined as the smallest number $q$ such that there is a chain $i_{1}=i,\dots ,i_{q}=j$ such that $X_{i_{k}}\cap {X_{i_{k+1}}}\neq \emptyset $. Call $B_{i}^{p}(X)$ the number of points at distance $\leqslant p$ from the point $i$ for the distance $d_{\cdot ,\cdot }(X)$. { For some $1\leqslant i,j\leqslant n,$ the value of the functional 
\begin{align*}
\mathbf{1}_{\{X_{j}\text{ is isolated}\}}:=\mathbf{1}_{\{X_{j}\cap X_{k}\cap E_{n}=\emptyset ,k\neq j\}}
\end{align*}can be affected by the removal of $X_{i}$ only if $X_{i}\cap X_{j}\neq \emptyset $, therefore, for $1\leqslant i\leqslant n$,
\begin{align*}
 | D_{i}f _{I}(X) | \leqslant \# B_{i}^{1}(X),
\end{align*}}whence,
\begin{align}
\label{eq:bound-4thderivative-isolated}
\E | D_{1}f_{I}(X)  | ^{q}&\leqslant \E\# B_{i}^{1}(X)^{q}, q\leqslant 1.
\end{align}We will estimate this bound later.
With the same notation than for the functional $f_{V}$, let us now deal with $\coef,\coeff$. Remark that $D_{i,j}f_{I}(X)=0$ if $d_{i,j}(X)>2$. 
 We have 
\begin{align*}
\coef&\leqslant \sup_{(Y,Z)} \E \mathbf{1}_{\{2\in B_{1}^{2}(Y)\}}\#B_{1}^{1}(Z)^{4}
\end{align*}{  and 
\begin{align*}
\mathbf{1}_{\{2\in B_{1}^{2}(Y)\}}\leqslant \sum_{k}\mathbf{1}_{\{X_{1}\cap X_{k}\neq \emptyset ,X_{2}\cap X_{k}\neq \emptyset \}}.
\end{align*}}
To simplify notation, remark that for $Y,Z$ recombinations of $\{X,X',\tilde X\}$, $\#B_{1}^{p}(Y)\leqslant \# B_{1}^{p}(T)$, where $T$ is the concatenation of $Y$ and $Z$ and is in fact composed of $m$ iid variables distributed as $X_{1}$, where $n\leqslant m\leqslant 2n$. We then have
\renewcommand{\k}{\mathbf{k}}
\begin{align}
\label{eq:bound-coef-isolated}
\coef&\leqslant\sup_{n\leqslant m\leqslant 2n}\E  \sum_{k=1}^{m}\mathbf{1}_{\{T_{1}\cap T_{k}\neq \emptyset ,T_{k}\cap T_{2}\neq \emptyset \}}\sum_{1\leqslant k_{1},k_{2},k_{3},k_{4}\leqslant m}\mathbf{1}_{\{T_{k_{i}}\cap T_{1}\neq \emptyset,i=1,\dots ,4 \}},
\end{align} and the supremum is reached for $m=2n$.
We have similarly, with $m=3n$,
\begin{align}
\label{eq:bound-coeff-isolated}
\coeff &\leqslant  \E \sum_{k=1}^{m}\mathbf{1}_{\{T_{1}\cap T_{k}\neq \emptyset ,T_{2}\cap T_{k}\neq \emptyset \}}  \sum_{k'=1}^{m}\mathbf{1}_{\{T_{1}\cap T_{k'}\neq \emptyset ,T_{3}\cap T_{k'}\neq \emptyset \}}  \sum_{\mathbf{k}=(k_{1},k_{2},k_{3},k_{4})\in [m]^{4}}\mathbf{1}_{\{T_{1}\cap T_{k_{i}}\neq \emptyset \}}.
\end{align}
To estimate   \eqref{eq:bound-4thderivative-isolated}-\eqref{eq:bound-coeff-isolated}, it is useful to introduce some more notation. Call graph on $[n]$ the finite data of distinct edges $t=\{\{i_{1},j_{1}\},\dots ,\{i_{q},j_{q}\}\}$. For such a graph, introduce the probability
\begin{align*}
p(t)=\P(T_{i_{1}}\cap T_{j_{1}}\neq \emptyset ,\dots ,T_{i_{q}}\cap T_{j_{q}}{\rosso \neq }\emptyset ).
\end{align*}
Say that this graph is a tree when it is connected and  has no cycles.
Let us prove that  for every tree $t$ with $q$ distinct vertices,  
\begin{align}
\label{eq:grauph-bound-isolated}
p(t)\leqslant  (d\kappa _{d}n^{-1})^{q-1}\E r(T_{1})^{(q-1)d}.
\end{align}

Let $t$ be such a tree, and let an arbitrary vertex $i_{0}$ of $t$, designated to be the root of $t$.  Call $\mathcal{G}_{k}(t),k\geq 1,$ the members of the $k$-th generation, noticing that there can not be more than $q$ generations, i.e. $\mathcal{G}_{k}(t)=\emptyset $ for $k>q$. Call $\mathcal{G}_{k}^{-}(t)=\cup _{j<k}\mathcal{G}_{j}(t),\mathcal{G}_{k}^{+}(t)=\mathcal{G}_{k}(t)\setminus \mathcal{G}_{k}^{-}(t)$, and call $\mathcal{G}_{k}^{k+1}(t)$ the { collection of all pairs} $(i,j)$ such that $i\in \mathcal{G}_{k}(t),j\in \mathcal{G}_{k+1}(t),\{i,j\}\in t$.
We have 
\begin{align*}
p(t)&\leqslant \E\left[ \mathbf{1}_{\{T_{i}\cap T_{j}\neq \emptyset ;\{i,j\}\in t;i,j\in\mathcal{G}_{q}^{-}(t) \}}\right.\\
&\hspace{1.5cm}\left. \P\left(c(T_{j})\in B(c(T_{i}),r(T_{i})+r(T_{j}));(i ,j)\in \mathcal{G}^{q}_{q-1}(t) \;\Big|\;  c(T_{i}),i\in \mathcal{G}_{q}^{-}(t){\red ;}\;r(T_{i}),i\in [m]\right) \right]\\
&\leqslant \E\left[ \mathbf{1}_{\{T_{i}\cap T_{j}\neq \emptyset ;\{i,j\}\in t,i,j\in\mathcal{G}_{q}^{-}(t) \}}  \prod_{(i,j)\in \mathcal{G}_{q-1}^{q}(t)}n^{-1}\kappa _{d}(r(T_{i}{\rosso )}+r(T_{j}))^{d}\right]\\
&\leqslant (\kappa _{d}n^{-1})^{\#\mathcal{G}_{q-1}^{q}(t)}\E\left[   \mathbf{1}_{\{T_{i}\cap T_{j}\neq \emptyset ;\{i,j\}\in t,i,j\in\mathcal{G}_{q}^{-}(t) \}}\prod_{ {\rosso\{i,j\}}\in t:i,j\in  \mathcal{G}_{q}^{+}(t)}(r(T_{i})+r(T_{i}))^{d}
 \right].\end{align*}
  Applying this procedure inductively back until  the $1$-st generation , that is the root $i_{0}$ of the tree, yields
  
\begin{align*}
p(t)&\leqslant (\kappa _{d}n^{-1})^{\sum_{k\geq 1}\#\mathcal{G}_{k}^{k+1}(t)}\E \left[ \prod_{\ (i,j)\in\cup _{k} \mathcal{G}_{k}^{k+1}(t)}(r(T_{i})+r(T_{j}))^{d} \right].
\end{align*}Now, $\cup _{k\geq 1}\mathcal{G}_{k}^{k+1}(t)$, contains all the ${\rosso  q-1}$ edges of $t$, whence 
\begin{align*}
p(t)\leqslant \kappa _{d}^{q-1}n^{-(q-1)}\E \prod_{\{i,j\}\in t}(r(T_{i})+r(T_{j}))^{d}\leqslant (d\kappa _{d}n^{-1})^{q-1} \E r(T_{1})^{(q-1)d}, 
\end{align*}by using Cauchy-Schwarz inequality, whence \eqref{eq:grauph-bound-isolated} follows.
  
  We have 
\begin{align*}
\E  | D_{1}f_{I}(X) | ^{6}\leqslant \sum_{\k=(k_{1},\dots ,k_{6})\in [m]^{6}}p(\{1,k_{i}\},i=1,\dots ,6)\leqslant C n^{-5}
\end{align*} for some $C>0$, by using $\E r(X_{1})^{5d}<\infty $, which treats all the terms of \eqref{eq:graph-bound} except the ones containing $\coef$ and $\coeff$.

  We call, for $u_{1},\dots ,u_{q}$ distinct integers, $l\geq 0,p\geq 4$,  
  \begin{align*}
[m]_{u_{1},\dots ,u_{q};l}^{p}=\{\k=(k_{1},\dots ,k_{p})\in [m]^{p}\;:\;\#\{u_{1},\dots ,u_{q},k_{1},\dots ,k_{p}\}\}=q+l.
\end{align*}We can easily prove that there are constants $C_{l}$ {\rosso not depending on $m$} such that
\begin{align}
\label{eq:ml-estimates}
 \#[m]^{p}_{u_{1},\dots ,u_{q};l}\leqslant C_{l}n^{l}.
\end{align}
We have, for $T$ with $2n$ iid components, using \eqref{eq:bound-coef-isolated},
\begin{align*}
\coef& \leqslant \sum_{k=1}^{n}\sum_{\k=(k_{i})\in [2n]^{4}}p(\{1,k\},\{2,k\},\{1,k_{i}\};i=1,\dots ,4)\\
& \leqslant   \sum_{l=0}^{5}\sum_{\k\in [m]^{5}_{1,2;l}}p(\{1,k_{1}\},\{2,k_{1}\},\{1,k_{i}\};i=2,\dots ,5).
\end{align*}For $\k\in [m]^{5}_{1,2; l}$,  one can easily extract a tree with $l+1$ edges from $\{\{1,k_{1}\},\{2,k_{1}\},\{1,k_{i}\};i=2,\dots ,5\}$, whence \eqref{eq:grauph-bound-isolated}  yields 
\begin{align*}
\coef\leqslant C \sum_{l=0}^{5}\sum_{\k\in [m]^{5}_{1,2;l}}n^{-l-1}\leqslant C'n^{-1},
\end{align*} using also \eqref{eq:ml-estimates}. This gives ${\rosso \sup_{n}n\coef<\infty }$. Similar computations yield
\begin{align*}
\coeff &\leqslant \E \sum_{k}\mathbf{1}_{\{T_{1}\cap T_{k}\neq \emptyset ,T_{2}\cap T_{k}\neq \emptyset \}}  \sum_{k'}\mathbf{1}_{\{T_{1}\cap T_{k'}\neq \emptyset ,T_{3}\cap T_{k'}\neq \emptyset \}}  \sum_{\mathbf{k}=(k_{1},k_{2},k_{3},k_{4})\in [m]^{4}}\mathbf{1}_{\{T_{1}\cap T_{k_{i}}\neq \emptyset \}}\\
 &\leqslant \sum_{ \mathbf{k}=(k_{i})\in [m]^{6}}p(\{1,k_{1}\},\{2,k_{1}\},\{1,k_{2}\},\{3,k_{2}\},\{1,k_{i}\}, i=3,\dots ,6) \\ 
    &= \sum_{l=0}^{6} \sum_{ \mathbf{k}=(k_{i})\in [m]_{1,2,3;l}^{6}}p(\{2,k_{1}\}, \{3,k_{2}\},\{1,k_{i}\}, i=1,\dots ,6) 
\end{align*}
and for $\mathbf{k}\in [m]^{6}_{1,2,3;l}$ one can extract a tree with $l+2$ edges from $\{\{2,k_{1}\},\{3,k_{2}\},\{1,k_{i}\};i=1...6\}$, whence
\begin{align*} B_{n}'(f)\leqslant \sum_{l=0}^{6} \sum_{\k\in [m]^{6}_{1,2,3;l}}(\kappa _{d}dn^{-1})^{l+2}
\leqslant Cn^{-2},
\end{align*}
which concludes the proof.

   }
\end{proof}
 
\subsection{Further applications} 
 
 It is proved in \cite{Cha08} that, in the notation of
  Theorem \ref{thm:SchulteSchatterjeebound} and for $\sigma =1$,
\begin{align}
\label{eq:wasserstein-stein}d_{W}(W,N) &\leqslant  \delta _{1}+\delta _{2}\\  \delta _{1}:&=\sqrt{\var(\E(T | X))}\\
\delta _{2}:&=2c\sum_{j=1}^{n}\E  | \Delta _{j}f(X) | ^{3}
\end{align}where $d_W$ is the 1-Wasserstein distance. This bound has been successfully applied in \cite{Cha08}, \cite{Cha13}, and \cite{Nol} to several normal approximation problems. Without fully developing the details, we indicate here how we can obtain similar bounds in the Kolmogorov's distance by using the techniques developed in this paper.  Assuming that $\sigma =1$, the new terms in (\ref{eq:abstract-bound}) with respect to (\ref{eq:wasserstein-stein}) are 
\begin{align*}
 \delta _{1}'&=\sqrt{\var(\E(T' | X))}\\ \delta _{2}'&=6  \sum_{j=1}^{n}\sqrt{\E  |D_{j}f(X) | ^{6}}.
\end{align*} The term $\delta _{1}'$ is very close in its expression to $\delta _{1} $.  In the  examples developed below, it is indeed possible to apply the bound  already derived for $\delta _{1} $ to $\delta _{1}'$.  The term $\delta _{2}'$ has to be dealt with separately, it is in general more straightforward. Remark that $\delta _{2}'$ can be replaced by the bound $\delta _{2}''=\sup_{A}\sum_{j=1}^{n}\E  | f(X)D _{j}f(X^{A})^{3} | $ from \eqref{eq:abstract-intermed-bound}, which can give a better convergence rate or less restrictive hypotheses, but it requires a specific analysis and we do not develop it below.


\medskip
 
\emph{Nearest neighbours statistics.} Let $k\geq 1,i\geq 1$,  let $\psi :(\mathbb{R}^{d})^{k}\to \mathbb{R}$ be a measurable function and let 
\begin{align*}
f(x_{1},\dots ,x_{n}):=\frac{1}{\sqrt{ n}}\sum_{i=1}^{n}\psi (x_{i}^{(1)},\dots ,x_{i}^{(k)})
\end{align*} where the $x_{i}^{(j)}$ are the $k$ nearest neighbours of $x_{i}$ among $(x_{1},\dots ,x_{n})$ for the Euclidean distance, ordered by increasing distance to $x_{i}$, with an arbitrary tie breaking rule.
Given $n$ i.i.d random variables $X_{1},\dots ,X_{n}$ in $\mathbb{R}^{d}$, in \cite{Cha08} Chatterjee obtains estimates on the Wasserstein distance between $f(X)$ and the normal law under the assumptions that for $i\neq j$, $\|X_{i}-X_{j}\|$ is a continuous random variable. He obtains the bounds, for $p\geq 8$, 
\begin{align*}
\delta _{1}\leqslant C_{d}\frac{k^{4}\gamma _{p}^{2 }}{\sigma ^{2}n^{(p-8)/2p}},\\
\delta _{2}\leqslant C_{d}\frac{k^{3}\gamma _{p}^{3 }}{\sigma ^{3}n^{(p-6)/2p}},
\end{align*}
where $\gamma _{p}:= \left( \E | \psi (X_{1},\dots ,X_{n}) |^{p} \right)^{1/p}, C_{d}>0 $.
These bounds are obtained through \cite[Theorem 2.5]{Cha08}, which is similar to   Theorem \ref{thm:dependancy}, where our bound on $\delta _{1}'$ is already smaller or equal to the bound on $\delta _{1}$ from \cite[Theorem 2.5]{Cha08}, up to a constant, see Remark \ref{r:zezz}. Therefore we have $\delta _{1}'\leqslant C \delta _{1}$. In order to obtain an explicit bound on the Kolmogorov distance, it therefore only remains to bound $\delta _{2}'$.
 In \cite{Cha08} it is shown that $ \E \sup_{j=1}^{n} | \Delta _{j}f(X) | ^{p}\leqslant (n^{2}+n)n^{-p/2}\gamma _{p}^{p}  $
 from where the bounds 
\begin{align*}
\delta _{1}' &\leqslant C_{k,d}n^{1/2}\left( \E  \sup_{j=1}^{n} | \Delta _{j}f(X) | ^{p} \right)^{2/p}\leqslant C_{k,d}n^{4/p}n^{1/2}n^{-1}\gamma _{p}^{2}=C_{k,d}\frac{\gamma _{p}^{2}}{n^{( p-8)/2p}}\\
\delta _{2}&\leqslant C_{k,d}n\left( \E  \sup_{j=1}^{n} | \Delta _{j}f(X) | ^{p} \right)^{3/p}\leqslant C_{k,d}\frac{\gamma _{p}^{3}}{n^{1/2-6/p}} \\
\delta _{2}'&\leqslant C_{k,d}n\left( \E \sup_{j=1}^{n} | \Delta _{j}f(X) |^{p}  \right)^{3/p}\leqslant \delta _{2}.
\end{align*}easily follow. {We observe that in \cite{Cha08} a more general situation is actually considered : for each $i$, a different functional $\psi _{i}$ is applied to $(x_{i}^{(1)},\dots ,x_{i}^{(k)})$ in the definition of $f$. However, all the explicit examples developed in such reference are purely geometric, in the sense that this subtlety is not exploited, and the functional $f (X)$ is symmetric. These examples includes the  \emph{average distance to the nearest neighbour}, the \emph{degree count in the nearest-neighbour graph,} and the\emph{Levina-Bickel statistic with parameter $k$}, which is defined by 
\begin{align*}
f(x_{1},\dots ,x_{l})=\frac{1}{n}\sum_{i=1}^{n}\left( \frac{1}{k-1}\sum_{j=1}^{k-1}\log\left( \frac{\|x_{i}-x_{i}^{(k)}\|}{\|x_{i}-x_{i}^{(j)}\|} \right) \right).
\end{align*} }

\medskip   
   
\noindent \textit{Flux through a random conductor}. In \cite{Nol}, Nolen considers the solution of an elliptic partial differential equation with a stationary random conductivity coefficient $a(x)$ over the torus $[0,L)^{d}, L>0.$ The random function $a(x)$ depends on the local contributions of a set of i.i.d variables $Z=(Z_{1},\dots ,Z_{k})$ indexed by $\mathbb{Z} ^{d}\cap [0,L)^{d}$. He derives a bound on the Wasserstein distance between the normal law and the average flux $\Gamma (Z)$ of the solution. He obtains the bounds 
\begin{align}
\label{eq:bound-wass-flux}
 \delta _{1}&\leqslant CL^{-3d/2}\sigma ^{-2}\log(L)\left( \E \Phi _{0}^{8q} \right)^{1/2q},\\
\delta_{2}&\leqslant C\sigma ^{-3}L^{-2d}\E \Phi _{0}^{6},
\end{align}where $\sigma ^{2}$ is the variance and $\Phi _{0}$ is an integral related to the gradient of the solution over $[0,1)^{d}$ (see \cite{Nol} for details). 

{ Our method allows one to extend this result to the Kolmogorov distance, under slightly stronger assumptions. Gloria and Nolen \cite{gn} have also used Theorem 4.2 for a Kolmogorov Berry-Essen bound with a discretised version of the problem.}
Once again, the simple inequality $ |  | a | - | b |  | \leqslant  | a-b | ,a,b\in \mathbb{R},$
yields that the upper bound on $\var(T(Z,Z') | Z')$ derived in \cite[(2.25)-(2.27)]{Nol} and then used in (4.53) can be used in an exact similar fashion to bound $\var(T'(Z,Z') | Z')$ where $T'$ is defined as in our Theorem \ref{thm:SchulteSchatterjeebound}. This yields that $\delta _{1}'$ satisfies the same bound as $\delta _{1}$, up to a constant. Then, \cite[Lemma 4.1]{Nol} provides the estimate 
\begin{align*}
\E  | \Delta _{j}\Gamma (Z) | ^{q}\leqslant C_{q}L^{-qd}\E  | \Phi _{0}(Z) | ^{2q}
\end{align*}
 which readily yields the first term of \ref{eq:bound-wass-flux}, and the bound on the Kolmogorov distance
\begin{align*}
\delta _{1}+\delta _{2}+\delta _{1}'+\delta _{2}'\leqslant C(\delta _{1}+L^{-2d}\sqrt{\E  | \Phi _{0} | ^{12}}).
\end{align*}Note that the new condition $\E  | \Phi _{0} | ^{12}<\infty $ might be weakened if one uses (\ref{eq:abstract-intermed-bound}) instead of (\ref{eq:abstract-bound}), as it is done  in the proof of Theorem \ref{thm:voronoi-tcl}.\\

\bibliographystyle{plain}

\bibliography{berryessen.bib}
\end{document}